
\documentclass[11pt,twoside]{amsart}

\usepackage{amssymb,latexsym,amscd,amsmath, amsthm, bold-extra, enumitem, hyperref, dsfont, mathtools}
\usepackage{MnSymbol}
\usepackage[most]{tcolorbox}
\usepackage{pgf,tikz,pgfplots}
\pgfplotsset{compat=1.10}
\usepackage{mathrsfs}
\usetikzlibrary{arrows}

\usepackage{graphicx}
\usepackage[lofdepth,lotdepth]{subfig}


\textwidth=16.00cm
\textheight=22.00cm
\topmargin=0.00cm
\oddsidemargin=0.00cm
\evensidemargin=0.00cm
\headheight=0cm
\headsep=1cm
\headsep=0.5cm
\numberwithin{equation}{section}
\hyphenation{semi-stable}
\setlength{\parskip}{3pt}

\usepackage{tikz}

\newtheorem{theorem}{Theorem}[section]
\newtheorem{lemma}[theorem]{Lemma}
\newtheorem{proposition}[theorem]{Proposition}
\newtheorem{corollary}[theorem]{Corollary}
\newtheorem{conjecture}[theorem]{Conjecture}

\newtheorem{question}[theorem]{Question}

\theoremstyle{definition}
\newtheorem{definition}[theorem]{Definition}

\newtheorem{remark}[theorem]{Remark}
\newtheorem{example}[theorem]{Example}

\numberwithin{equation}{section}

\usepackage{pgf,tikz,pgfplots}
\pgfplotsset{compat=1.15}
\usepackage{mathrsfs}
\usetikzlibrary{arrows}

\linespread{1.1}

\title[Rational normal curves and Hadamard products]{Rational normal curves and Hadamard products}
\thanks{Last updated: \today}
\keywords{Complete intersection, Hadamard product, star configuration, Gorenstein}
\subjclass[2020]{13C40, 13C70, 14M10,  14M99, 14N20}

\author[E. Carlini]{Enrico Carlini}
\address[E. Carlini]{DISMA-Department of Mathematical Sciences \\
	Politecnico di Torino, Turin, Italy}
\email{enrico.carlini@polito.it}

\author[M. V. Catalisano]{Maria Virginia Catalisano}
\address[M. V. Catalisano]{Dipartimento di Ingegneria Meccanica, Energetica, Gestionale e dei
	Trasporti, Universit\`a degli studi di Genova, Genoa, Italy}
\email{catalisano@dime.unige.it}

\author[G. Favacchio]{Giuseppe Favacchio}
\address[G. Favacchio]{DISMA-Department of Mathematical Sciences \\
	Politecnico di Torino, Turin, Italy}
\email{giuseppe.favacchio@polito.it}

\author[E. Guardo]{Elena Guardo}
\address[E. Guardo]{Dipartimento di Matematica e Informatica\\
	Universit\`a degli studi di Catania\\
    Catania, Italy}
\email{guardo@dmi.unict.it}

\begin{document}

\begin{abstract}
Given $r>n$ general hyperplanes in $\mathbb P^n,$ a star configuration of points is the set of all the $n$-wise intersection of them.  We introduce {\it contact star configurations}, which are star configurations where all the hyperplanes are osculating to the same rational normal curve. In this paper we find a relation between this construction and Hadamard products of linear varieties. Moreover, we study the union of contact star configurations on a same conic in $\mathbb P^2$, we prove that the union of two contact star configurations has a special $h$-vector and, in some cases, this is a complete intersection.
\end{abstract}
\maketitle


	\section{Introduction}

	We say that the hyperplanes in a set $\mathcal{L}=\{\ell_1,\ldots,\ell_r\}\subseteq \mathbb P^n$, $r\ge n$,  {\it meet properly }if  $\ell_{i_1}\cap\cdots\cap \ell_{i_n}$ is a point for any choice of $n$ different indices and  $n+1$ hyperplanes are never concurrent. 
	We denote $\ell_{i_1}\cap\cdots\cap \ell_{i_n}$ by $P_{i_1,\ldots,i_n}$. 
	
	Let $\mathcal{L}=\{\ell_1,\ldots,\ell_r\}\subseteq \mathbb P^n$ be a set of $r\ge n$ hyperplanes meeting properly. The set of points
	\[\mathbb{S}(\mathcal{L})=\bigcup_{1 \le i_1<\ldots< i_n\le r}
	P_{i_1,\ldots, i_n}\subseteq\mathbb P^n.\]
	is called a {\it star configuration} of points in $\mathbb P^n$ defined by $\mathcal L.$
	
	These configurations of points, and their generalizations,  have been intensively studied for their algebraic and geometrical properties,
	see \cite{AS2012, BH2010comparing, CGVT2015, GHM2013, PS2015} 
	  for a partial list of papers that have contributed to our understanding them.

	Set  $S = \mathbb{C}[x_0,\ldots, x_n]= \mathbb{C}[\mathbb P^n]$, where $\mathbb{C}$ could be replaced by any algebraically closed field of characteristic zero.
	We recall that the Hilbert function of a set of points $X\subseteq \mathbb P^n$ is the numerical function $H_X: \mathbb Z_{\ge 0} \to \mathbb Z_{\ge 0}$ defined by
	\[H_X (t)= \dim S_t - \dim (I_X )_t,\]
	where $I_X$ is the ideal defining $X,$ and the $h$-vector of a set of points $X\subseteq \mathbb P^2$ is the first difference of the Hilbert function of $X$, that is
	\[
	h_X(t) = H_X(t)-H_X(t-1), 
	\]
	where we set $H_X(-1) = 0.$
	
	A star configuration $\mathbb{S}(\mathcal{L})$ defined by a set  of $r$ hyperplanes consists of $\binom{r}{n}$ points, and its $h$-vector is generic, see for instance \cite[Theorem 2.6]{GHM2013}, that means $h_{\mathbb{S}(\mathcal{L})}=\left(1,\ldots,\binom{n-1+i}{n-1}, \ldots, \binom{r-1}{n-1}\right).$ %
	Indeed, the ideal defining $\mathbb{S}(\mathcal{L})$ is minimally generated in degree $r-1$ by all the products of $r-1$ linear forms defining the hyperplanes in $\mathcal{L}$. 
	
	We now construct star configurations starting from a rational normal curve $\gamma$ of $\mathbb P^n$. We call them {\it contact star configurations on $\gamma$}, we will not mention $\gamma$ if it is clear from the context. 	
	
	\begin{definition}\label{d.contact star}
		Let $P_1, \ldots, P_r\subseteq \mathbb P^n$ be distinct points on a rational normal curve $\gamma$ of $\mathbb P^n$. Denoted by $\mathcal L=\{\ell_1, \ldots, \ell_r\}$ the set of osculating hyperplanes to $\gamma$ at $P_1, \ldots, P_r$ respectively. We say that $\mathbb{S}(\mathcal L)$ is a {\it  contact star configuration} on $\gamma$.
	\end{definition}
	Note that, since $\gamma$ is a rational normal curve, the hyperplanes in $\mathcal L$ always meet properly.
	
	The first motivation to introduce these configurations come from Hadamard products. We show in Section \ref{s. hadamard}  that the so called {\it Hadamard star configurations} are indeed contact star configurations, see Theorem \ref{t. hadamard construction}. This result will give an easy way to explicitly construct examples which only make use of rational points, see in Remark \ref{r. c i had}.
	
	The second motivation is related to their $h$-vector. A single contact star configuration has a generic $h$-vector, as any other star configuration. But the behavior of a union of two or more of them deserves further investigation. The homological invariants of a set of points union of star configurations have been studied for instance in \cite{AS2012, shin2012hilbert, shin2013some}. In the known cases, that require some restrictive assumptions, the $h$-vector of such a union is always general.

	We will mostly focus on  $\mathbb P^2$, therefore the contact star configurations are defined by taking lines tangent to an irreducible conic. The study of properties of families of lines tangent to a planar conic is  classical in algebraic geometry, see for instance the Cremona's book~\cite{cremona1885}.


We prove, in Section \ref{s. 2 contact},   that the union of two contact star configurations  in $\mathbb P^2$, defined by of $r$ and $s$ lines is a complete intersection of type $(r-1,s)$  if either $s=r-1$ or $s=r$, see Theorem \ref{t.main}.  We also show that, in these cases the curve of degree $s$ can be chosen irreducible.  

Moreover, in Section \ref{s.other unions}, we prove that the union of contact star configurations in $\mathbb P^2$ defined $r$ and $s$ lines has the same $h$-vector of two fat points of multiplicities $r-1$ and $s-1$, see Theorem \ref{t. h-vector 2 star }. 	
We believe that this correspondence with the $h$-vector of certain scheme of fat points also occurs for a union of three and four contact stars, see Conjecture \ref{conj. 3,4 star}. We prove it in some cases, see Theorem \ref{t. 3 fat 3 had}. 	

In Section \ref{s.applications}, we apply Theorem \ref{t.main} to the study of a recurring topic in classical projective geometry:  polygons circumscribed around an irreducible conic in $\mathbb P^2$,  see Proposition \ref{p.brianchonlike}, Corollary \ref{c.conic8points} and Proposition \ref{p.3conics}.

Section \ref{s. future} contains concluding remarks and conjectures for further work.

We will make use of standard tools from the linkage theory, see \cite{MN20} for an overview in the topic and \cite{FGM2018, FM2019, GHM2013,  kreuzer2019, MN3}  for a partial list of papers which use liaison to study zero-dimensional projective and mutiprojective schemes. 
A well know result, see \cite[Corollary 5.2.19]{MiglioreBook}, relates the $h$-vectors of two  arithmetically Cohen-Macaulay schemes in $\mathbb P^n$ with the same codimension, that are linked by an  arithmetically Gorenstein. In particular, if $X,Y$ are two disjoint set of reduced points in $\mathbb P^2$ and $X\cup Y$ is a complete intersection of type $(a,b)$, then the following formula connects the $h$-vectors of $X$, $Y$ and $X\cup Y$ 
\begin{equation}\label{eq.Corollary 5.2.19}
h_{X\cup Y}(t)=h_X(t)+h_Y(a+b-2-t), \ \ \text{ for any integer } t.
\end{equation}
Since the $h$-vector of a complete intersection is well known, from the formula above, the knowledge of the $h$-vector of $X$ allows to compute the one of $Y$.

\noindent{\bf Acknowledgments.} Favacchio and Guardo  have been supported by Università degli Studi di Catania, piano della ricerca PIACERI 2020/22 linea intervento 2.
 All the authors have been supported by the National Group for Algebraic and Geometrical Structures and their Applications (GNSAGA-INdAM).  
This work was partially supported by MIUR grant Dipartimenti di Eccellenza 2018-2022 (E11G18000350001).
 Our results were inspired by calculations with CoCoA~\cite{CoCoA}.

\section{Hadamard products}\label{s. hadamard}

In this section we show that Hadamard star configurations are contact star configurations. 
Hadamard products of linear spaces have been recently subject of study for many interesting properties, see for instance  \cite{BCK2016, CCGVT2019, CGVT2015}.
We briefly recall some general fact about Hadamard products of linear spaces. 
Let $P = [a_0:\cdots:a_n]$ and $Q = [b_0:\cdots:b_n]$ be two
points in $\mathbb{P}^n$.  If for some $i$ we have both $a_i\neq 0$ and $b_i\neq 0$, then we say that the {\it Hadamard product} of $P$ and $Q$, 
denoted $P \star Q$, is defined and we set
\[P \star Q = [a_0b_0: \cdots : a_nb_n]\in \mathbb{P}^n.\]

Given two varieties $X$ and $Y$ in $\mathbb{P}^n$, with respect to the Zariski topology,  the {\it Hadamard product} of $X$ and $Y$, denoted $X \star Y$, is given by 
\[X \star Y = \overline{\{ P \star Q\ |\ P \in X, Q \in Y, \ \ \mbox{and} \ \
	P \star Q\ \mbox{is defined}\}}\subseteq \mathbb{P}^n.\]	

In particular, for a variety $X$ in $\mathbb P^n$ and a positive integer $r\ge 2$,  the $r$-th Hadamard power of $X$ is 
$$X^{\star r}=X^{\star (r-1)}\star X,$$ where we define $X^{\star 1}=X$. 

When we compute the Hadamard product of $X$ and $Y$ it often crucial to ensure some condition of generality on $X$ and $Y$, this is encoded by not having too much zero coordinates in their points.  For this purpose, we let $\Delta_i$ be the set of points of $\mathbb{P}^n$ which have at most $i+1$ non-zero coordinates.

A slightly different definition of Hadamard product is given in Definition 2.15 \cite{CCGVT2019}. If ${X}$ is a finite set of points in $\mathbb{P}^n$, then
the $r$-th {\it square-free Hadamard product} of $\mathbb{X}$
is 	\[X^{^{\underline{\star} r}} = \{P_1 \star \cdots \star P_r ~|~
P_1,\ldots, P_r \in X ~~\mbox{distinct \ points}\}.\]
From \cite[Theorem 4.7]{BCK2016}  it is known that $\mathbb{X}^{^{\underline{\star} n}}$ is a star configuration of $\binom{m}{n}$ points of $\mathbb{P}^n$, where $X\subseteq \mathbb P^n$ is a set of $m>n$ points on a line $\ell$ such that  $\ell \cap \Delta_{n-2}
= \emptyset$.

Let $V$ be a linear space in $\mathbb P^n$, for a positive integer $r$ we consider the subscheme  $$V^{\circ r}=\{ P^{\star r}\ |\  P\in V \}\subseteq \mathbb P^n,$$ called the {\it $r$-th coordinate-wise power} of $V$. Properties of these schemes have been studied in \cite{coordinate-wise}. In the following theorem, we investigate the case where $V$ is a line. 

\begin{theorem}\label{t. hadamard construction}
	Let $\ell$ be a line in $\mathbb P^n$ such that $\ell \cap \Delta_{n-2}
	= \emptyset$. Then
	\begin{itemize}
		\item[\rm {(i)}] $\ell^{\circ n}= \{ P^{\star n} | P \in \ell\}$ is a rational normal curve;
		\item[\rm {(ii)}] let $P \in \ell$, then the linear subspace of dimension $d$ osculating to $\ell^{\circ n}$ at $P^{\star n}$ (i.e., its intersection with $\ell^{\circ n}$ is supported only on $P$) is $P^{\star (n-d)}\star \ell^{\star d}$. In particular,  the osculating hyperplane to $\ell^{\circ n}$ at $P^{\star n}$ is $P\star \ell^{\star (n-1)}$;
		\item[\rm {(iii)}] for each set of $n$ distinct points on $\ell$, $P_1,\ldots ,P_{n}\in \ell,$ we have $P_1\star P_2 \star \cdots \star P_n=P_1\star \ell^{\star (n-1)}\cap \cdots \cap P_n\star \ell^{\star (n-1)}.$
	\end{itemize}
\end{theorem}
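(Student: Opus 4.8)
The plan is to fix once and for all a parametrization of $\ell$ and to reduce all three assertions to elementary facts about $n$-th powers of binary linear forms. Choose two points spanning $\ell$ and write $\ell=\{[\tilde L_0(s,t):\dots:\tilde L_n(s,t)]\mid[s:t]\in\mathbb P^1\}$ with $\tilde L_j=a_js+b_jt$. The hypothesis has an immediate reformulation: a point of $\ell$ lies in $\Delta_{n-2}$ precisely when two of its coordinates vanish simultaneously, i.e. when two of the (degree-one) forms $\tilde L_j$ are proportional; hence $\ell\cap\Delta_{n-2}=\emptyset$ is equivalent to $\tilde L_0,\dots,\tilde L_n$ being pairwise non-proportional. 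I will use the classical fact that the $n$-th powers of $n+1$ pairwise non-proportional binary linear forms form a basis of the $(n+1)$-dimensional space of binary forms of degree $n$ (a Vandermonde determinant). Then the map $\phi\colon\mathbb P^1\to\mathbb P^n$, $[s:t]\mapsto[\tilde L_0^n:\dots:\tilde L_n^n]$, is a morphism (any two $\tilde L_j$ have no common zero, so $\phi$ has no base point) defined by linearly independent forms of degree $n$; its image is therefore a non-degenerate irreducible curve of degree at most $n$, hence of degree exactly $n$ with $\phi$ birational onto it, i.e. a rational normal curve. As this image is exactly $\ell^{\circ n}=\{P^{\star n}\mid P\in\ell\}$, part (i) follows.

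For part (ii) I would compute both sides in coordinates adapted to $P$. Reparametrize $\ell$ by an affine coordinate $u$ with $P$ at $u=0$, so $\ell=\{[c_j+e_ju]_j\}$, $P=[c_0:\dots:c_n]$, and (as before) the $n+1$ points $[c_j:e_j]\in\mathbb P^1$ are distinct. Differentiating $\phi=[(c_j+e_ju)^n]_j$ at $u=0$ shows that the $r$-th derivative is proportional to $v_r:=[c_0^{\,n-r}e_0^{\,r}:\dots:c_n^{\,n-r}e_n^{\,r}]$; hence the osculating $d$-plane at $P^{\star n}$, being the linear span of the degree-$d$ jet of the rational normal curve there — equivalently the unique $d$-plane meeting the curve only at $P^{\star n}$ (with multiplicity $d+1$), which is the description used in the statement — equals $\langle v_0,\dots,v_d\rangle$. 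On the other side, expanding $\prod_{i=1}^{d}(c_j+e_ju_i)=\sum_{r=0}^{d}c_j^{\,d-r}e_j^{\,r}\,\sigma_r(u_1,\dots,u_d)$ and letting $u_1,\dots,u_d$ vary (so that $(\sigma_1,\dots,\sigma_d)$ sweeps out everything) identifies $\ell^{\star d}$ with $\langle w_0,\dots,w_d\rangle$, where $w_r:=[c_0^{\,d-r}e_0^{\,r}:\dots:c_n^{\,d-r}e_n^{\,r}]$; the $w_r$ are linearly independent by the same Vandermonde argument, so $\ell^{\star d}$ is genuinely a $d$-plane. Finally $P^{\star(n-d)}\star w_r=[c_0^{\,n-r}e_0^{\,r}:\dots:c_n^{\,n-r}e_n^{\,r}]=v_r$, and Hadamard multiplication by the fixed point $P^{\star(n-d)}$ carries $\langle w_0,\dots,w_d\rangle$ onto $\langle v_0,\dots,v_d\rangle$; hence $P^{\star(n-d)}\star\ell^{\star d}=\langle v_0,\dots,v_d\rangle$ is the osculating $d$-plane, and the case $d=n-1$ gives the statement about osculating hyperplanes.

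Part (iii) then follows almost formally. Fix $k$ and write $P_1\star\cdots\star P_n=P_k\star(P_1\star\cdots\star\widehat{P_k}\star\cdots\star P_n)$; the second factor is a Hadamard product of $n-1$ points of $\ell$, hence lies in $\ell^{\star(n-1)}$, so $P_1\star\cdots\star P_n\in P_k\star\ell^{\star(n-1)}$, which by part (ii) is the osculating hyperplane $H_k$ to $\ell^{\circ n}$ at $P_k^{\star n}$. Thus the point $P_1\star\cdots\star P_n$ — which is defined, since by hypothesis each $P_i$ has at most one vanishing coordinate and so all the partial products share a nonzero coordinate — lies in $H_1\cap\cdots\cap H_n$. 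By the remark following Definition~\ref{d.contact star}, the osculating hyperplanes of a rational normal curve of $\mathbb P^n$ at distinct points meet properly, so these $n$ hyperplanes intersect in exactly one point; hence $H_1\cap\cdots\cap H_n$ is a single point, which must equal $P_1\star\cdots\star P_n$. This is (iii). The only step that requires genuine care is the dimension bookkeeping in (ii) — that $\ell^{\star d}$ is exactly $d$-dimensional and that Hadamard multiplication by $P^{\star(n-d)}$ does not collapse it when $P$ has a vanishing coordinate — and this is precisely where the hypothesis $\ell\cap\Delta_{n-2}=\emptyset$, in the form ``the $[c_j:e_j]$ are distinct'', enters.
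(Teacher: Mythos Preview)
Your proof is correct, and for parts (i) and (iii) it follows essentially the same line as the paper (you simply spell out (iii) in more detail than the paper's one-line appeal to ``a well known property of rational normal curves''). The genuine difference is in part (ii). The paper proceeds by induction on $d$: it realises the osculating $d$-plane as the limit $\lim_{t\to 0}\bigl\langle P^{\star(n-d+1)}\star\ell^{\star(d-1)},\,(P+tQ)^{\star n}\bigr\rangle$, expands $(P+tQ)^{\star n}$ binomially, and reads off the extra spanning vector $P^{\star(n-d)}\star Q^{\star d}$. You instead compute both sides directly in coordinates adapted to $P$: the derivatives of $u\mapsto[(c_j+e_ju)^n]$ give the $v_r$, and the elementary-symmetric expansion of $\prod_i(c_j+e_ju_i)$ identifies $\ell^{\star d}$ with $\langle w_0,\dots,w_d\rangle$. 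Your route is more elementary and, as a bonus, yields an independent verification that $\ell^{\star d}$ is genuinely a $d$-plane and an explicit basis for it; the paper's limit argument is more coordinate-free in spirit and avoids reparametrising $\ell$ for each $P$, at the cost of the induction. Your closing remark about the case where $P$ has a vanishing coordinate is well placed---this is exactly where one must check that Hadamard multiplication by $P^{\star(n-d)}$ does not drop the dimension, and your Vandermonde argument for the independence of the $v_r$ covers it.
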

\begin{proof} The degree of $\ell^{\circ n}$ is $n$ from Corollary 2.8 in \cite{coordinate-wise}.  Take a parametrization of the line $\ell$, say $P_{ab}=[L_0(a,b): L_1(a,b): \ldots: L_n(a,b)]\in \ell$ where the $L_i(a,b)$ are linear forms in the variables $a,b.$ Note that, since  $\ell \cap \Delta_{n-2}$ is empty, the forms $L_i(a,b)$ are pairwise not proportional.
	\begin{itemize}
		\item [(i)]  The curve $\ell^{\circ n}$ is  parametrized by $P_{ab}^{\star n}=[L_0(a,b)^n: L_1(a,b)^n: \ldots: L_n(a,b)^n]\in \ell^{\circ n}$, where the components of $P_{ab}^{\star n}$ are a basis for the forms of degree $n$ in $a,b$ since the $L_i(a,b)$ are pairwise not proportional. Hence, $\ell^{\circ n}$ is a rational normal curve of $\mathbb P^n$.
		\item [(ii)] We will prove item (ii) by induction on $d$. Let $d=1$.  Now let $P+t Q $ be a point of $ \ell$, ($t \in \mathbb C$), thus the tangent line to  $\ell^{\circ n}$ at $P^{\star n}$ is
		\[
		\lim_{t \to 0}\ \langle\ P^{\star n}, \ (P+tQ)^{\star n}\ \rangle =
		\lim_{t \to 0}\ \langle\ P^{\star n}, \ P^{\star n}+ ntP^{\star (n-1)}\star Q+ \dots +t^nQ^{\star n}\ \rangle \]
		\[=  \langle\ P^{\star n}, \ P^{\star (n-1)}\star Q\ \rangle = P^{\star (n-1)}\star \ell.
		\]
		Assume $d>1$. By the induction hypothesis, the linear space of dimension $d-1$  osculating to  $\ell^{\circ n}$ at $P^{\star n}$ is  $P^{\star (n-d+1)}\star \ell^{\star (d-1)}$. Let $Q\neq P$ be a point on $\ell$. We have
		\[P^{\star (n-d+1)}\star \ell^{\star (d-1)}
		\]
		\[=\{P^{\star (n-d+1)}\star (a_1P+b_1Q)\star \ldots \star (a_{d-1}P+b_{d-1}Q)   \ | \ a_i , b_i \in \mathbb C\}\]
		\[= \langle\ P^{\star n}, \  P^{\star (n-1)}  \star Q, \  P^{\star (n-2)}  \star Q^{\star 2}, \ldots, P^{\star (n-d+1)}  \star Q^{\star (d-1)} \rangle \ .  \]
		Now let again  $P+t Q $ be a point of $ \ell$, ($t \in \mathbb C$). The linear space of dimension $d$ osculating to $\ell^{\circ n}$ at $P^{\star n}$ can be obtained by computing the following limit
		\[
		\lim \limits_{t\to 0} \langle\ P^{\star (n-d+1)} \star \ell^{\star (d-1)}, \ (P+tQ)^{\star n}
		\ \rangle ,\]
		and this limit, by an easy computation and the equality above, becomes
		\[
		\begin{array}{rll}
		& \lim \limits_{t\to 0} \langle\
		P^{\star n}, \  P^{\star (n-1)}  \star Q, \  P^{\star (n-2)}  \star Q^{\star 2}, \ldots
		,P^{\star (n-d+1)}  \star Q^{\star (d-1)}, \ (P+tQ)^{\star n}
		\ \rangle \\
		= & \lim \limits_{t\to 0} \langle\
		P^{\star n}, \  P^{\star (n-1)}  \star Q,  \ldots
		,P^{\star (n-d+1)}  \star Q^{\star (d-1)}, \  P^{\star n}+ ntP^{\star (n-1)}\star Q+ \dots +t^nQ^{\star n} \ \rangle \\
		= & \lim \limits_{t\to 0}   \langle\
		P^{\star n}, \  P^{\star (n-1)}  \star Q,  \ldots
		,P^{\star (n-d+1)}  \star Q^{\star (d-1)}, \  {n \choose d} t^{d}P^{\star (n-d)}\star Q^{\star d}+ \dots +t^nQ^{\star n} \ \rangle \\
		= &  \langle\
		P^{\star n}, \  P^{\star (n-1)}  \star Q,  \ldots
		,P^{\star (n-d+1)}  \star Q^{\star (d-1)}, \ P^{\star (n-d)}\star Q^{\star d} \ \rangle \\
		= &P^{\star (n-d)}\star \ell^{\star d}.
		\end{array}
		\]
		\item [(iii)] It follows from (ii) and a well known property of rational normal curves in $\mathbb P^n.$
\end{itemize}\end{proof}
%
In the next remark we give more details for $n=2$.
\begin{remark} Consider a line $\ell$ in $\mathbb P^2$ and the respective conic $\ell^{\circ 2}$, let $\mathbb C[x,y,z]$ be the coordinate ring of $\mathbb P^2$. We have the following facts. 
	
	\begin{itemize}
		\item[(i)] Say $\ell$ defined by the equation $\alpha x+ \beta y-z=0$, where $\alpha, \beta\neq 0$. Then, from Theorem \ref{t. hadamard construction}(i) we have that $\ell^{\circ 2}$ is a conic. Precisely, one can check that \[\ell^{\circ 2}:\ \ \ (\alpha^2 x+ \beta^2 y-z)^2-4\alpha^2\beta^2 xy=0.\]
		
		\item[(ii)] From Theorem \ref{t. hadamard construction}(ii), for each $P\in \ell$ the line $P\star \ell$ is tangent to $\ell^{\circ 2}$ at $P\star P$.
		
		\item[(iii)] From Theorem \ref{t. hadamard construction}(iii),  for any $P, Q\in \ell$, $P\neq Q$,  the two tangent lines to $\ell^{\circ 2}$ through $P \star Q$ are $P \star \ell $ and $Q \star \ell$. Note that this allows us to find an explicit Hadamard decomposition of any point  in the plane $\mathbb P^2=\ell \star \ell$. In fact,  let  $A\in \mathbb P^2$,  let $a$ and $b$ be the tangent lines to the conic $\ell^{\circ 2}$ through $A$, and let $P\star P= a \cap \ell^{\circ 2}$, $Q \star Q= b \cap \ell^{\circ 2}$, then $A=P \star Q $.
				
		\item[(iv)] For any $P, Q\in \ell$, $P\neq Q$, the  line through $P\star P$ and $Q\star Q$  is the polar line of the point $P\star Q$ with respect to $\ell^{\circ 2}$.
	
		\item[(v)] The condition $\ell \cap \Delta_{0} = \emptyset$ ensures that $\ell$ meets the lines $x=0$, $y=0$ and $z=0$ in three distinct points, say $P_x, P_y$ and $P_z$, respectively. Note that, from the definition of Hadamard product, $P_x*\ell$ is the line $x=0$ and analogously $P_y*\ell$ is the line $y=0$ and $P_z*\ell$ is $z=0$. Then, the conic $\ell^{\circ 2}$ is tangent to the coordinate axes in $P_x*P_x$, $P_y*P_y$ and $P_z*P_z$.
	\end{itemize}
\end{remark}
\section{Complete intersections union of two contact star configurations in $\mathbb P^2$ }\label{s. 2 contact}
In this section $\gamma$ is an irreducible conic in $\mathbb P^2$, and we set $S=\mathbb C[x,y,z]=\mathbb C[\mathbb P^2]$. 
The main result of this section is the following Theorem. We postpone its proof until page \pageref{proof.main}, after the development of some special case.

\begin{theorem}\label{t.main} Let $X=\mathbb{S}(\mathcal{L})$ and $Y=\mathbb{S}(\mathcal{M})$  be two contact star configurations in $\mathbb P^2$ on the same conic, where $\mathcal{L}=\{\ell_1,\ldots,\ell_r\}$ and $\mathcal{M}=\{m_1,\ldots,m_s\}$ are two sets of distinct lines. 
	Then

\begin{itemize}
	\item[\rm{(a)}] if $s=r-1$, then general form in $\left(I_{X\cup Y}\right)_{r-1}$ is irreducible;
	\item[\rm{(b)}] if $s=r-1$, then $X\cup Y$ is a complete intersection of type $(r-1,r-1)$;
	\item[\rm{(c)}] if $s=r$, then the general form in $\left(I_{X\cup Y}\right)_{r}$ is irreducible;
	\item[\rm{(d)}] if $s=r$, then  $\left(I_{X\cup Z}\right)_{r-1}=\left(I_{X\cup Y}\right)_{r-1}$ where $Z$ denotes a set of $r-1$ collinear points in $Y$;
	\item[\rm{(e)}] if $s=r$, then $X\cup Y$ is a complete intersection of type $(r-1,r)$.
\end{itemize}
\end{theorem}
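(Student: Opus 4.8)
The plan is to reduce all five assertions to a single dimension computation plus a Bertini‑type irreducibility statement. Since $X,Y$ are reduced with $|X|=\binom r2$ and $|Y|=\binom s2$, we have $|X\cup Y|=(r-1)^2$ when $s=r-1$ and $|X\cup Y|=r(r-1)$ when $s=r$. Hence if we produce $F,G\in I_{X\cup Y}$ of degrees $(r-1,r-1)$ (resp. $(r-1,r)$) with $\gcd(F,G)=1$, then $V(F,G)$ is zero‑dimensional of length $(r-1)^2$ (resp. $r(r-1)$) by Bézout, it contains the reduced scheme $X\cup Y$ of the same length, so $V(F,G)=X\cup Y$ is the asserted complete intersection. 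Thus (b) and (e) reduce to producing coprime $F,G$ of the correct degrees, while (a) and (c) — which also yield the irreducible curve of degree $s$ — reduce to irreducibility of a general member of $\left(I_{X\cup Y}\right)_{r-1}$, resp. $\left(I_{X\cup Y}\right)_{r}$.

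The engine is an explicit evaluation. Parametrising $\gamma$ as $\{[1:u:u^2]\}$, the tangent line at parameter $t$ is $t^2x-2ty+z=0$ and the intersection of the tangents at $t',t''$ is $[1:\tfrac{t'+t''}{2}:t't'']$. Write $\mathcal L$ and $\mathcal M$ as the tangents at $t_1,\dots,t_r$ and $u_1,\dots,u_s$; as the lines are distinct, $\{t_i\}\cap\{u_k\}=\emptyset$. Recall $\left(I_X\right)_{r-1}$ is $r$‑dimensional with basis $M_i:=\prod_{j\ne i}\ell_j$. Evaluating at $Q_{kl}:=m_k\cap m_l\in Y$, using $\ell_j(Q_{kl})=(t_j-u_k)(t_j-u_l)$ and $g(u):=\prod_{j=1}^r(u-t_j)$, gives
\[
M_i(Q_{kl})=\prod_{j\ne i}(t_j-u_k)(t_j-u_l)=\frac{g(u_k)\,g(u_l)}{(u_k-t_i)(u_l-t_i)} .
\]
So in the $r\times\binom s2$ matrix $\bigl[M_i(Q_{kl})\bigr]$ the column $(k,l)$ equals $g(u_k)g(u_l)$ times the vector with $i$‑th entry $\tfrac1{(u_k-t_i)(u_l-t_i)}=\tfrac1{u_k-u_l}\bigl(\tfrac1{u_l-t_i}-\tfrac1{u_k-t_i}\bigr)$; hence its column space is $\operatorname{span}\{w_k-w_l:1\le k<l\le s\}=\operatorname{span}\{w_1-w_2,\dots,w_1-w_s\}$, where $w_k:=\bigl(\tfrac1{u_k-t_i}\bigr)_{i=1}^r$. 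The Cauchy matrix $\bigl[\tfrac1{u_k-t_i}\bigr]$ has maximal rank $s$ (here $s\le r$ and the $u_k,t_i$ are pairwise distinct), so these $s-1$ vectors are independent and the matrix has rank exactly $s-1$; therefore
\[
\dim\left(I_{X\cup Y}\right)_{r-1}=r-(s-1)=r-s+1,
\]
which is $2$ for $s=r-1$ and $1$ for $s=r$. The identical count with $Z=\{Q_{kj}:j\ne k\}\subset Y$ (the $r-1$ collinear points of $Y$ on $m_k$) gives $\dim\left(I_{X\cup Z}\right)_{r-1}=1$, and since $Z\subseteq Y$ forces $\left(I_{X\cup Y}\right)_{r-1}\subseteq\left(I_{X\cup Z}\right)_{r-1}$, equality of dimensions gives (d).

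Next I would assemble the statements. For $s=r-1$, $\left(I_{X\cup Y}\right)_{r-1}$ is $2$‑dimensional; one shows it has no fixed component (a fixed component would, via the dimension formula applied to subconfigurations together with a Bézout count, force a contradiction), so its base locus is exactly the finite reduced set $X\cup Y$, any two independent members are coprime — giving (b) — and a Bertini argument (no fixed component, reduced base locus, so the associated map $\mathbb P^2\dashrightarrow\mathbb P^1$ is not nontrivially composed with a pencil) shows the general member is irreducible, giving (a) and the irreducible degree‑$(r-1)$ curve. For $s=r$, I would apply (b) to $X$ and $Y':=\mathbb{S}\bigl(\{m_1,\dots,m_{r-1}\}\bigr)$ on the same conic: $X\cup Y'=V(F_1,F_2)$ is a complete intersection of type $(r-1,r-1)$ with $\langle F_1,F_2\rangle=\left(I_{X\cup Y'}\right)_{r-1}$ and $\gcd(F_1,F_2)=1$. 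Writing $Y=Y'\cup W$ with $W=m_r\cap Y$ the $r-1$ collinear points on $m_r$ (and noting $m_r$ meets neither $X$ nor $Y'$, since no three tangent lines to $\gamma$ are concurrent), the unique degree‑$(r-1)$ form $F\in\left(I_{X\cup Y}\right)_{r-1}$ is precisely the member of $\langle F_1,F_2\rangle$ through $W$. Choosing $G_0\in\langle F_1,F_2\rangle$ coprime to $F$ (possible, as the pencil has no fixed component), one checks $G:=m_rG_0\in\left(I_{X\cup Y}\right)_r$ and $\gcd(F,G)=1$ — a common irreducible factor would divide $G_0$ (against the choice) or equal $m_r$ (impossible: $m_r\nmid F$, else $F/m_r$ would be a nonzero form of degree $r-2$ vanishing on the complete intersection $X\cup Y'$). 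Hence $X\cup Y=V(F,G)$ is a complete intersection of type $(r-1,r)$, which is (e); finally the forms $G\in\left(I_{X\cup Y}\right)_r$ with $\gcd(F,G)\ne1$ form a proper subset, and a Bertini argument gives (c) and the irreducible degree‑$r$ curve. The small cases $r=2,3$ I would check directly.

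I expect the main obstacle to be the irreducibility assertions (a) and (c) (and, relatedly, the coprimality built on them): passing from ``the base locus of the linear system is the finite reduced set $X\cup Y$'' to ``the general member is irreducible'' requires ruling out both a fixed component and the system being composed with a pencil, and — although finiteness and reducedness of $X\cup Y$ do exactly this — making the argument clean and robust (for instance to configurations where a Cauchy matrix might drop rank, or where one allows $\mathcal L$ and $\mathcal M$ to share lines) is the delicate point. By contrast the dimension count, which is the real content, is short once the Cauchy matrix is spotted, and the complete intersection conclusions are then purely formal from Bézout and the length inequality.
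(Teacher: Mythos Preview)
Your approach is correct in its core and genuinely different from the paper's. The paper argues by induction on $r$: the base case $r=s=3$ is reduced to Brianchon's theorem, and the inductive step manufactures reducible curves $\ell_i\cup C_{r-2}^{(i)}$ in $(I_{X\cup Y})_{r-1}$ (with $C_{r-2}^{(i)}$ supplied by the inductive hypothesis on $X^{(i)}\cup Y$), argues this system has no fixed component, applies Bertini, and for (d) compares $h$-vectors via the liaison formula. You instead parametrize $\gamma$, evaluate the generators $M_i=\prod_{j\ne i}\ell_j$ at the points of $Y$, and recognize the column space of the evaluation matrix through the partial-fraction/Cauchy identity; this yields $\dim(I_{X\cup Y})_{r-1}=r-s+1$ for \emph{every} $s\le r$ at once, with no induction, no liaison, and no Brianchon. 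That single formula already settles (d) and the dimension side of (b) and (e), and it is strictly more than the paper proves at this point --- it is the degree-$(r-1)$ instance of the $h$-vector identity with two fat points that the paper only obtains later, using the present theorem as input.

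The part you flag as the obstacle is real but fixable inside your own framework. As written, ``a fixed component would, via the dimension formula applied to subconfigurations together with a B\'ezout count, force a contradiction'' is not yet a proof. The cleanest completion is to reproduce the paper's construction with your formula replacing the induction: for $s=r-1$ and each $i$, your Cauchy computation applied to $X^{(i)}\cup Y$ (both built from $r-1$ tangents) gives $\dim(I_{X^{(i)}\cup Y})_{r-2}=1$, hence a unique curve $C^{(i)}$ of degree $r-2$; then $\ell_iC^{(i)}\in(I_{X\cup Y})_{r-1}$, and since no $\ell_j$ can divide $C^{(i)}$ (else $C^{(i)}/\ell_j$ would be a form of degree $r-3$ vanishing on $Y$, while $I_Y$ starts in degree $r-2$), two such curves with distinct line components exhibit the pencil with no fixed part. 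Bertini then gives (a); the analogous move gives (c); and your deductions of (b), (e) from coprimality go through as you wrote them.
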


\begin{remark}
	Theorem \ref{t.main} (e) in particular claims that the twelve points union of two contact star configurations on the same conic, $X=\mathbb S(\ell_1, \ell_2, \ell_3,\ell_4)$ and $Y=\mathbb S(m_1, m_2, m_3,m_4)$ lie on a cubic. This case is pictured in Figure \ref{fig.cubic12points}.
	
	\begin{figure}[h]
		\centering
		\includegraphics[scale=0.3]{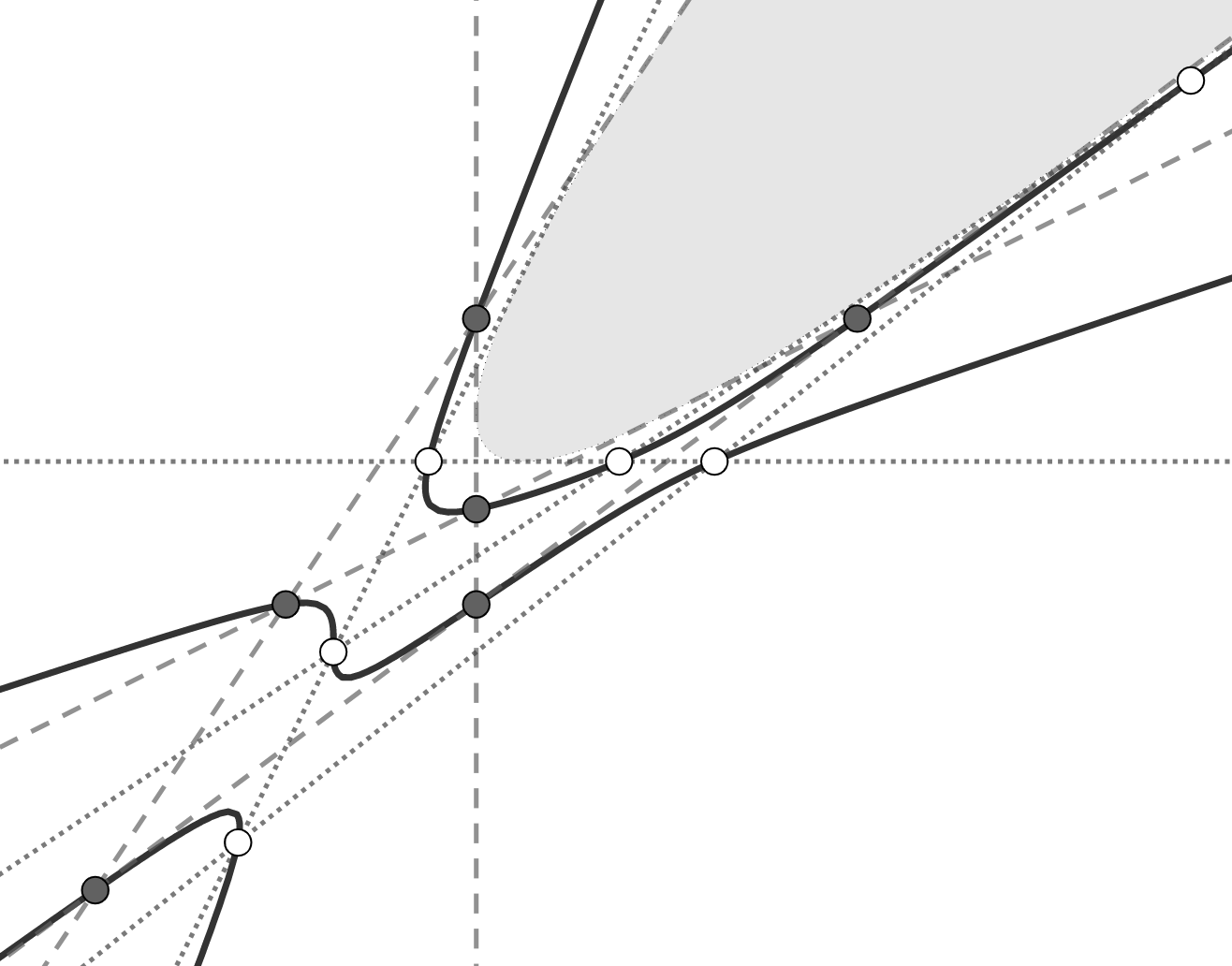}
		\caption{ A cubic through twelve points union of two contact star configurations. }\label{fig.cubic12points}
	\end{figure}
\end{remark} 

\begin{remark}\label{r. c i had}
	Combining Theorem \ref{t.main} and Theorem \ref{t. hadamard construction} we are able to explicitly give the coordinates of a (not trivial) complete intersection of rational points in $\mathbb P^2$ of type $(a,b)$, where either $b=a$ or $b=a-1$.  First, we need to fix a rational line $\ell$ in $\mathbb P^2$. Then, we take $a+b+1$ distinct rational points on $\ell$ divided in two sets $X$ and $Y$ containing $a+1$ and $b$ points, respectively. Then  $\mathbb{X}^{^{\underline{\star} 2}}\cup \mathbb{Y}^{^{\underline{\star} 2}}$ is the complete intersection we are looking for.  
	For instance, let $\ell$ be defined by the linear form $x+y-z$ and $a=b=3$. We pick on $\ell$ the following points 
	$$X=\{[1:1:2],[1:2:3],[1:3:4],[1:4:5] \} \ \ 
	\text{and} 
	\  \ Y=\{[1:-1:0],[1:-2:-1],[1:-3:-2]\}.$$ 
	Then, the set of 9 points 
	$$\begin{array}{rl}
	\mathbb{X}^{^{\underline{\star} 2}}\cup \mathbb{Y}^{^{\underline{\star} 2}}=&\{ [1:2:6], [1:3:8], [1:4:10], [1:6:12], [1:8:15], [1:12:20] \}\ \cup\\ 
	&\{[1:2:0], [1:3:0], [1:6:2]\}
	\end{array} $$
	from Theorem \ref{t.main} (b), is a complete intersection of type $(3,3)$.
\end{remark}

\begin{remark}\label{r. cases s =2}
Note that if $r=s=2$, then $X\cup Y$ consists of two points and the statements (c), (d), (e) in Theorem \ref{t.main} are trivially true.  The statements (a), (b) in the case $r=3, s=2$ are also trivial. Indeed, $X\cup Y$ consists of four points in linear general position. The first interesting case occurs when $r=s=3$ (see Figure \ref{fig.3-3}).
\end{remark}

In the following lemma we prove Theorem \ref{t.main} in case $r=s=3$.
\begin{lemma}\label{l. case 3-3}
Let $X=\mathbb{S}(\ell_1,\ell_2,\ell_3)$ and $Y=\mathbb{S}(m_1,m_2,m_3)$ be contact star configurations on a same conic.  Then
	\begin{itemize}
		\item[\rm{(i)}]   the general cubic through $X\cup Y$ is irreducible;
		\item[\rm{(ii)}]  a conic containing 5 points of $X\cup Y$ contains  $X\cup Y$;  
		\item[\rm{(iii)}]  $X\cup Y$ is a complete intersection of a conic and a cubic.
	\end{itemize}

\end{lemma}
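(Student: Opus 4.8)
The plan is to set up convenient coordinates using the Hadamard description from Theorem \ref{t. hadamard construction}, so that the conic $\gamma$ is $\ell^{\circ 2}$ for a fixed rational line $\ell$ with $\ell \cap \Delta_0 = \emptyset$, and the two star configurations are $X = \mathbb{X}^{\underline{\star}2}$, $Y = \mathbb{Y}^{\underline{\star}2}$ for two triples of distinct points $\mathbb{X} = \{A_1,A_2,A_3\}$, $\mathbb{Y}=\{B_1,B_2,B_3\}$ on $\ell$. The six points of $X$ are the pairwise Hadamard products $A_i \star A_j$ ($i<j$) together with... actually $\binom{3}{2}=3$ points, so $X$ has $3$ points, $Y$ has $3$ points, and $X\cup Y$ has (generically) $6$ points. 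For (ii), I would argue by a dimension count combined with the explicit generator description of the ideal of a star configuration: $I_X$ is generated in degree $r-1=2$ by the three conics $\ell_i\ell_j$, so $X$ itself lies on a pencil of conics (in fact $\dim (I_X)_2 = 6 - 3 = 3$, a net, or rather the three products span a $3$-dimensional space). The key point is that $X$ imposes only $3$ conditions on conics although it has $3$ points — wait, $3$ points in general position impose $3$ conditions, so $\dim(I_X)_2 = 3$; similarly for $Y$. Then $\dim(I_{X\cup Y})_2 \ge 6 - 6 = 0$, so I cannot get a conic for free; instead I must use the contact (tangency) structure: the three lines $\ell_i$ are all tangent to $\gamma$, and this forces extra incidences.

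For part (ii) — that a conic through $5$ of the $6$ points contains all $6$ — the cleanest route is liaison/Cayley–Bacharach type reasoning, or a direct computation. The honest approach: show $\dim(I_{X\cup Y})_2 = 1$, i.e. there is a (unique) conic $\gamma'$ through all six points. To see this, note that the six points $A_i\star A_j$, $B_i \star B_j$ all lie on the Hadamard product structure; using the explicit equation of $\ell^{\circ 2}$ from the Remark and the fact that $A_i\star A_j$ lies on the two tangents $A_i\star\ell$ and $A_j\star\ell$, I would write down the candidate conic as a combination and verify the six incidences — or, better, invoke that $X\cup Y$ is contained in the complete intersection of the sextic $\left(\prod \ell_i\right)\left(\prod m_j\right)$-type products and exploit linkage. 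Once (ii) is known, (iii) follows: we have a conic $C$ and, since $X\cup Y$ consists of $6$ points not all on a line (the contact configuration is genuinely $2$-dimensional) and the conic is irreducible (or at least $C$ has no component containing too many points), a general cubic through the $6$ points meets $C$ in exactly $6$ points by Bézout, hence cuts out exactly $X\cup Y$ on $C$, giving the complete intersection $(2,3)$. Then (i), irreducibility of the general cubic, follows from (iii) by a standard argument: the linear system of cubics through $6$ points that form a complete intersection $(2,3)$ with a conic $C$ is, modulo $C$ times lines, a system whose general member is irreducible — if every cubic through $X\cup Y$ were reducible it would contain a line or the conic $C$ through at least $4$ (resp. all $6$) of the points, and I would rule out a line through $\ge 4$ of the six points by the geometry of the contact configuration (no four of them are collinear, which can be read off the tangent-line description), and rule out forced reducibility $C \cup (\text{line})$ by noting the residual line would have to pass through the remaining point freely, contradicting generality.

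The main obstacle I expect is part (ii): proving that the six points of $X\cup Y$ actually lie on a conic is not a formal consequence of the star-configuration generator description (which only gives conics through each triple separately). This is where the \emph{contact} hypothesis does real work, and I would expect to need either an explicit coordinate computation with the parametrization $P_{ab} = [L_0(a,b):L_1(a,b):L_2(a,b)]$ — writing each point as $[L_i(a,b)^2 \cdot \ldots]$ is awkward, so more likely using the tangent-line equations $A_i \star \ell$ and the conic equation $(\alpha^2 x + \beta^2 y - z)^2 - 4\alpha^2\beta^2 xy = 0$ — or a slick linkage argument: the sextic $F = \ell_1\ell_2\ell_3 m_1 m_2 m_3$ and the sextic $G = (\text{six tangent lines in the other grouping})$ share the conic $\gamma$ as a common component in a suitable degenerate limit, and $X\cup Y$ is linked to something of small degree inside a complete intersection, forcing it onto a conic. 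Getting (ii) cleanly for the special case $r=s=3$ is the crux, after which (iii) and (i) are routine Bézout/genericity arguments, and the general $r,s$ case in Theorem \ref{t.main} presumably bootstraps from here by induction.
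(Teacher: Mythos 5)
You correctly identify that everything hinges on part (ii) --- producing a conic through all six points --- and you correctly observe that this cannot come from a naive dimension count ($\dim(I_{X\cup Y})_2\ge 6-6=0$ is all one gets for six points in general position). But at exactly that juncture your proposal stops being a proof: you offer two candidate strategies (an explicit computation in Hadamard coordinates, or an unspecified linkage involving the sextic $\ell_1\ell_2\ell_3m_1m_2m_3$ ``in a suitable degenerate limit'') and carry out neither, saying yourself that this is ``the crux.'' That is a genuine gap, not a presentational one: the tangency hypothesis has to be converted into an incidence statement, and no mechanism for doing so is supplied.

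The paper's mechanism is short and classical. The six points can be covered by a cubic that is a union of three lines (pair the six points into three pairs and take the three joining lines), and this can be done in more than one way; two such cubics $C_3$ and $C_3'$ meet in a complete intersection of nine points containing $X\cup Y$, and Brianchon's theorem, applied to the hexagon circumscribed about $\gamma$ by the six tangent lines, forces the three residual points to be collinear. Liaison via \eqref{eq.Corollary 5.2.19} then gives $h_{X\cup Y}=(1,2,3,2,1)-(0,0,1,1,1)=(1,2,2,1)$, so $\dim(I_{X\cup Y})_2=1$; since no four of the six points are collinear (each of the six tangent lines carries exactly two of them, and no three of the tangent lines are concurrent), any five of the points already determine that conic, which gives (ii), and (iii) follows essentially as you say. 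Note also that the paper proves (i) directly, by applying Bertini to the linear system of cubics through the six points (no fixed component, not composite with a pencil), rather than deducing it from (iii); your route for (i) could be made to work, but as written it too leaves the key exclusions (no four points collinear, the conic not being a forced component) as assertions to be ``read off the geometry.''
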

\begin{proof}
	Since the linear system of the cubics through $X\cup Y$ is not composite with a pencil and it doesn't have a common component, then by Bertini's Theorem, see for instance \cite[Section 5]{kleiman1998} and \cite[Corollary 10.9]{hartshorneAG},  the generic cubic of the system is irreducible.
   In order to complete the proof, let $C_3, C_3'$ be two cubics union of lines through $X\cup Y$, (see Figure \ref{fig.3-3-brian}). 
	The intersection $C_3\cap C_3'$ consists of 9 points and,  by Brianchon's Theorem, see \cite[pp. 146-147]{casey1888sequel}, the three of them  not lying in $X\cup Y$  are on a line (the white circles in Fig. \ref{fig.3-3-brian}).  Thus, by liaison (use formula \ref{eq.Corollary 5.2.19}) the set $X\cup Y$ is contained in a conic.
\end{proof}

\begin{figure}[h]
	\centering
	\subfloat[Case $r=s=3.$]{
	\includegraphics[scale=0.47]{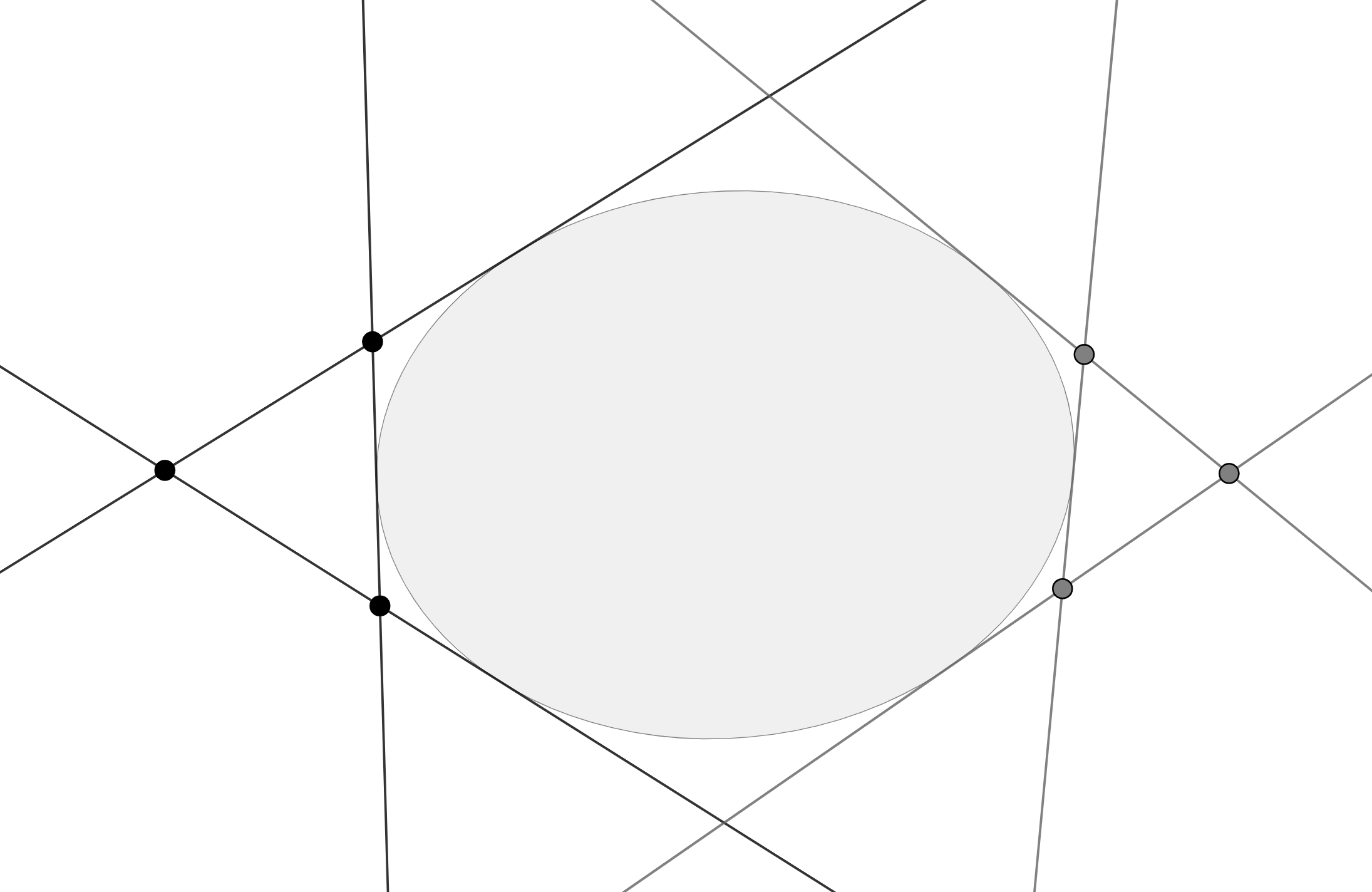}
\label{fig.3-3}}
	\qquad
	\subfloat[The nine points intersection of the two cubics.]{
		\includegraphics[scale=0.47]{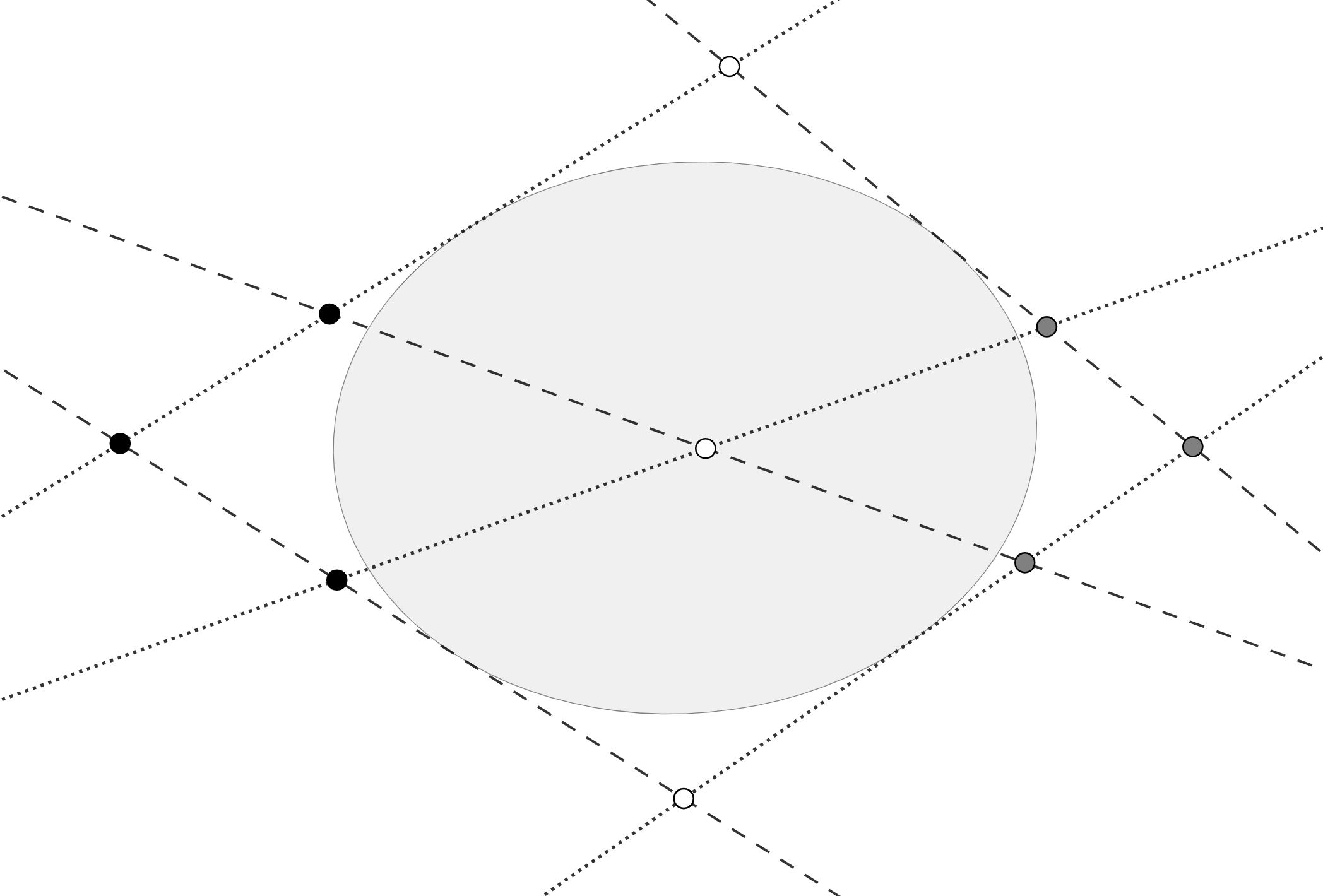}
		\label{fig.3-3-brian}}
	\caption{ }
	\label{fig:case33}
\end{figure}

Now we are ready to prove Theorem \ref{t.main}. We recall its statement.

\noindent{\bf Theorem \ref{t.main}}{\it\ Let $X=\mathbb{S}(\mathcal{L})$ and $Y=\mathbb{S}(\mathcal{M})$ be two contact star configurations in $\mathbb P^2$ on the same conic, where $\mathcal{L}=\{\ell_1,\ldots,\ell_r\}$ and $\mathcal{M}=\{m_1,\ldots,m_s\}$ are two sets of distinct lines. 
	Then	
	\begin{itemize}
		\item[\rm{(a)}] if $s=r-1$, then general form in $\left(I_{X\cup Y}\right)_{r-1}$ is irreducible;
		\item[\rm{(b)}] if $s=r-1$, then $X\cup Y$ is a complete intersection of type $(r-1,r-1)$;
		\item[\rm{(c)}] if $s=r$, then the general form in $\left(I_{X\cup Y}\right)_{r}$ is irreducible;
		\item[\rm{(d)}] if $s=r$, let $Z= Y\cap m_r$, then  $\left(I_{X\cup Z}\right)_{r-1}=\left(I_{X\cup Y}\right)_{r-1}$;
		\item[\rm{(e)}] if $s=r$, then $X\cup Y$ is a complete intersection of type $(r-1,r)$.
	\end{itemize}
	}

\begin{proof}\label{proof.main} We proceed by induction on $r$.  The cases $r\le 3$ follow by Remark \ref{r. cases s =2} and Lemma \ref{l. case 3-3}.  
So assume $r>3$. Set $X^{(i)}=\mathbb{S}(\mathcal{L}\setminus\{\ell_i\})$, thus $X^{(i)}$ is a star configuration of points defined by $r-1$ lines.
\begin{itemize}
	\item[(a)] Consider the sets $X^{(i)}\cup  Y$, for $i=1,\ldots, r-1$. By   (e) and by induction, $X^{(i)}\cup  Y$ is complete intersection of type $(r-2,r-1)$, thus there exists a curve of degree $r-2$ through  $X^{(i)}\cup  Y$, say $C_{r-2}^{(i)}$. 
	
	Note that $C_{r-2}^{(i)}$ does not have $\ell_r$ as a component. 
	In fact,	
	if $\ell_r$ is a component of $C_{r-2}^{(i)}$, then, by removing $\ell_r$, since it does not contain points of $Y$, we get a curve of degree $r-3$ through $Y$. A contradiction, since $I_Y$ starts in degree $r-2.$
	
	Since $\ell_r$ is not a component for $C_{r-2}^{(i)}$, then $C_{r-2}^{(i)}$ meets $\ell_r$ in exactly $r-2$ points that are	 $(X\cap \ell_r)\setminus( \ell_{i}\cap \ell_{r})$.
	
	From here it easily follows that the linear system of curves
	\begin{equation}\label{ls}
	\left\langle \ell_i\cup C_{r-2}^{(i)}\ |\ i=1,\ldots, r-1\right\rangle
	\end{equation} 
	does not have any fixed component and it is not composite with a pencil. Thus by Bertini's Theorem the general curve in \refeq{ls} is irreducible.

%
%
%

	\item[(b)] By (a), since the linear system  \eqref{ls} has dimension at least 2,  there exist two irreducible curves of degree $r-1$ thorough $X\cup Y$. Since  $|X\cup Y|= \binom{r}{2}+\binom{r-1}{2}=(r-1)^2$, we are done.

	\item[(c)] From (a) the generic curve of degree $r-1$ through $X^{(i)}\cup  Y$  is irreducible, say $C_{r-1}^{(i)}$, for each $i=1,\ldots, r.$ Then, the linear system  $\left\langle \ell_i \cup C_{r-1}^{(i)}\ |\ i=1,\ldots, r\right\rangle$ does not have any fixed component and it is not composite with a pencil. Again for Bertini's Theorem we are done.
	\item[(d)]  Let  $P_{ir}=\ell_i\cap \ell_r$ and  $Y^{(i)}=\mathbb{S}(\mathcal{M}\setminus\{m_i\})$, $i=1,\ldots, r-1$. From (b) the set $X\cup Y^{(i)}$ is a complete intersection of two curves of degree $r-1,$  hence 
	$$\dim_k \left(I_{X\cup Y^{(i)}}\right)_{r-1}=2$$ and
	 its $h$-vector is 
	\[
	h_{X\cup Y^{(i)}}=(1,2,3, \ldots, r-2, r-1, r-2, \ldots, 2,1).
	\]

	Moreover, $Y^{(i)}\setminus Z=\mathbb S(\mathcal M \setminus \{m_i,m_r\})$ is a star configuration defined by $r-2$ lines and then its $h$-vector is
	\[
	h_{Y^{(i)}\setminus Z}=(1,2,3, \ldots, r-3).
	\]
	Thus by liaison, see relation \eqref{eq.Corollary 5.2.19},  we have
	\[ 
	h_{X\cup (Z\setminus \{P_{ir}\})}=(1,2,3, \ldots, r-2, r-1, r-2),
	\]
	then
	$$\dim_k \left(I_{X\cup (Z\setminus \{P_{ir}\})}\right)_{r-1}=2.$$ 
	
		Since $X\cup (Z\setminus \{P_{ir}\})\subseteq X\cup Y^{(i)}$ 
		we have
	\begin{equation}\label{eq. ideal equal r-1}
	\left(I_{X\cup (Z\setminus \{P_{ir}\})}\right)_{r-1}=\left(I_{X\cup Y^{(i)}}\right)_{r-1}.
	\end{equation}
	
Note that in order to prove that $\dim \left(I_{X\cup (Z\setminus \{P_{ir}\})}\right)_{r-1}=2$, we can use Lemma 2.2 in~\cite{CCGbipolynomial} instead of liaison.	
	
%
%
	Let  $F\in (I_{X\cup Z})_{r-1}$,  then, by \eqref{eq. ideal equal r-1},  $F\in (I_{X\cup Y^{(i)}})_{r-1}$ for each $i=1,\ldots,r-1$.
	So, $F\in (I_{X\cup Y})_{r-1}$. It follows that $(I_{X\cup Y})_{r-1}=(I_{X\cup Z})_{r-1}$.

	\item[(e)]  Let $F\in \left( I_{X\cup Y} \right)_{r-1}$ as in the proof in item (d).
	By item (c) there exists an irreducible form $G\in \left( I_{X\cup Y}  \right)_{r}.$
	Since $$|F\cdot G|=r(r-1) = |X\cup Y|,$$ then $X\cup Y$ is a complete intersection of the curves defined by $F$ and $G.$
\end{itemize} 
\end{proof}
A natural question related to Theorem \ref{t.main} arises about the irreducibility of the curve of degree $r-1$ in the case $r=s$. It cannot be guaranteed. Indeed, in the next example,  we produce a set of 20 points (case $r=s=5$,) complete intersection of a quartic and a quintic, where the curve of degree 4 satisfying item (e) of Theorem \ref{t.main} is  union of two conics. 	

\begin{example}\label{e. not irred r-1}
	 Let $\mathcal{L}=\{\ell_1,\ldots,\ell_5\}$ be a set of five lines tangent to an irreducible conic $\gamma$ (the gray parabola in Figure \ref{fig:not-irred}). 
	The contact star configuration $X=\mathbb{S}(\mathcal{L})$ consists of ten points.
	
	We  split the ten points in two sets of five points, each contained in an irreducible conic. Then, we consider the quartic union of these two conics (the one dashed and the other dotted in Figure~\ref{fig:not-irred}). 
	
	\begin{figure}[h]
			\centering
			\includegraphics[width=.4\linewidth]{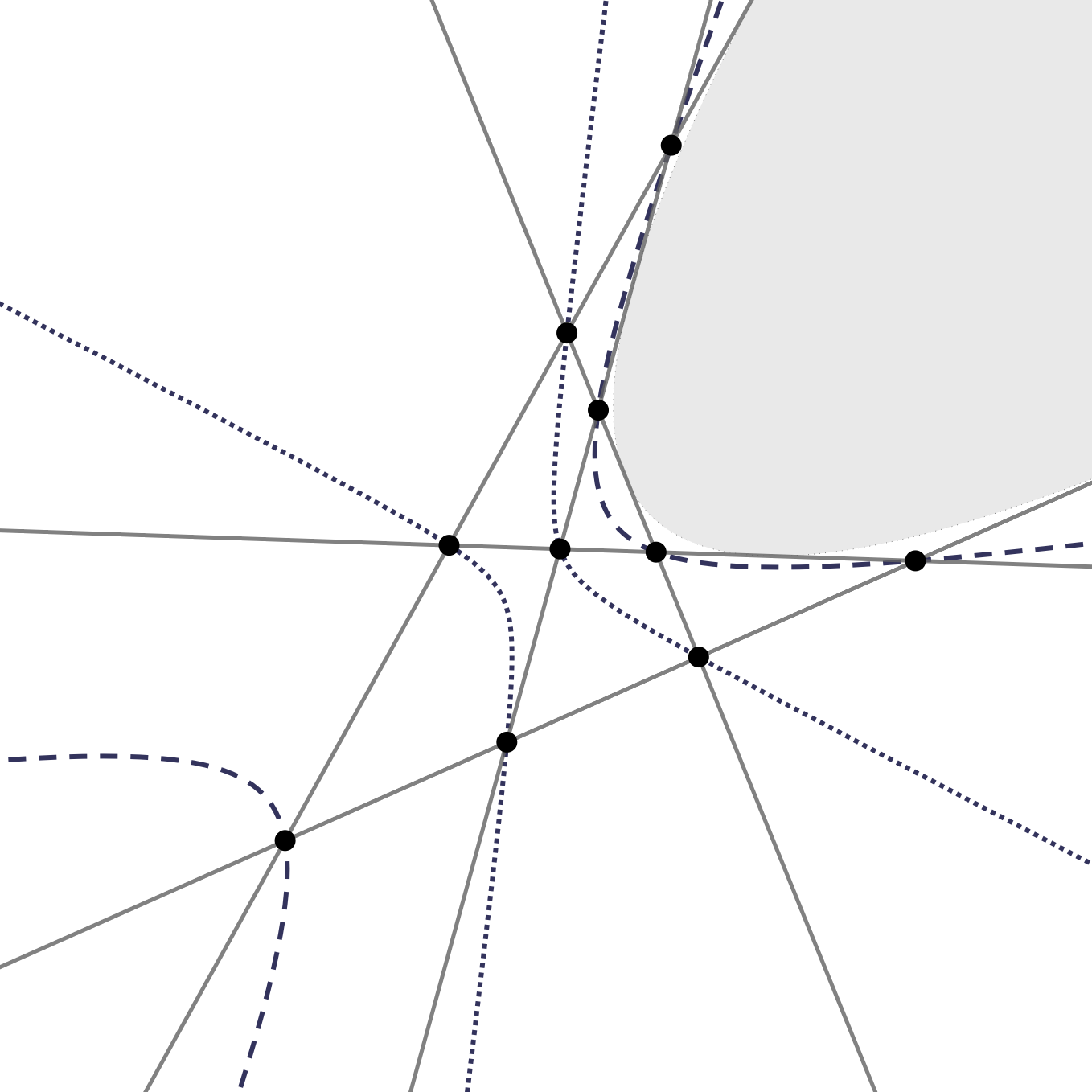}
			\caption{Example \ref{e. not irred r-1}: The star configuration $X$ and the not irreducible quartic through $X$.}
			\label{fig:not-irred}
		\end{figure}
	
	Now, we take a new line $m$ tangent to $\gamma$, see Figure \ref{fig:not-irred-0}. Through each of the four points intersection of $m$ with the quartic, there is an extra tangent line  to $\gamma$. Call these tangent liness $m_1, m_2,m_3,m_4$, see Figure \ref{fig:not-irred-1}. The set of points $Y=\mathbb{S}(\{m_1, \ldots, m_4, m\})$ is a contact star configuration and, from Theorem \ref{t.main} (d), $X\cup Y$ is contained in the quartic. Of course this quartic, by construction, is not irreducible.

\begin{figure}[h]
	\centering
	\subfloat[The line $m$ intersecting the quartic in four points.]{
		\includegraphics[width=0.41\textwidth]{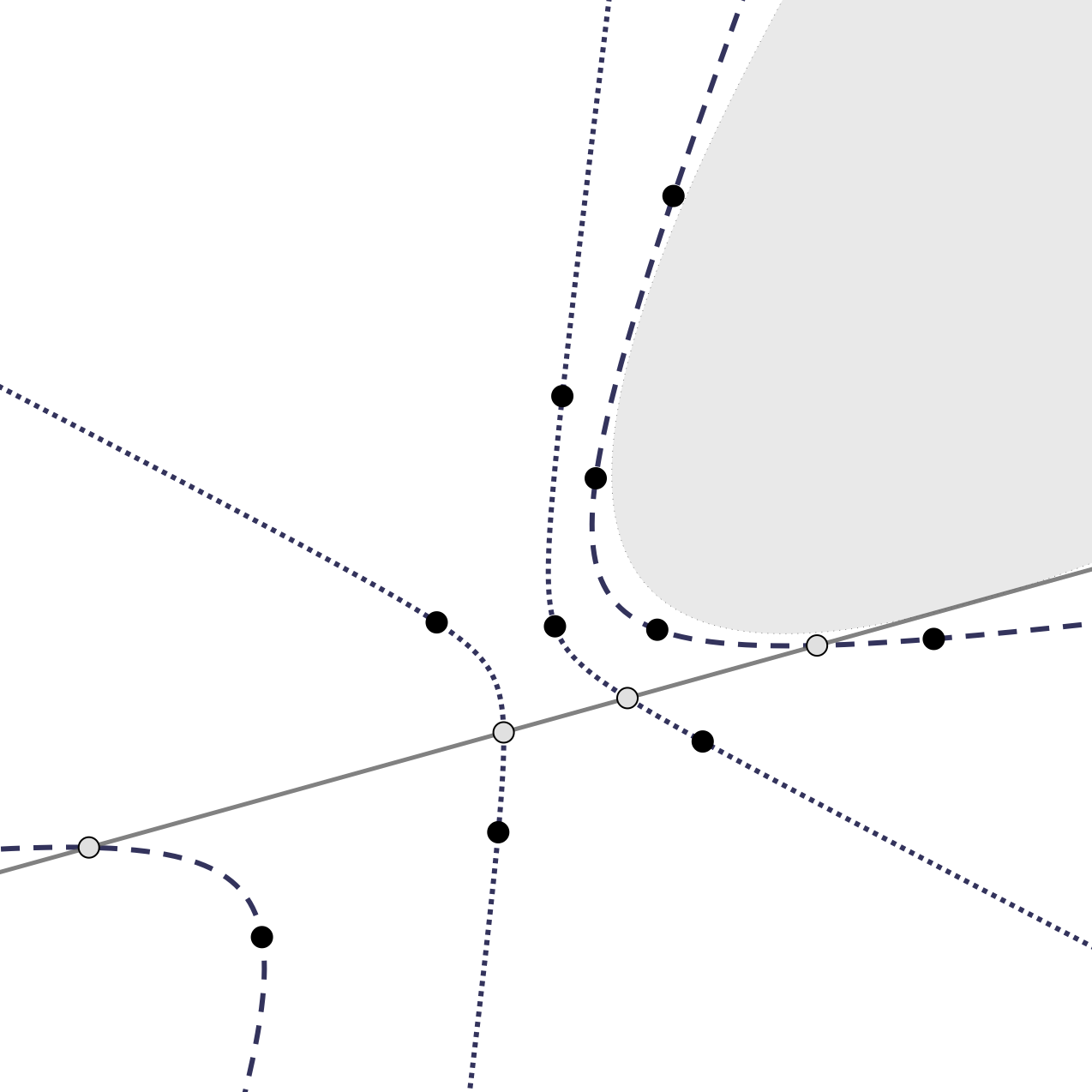}
		\label{fig:not-irred-0}}
	\qquad
	\subfloat[The contact star configuration $Y$.]{
		\includegraphics[width=0.41\textwidth]{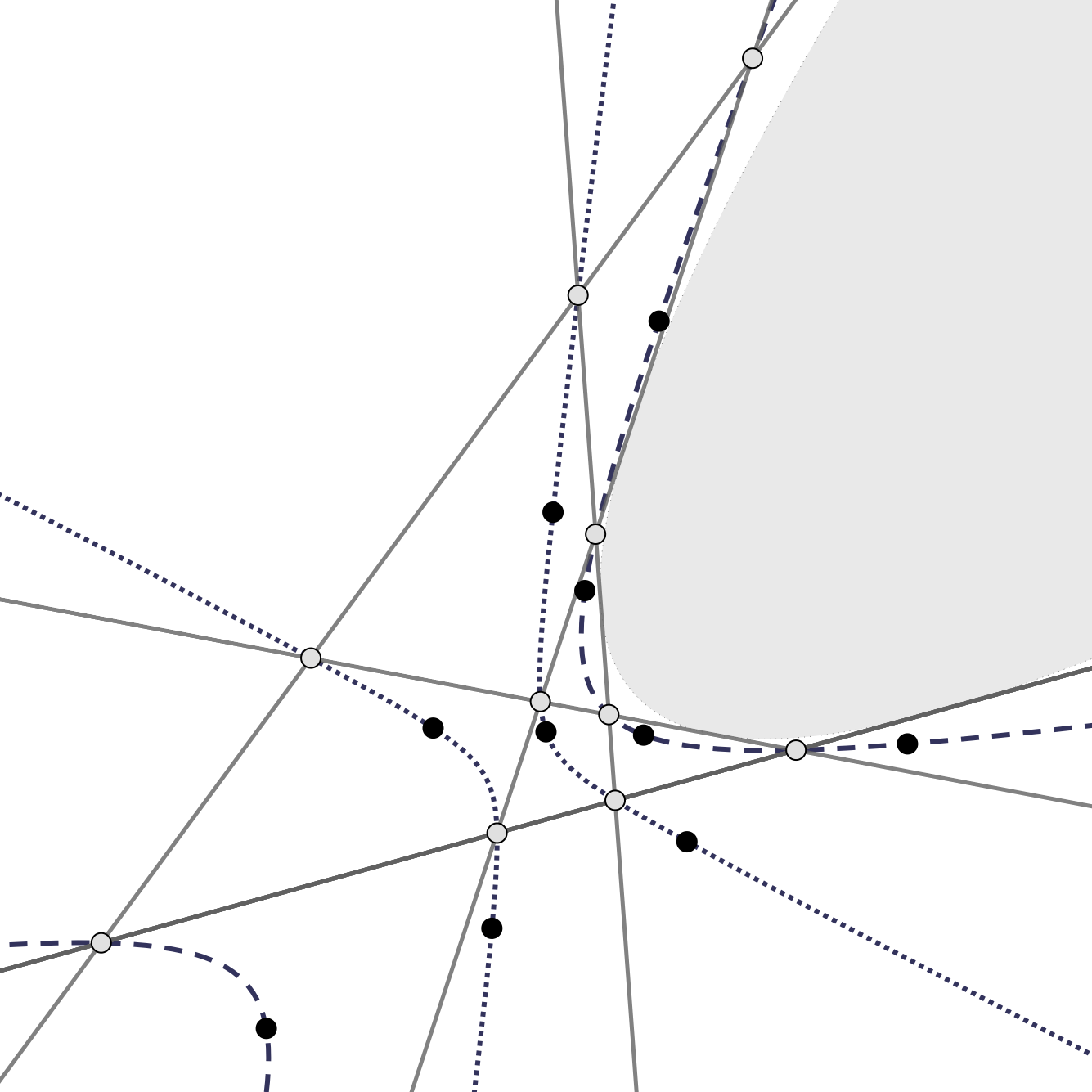}
		\label{fig:not-irred-1}}
	\caption{Construction of the set of points $X\cup Y$ on a reduced quartic.}
	\label{fig:globfig}
\end{figure}

\end{example}

In order to show that the condition $s=r$ or $s=r-1$ in  Theorem \ref{t.main} is also necessary for $X\cup Y$ to be a complete intersection, we prove the following lemma which holds with more general assumptions.
\begin{lemma}\label{l. necessary}
	Let $X$ and $Y$ be two disjoint star configurations defined by $r$ and $s$ lines, ($r\ge s$), respectively. If $X\cup Y$ is a complete intersection, then either $s=r$ or $s=r-1$.
\end{lemma}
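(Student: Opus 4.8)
The plan is to run a purely numerical argument driven by two facts recalled in the introduction: a star configuration of $k$ lines in $\mathbb P^2$ consists of $\binom{k}{2}$ points and has the generic $h$-vector $(1,2,\dots,k-1)$, so that its defining ideal vanishes in all degrees $\le k-2$; and a complete intersection of type $(a,b)$ in $\mathbb P^2$ consists of exactly $ab$ points. After possibly interchanging the two defining forms, I would write $X\cup Y$ as a complete intersection of type $(a,b)$ with $a\le b$ and $I_{X\cup Y}=(F,G)$, $\deg F=a$, $\deg G=b$. Since $X$ and $Y$ are disjoint, counting points gives
\[
ab=|X\cup Y|=\binom{r}{2}+\binom{s}{2}.
\]

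First I would pin down $a$. As $I_{X\cup Y}\subseteq I_X$ and $F$ is a nonzero element of $I_{X\cup Y}$ of least degree $a$, while $(I_X)_t=0$ for $t\le r-2$, we get $a\ge r-1$. On the other hand $a$ cannot be as large as $r$: if $a\ge r$, then using $b\ge a$ and the standing hypothesis $r\ge s$,
\[
ab\ge a^2\ge r^2> r(r-1)=2\binom{r}{2}\ge \binom{r}{2}+\binom{s}{2}=ab,
\]
a contradiction. Hence $a=r-1$.

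Finally I would extract the constraint on $s$: from $b\ge a=r-1$ we obtain $\binom{r}{2}+\binom{s}{2}=(r-1)b\ge (r-1)^2$, i.e. $r(r-1)+s(s-1)\ge 2(r-1)^2$, which simplifies to $s(s-1)\ge (r-1)(r-2)$. Since $t\mapsto t(t-1)$ is strictly increasing for $t\ge 1$, this forces $s\ge r-1$, and together with $s\le r$ we conclude $s\in\{r-1,r\}$, as desired.

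This is elementary bookkeeping with binomial coefficients, so I do not expect a genuine obstacle; the only points needing care are to justify the lower bound $a\ge r-1$ through the initial degrees of the nested ideals $I_{X\cup Y}\subseteq I_X$ (rather than taking it for granted), and to use disjointness honestly when writing $|X\cup Y|=\binom{r}{2}+\binom{s}{2}$. One should also note that repeating the argument with $Y$ in place of $X$ yields nothing new, since $r\ge s$ makes the bound coming from $X$ the binding one.
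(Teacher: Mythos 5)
Your argument is correct, and it is genuinely different from the one in the paper. The paper's proof is a one-liner via liaison: it feeds the generic $h$-vectors $h_X=(1,2,\dots,r-1)$ and $h_Y=(1,2,\dots,s-1)$ into formula \eqref{eq.Corollary 5.2.19} to get $h_{X\cup Y}=(1,2,\dots,r-1,s-1,\dots,2,1)$, and then invokes the symmetry of the $h$-vector of a complete intersection to force $s\in\{r-1,r\}$. You instead avoid liaison altogether and work only with two numerics: the initial degree of $I_X$ (which pins down the smaller degree $a=r-1$ of the complete intersection, via $I_{X\cup Y}\subseteq I_X$ together with the counting bound that rules out $a\ge r$) and the equality $ab=\binom{r}{2}+\binom{s}{2}$, from which $b\ge a$ squeezes out $s\ge r-1$. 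Each step checks out, including the edge case $r=2$ where $s\le r$ and $s\ge 2$ already force $s=r$. What the paper's route buys is brevity and extra information (it exhibits the full $h$-vector of $X\cup Y$, which is reused elsewhere, e.g.\ in Theorem \ref{t. h-vector 2 star }); what yours buys is independence from the liaison machinery, using only the facts about star configurations recalled in the introduction. Both hinge on disjointness of $X$ and $Y$ and on the generic $h$-vector of a single star configuration, so neither is more general than the other in any essential way.
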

\begin{proof}
	Since $h_{X}=(1,2,3, \ldots, r-1 )$ and $h_{Y}=(1,2,3, \ldots, s-1 ),$ 
	thus, by liaison, see formula~\eqref{eq.Corollary 5.2.19}, the $h$-vector of $X\cup Y$ must be
	$$ h_{X\cup Y}=(1,2,3, \ldots, r-1, s-1 , \ldots, 3,2,1  ).$$
	Since $X\cup Y$ is a complete intersection, then, to ensure the symmetry, we get $s=r$ or $s=r-1$.
\end{proof}

Theorem \ref{t.main} together with  Lemma \ref{l. necessary} give the following result. 
\begin{theorem}Let $X=\mathbb{S}(\ell_1,\ldots,\ell_r)$ and $Y=\mathbb{S}(m_1,\ldots,m_s)$ be two contact star configurations on the same conic.
	Then $X\cup Y$ is a complete intersection if and only if either $s=r$ or $s=r-1$.
\end{theorem}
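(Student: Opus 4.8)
The plan is to read this statement off from Theorem~\ref{t.main} and Lemma~\ref{l. necessary}; the only point that needs a short separate argument is that two contact star configurations on the same smooth conic are disjoint. First I would reduce to the case $r\ge s$: since $X\cup Y=Y\cup X$, interchanging the two configurations leaves the union, hence the property of being a complete intersection, unchanged, so without loss of generality $r\ge s$ and the claimed numerical condition reads $s\in\{r-1,r\}$.

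For the ``if'' direction, if $s=r-1$ then Theorem~\ref{t.main}(b) gives that $X\cup Y$ is a complete intersection of type $(r-1,r-1)$, and if $s=r$ then Theorem~\ref{t.main}(e) gives that $X\cup Y$ is a complete intersection of type $(r-1,r)$; nothing more is required here.

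For the ``only if'' direction, the first step is to check that $X\cap Y=\emptyset$. Let $\gamma$ be the common conic. Every point of $X$ has the form $P_{ij}=\ell_i\cap\ell_j$ with $i\neq j$, and $P_{ij}\notin\gamma$: otherwise $\ell_i$ and $\ell_j$ would both be the unique tangent line to $\gamma$ at $P_{ij}$, forcing $\ell_i=\ell_j$. Since through a point off a smooth conic there pass exactly two tangent lines, the unordered pair $\{\ell_i,\ell_j\}$ is determined by $P_{ij}$, and likewise each point of $Y$ determines a unique pair among $\{m_1,\dots,m_s\}$. Hence a point of $X\cap Y$ would force $\{\ell_i,\ell_j\}=\{m_k,m_l\}$ for some indices, contrary to the distinctness of the $r+s$ lines defining the two configurations. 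Therefore $X$ and $Y$ are disjoint star configurations defined by $r\ge s$ lines whose union is a complete intersection, and Lemma~\ref{l. necessary} yields $s=r$ or $s=r-1$.

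The only obstacle is the disjointness verification, and that in turn rests solely on the elementary fact that a point not on a smooth conic lies on exactly two of its tangent lines; everything else is a formal assembly of Theorem~\ref{t.main} and Lemma~\ref{l. necessary}.
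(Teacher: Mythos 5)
Your proof is correct and follows exactly the route the paper takes: the ``if'' direction is Theorem~\ref{t.main}(b) and (e), and the ``only if'' direction is Lemma~\ref{l. necessary}. Your explicit verification that $X$ and $Y$ are disjoint (via the fact that a point off a smooth conic lies on exactly two tangent lines, so each $P_{ij}$ determines its defining pair of lines) is a detail the paper leaves implicit, and it is a correct and worthwhile addition since Lemma~\ref{l. necessary} does require disjointness.
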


\section{The $h$-vector of union of contact star configurations in $\mathbb P^2$}\label{s.other unions}
In this section we work in $\mathbb P^2$, so $\gamma$ is always an irreducible conic, and we set $S=\mathbb C[x,y,z]=\mathbb C[\mathbb P^2]$.
The main result of this section, Theorem \ref{t. h-vector 2 star }, shows that a union of two contact star configurations on a conic $\gamma$ has 
the same $h$-vector of a scheme of two fat points in $\mathbb P^2$. This point of view will allows us to make further considerations on the $h$-vector of more than two contact star configurations on a conic $\gamma$, see Theorem \ref{t. 3 fat 3 had}.

The $h$-vector of two fat points is well known, several papers investigate it in a more general setting, see for instance \cite[Theorem 1.5 and Example 1.6]{fatabbi2001} and also \cite{ catalisano1991fat,dg1984, harbourne_1998, harbourne_2000} just to cite some of them. Recall that if the multiplicities of the two points are $m$ and $n$, with  $m\ge n$, then the $h$-vector is
\begin{equation}\label{eq. 2 fat points}
 (1,2, \ldots, m-1, m, n, n-1, \ldots, 2, 1).
\end{equation}

In order to prove the claimed result, we need the following lemma.

\begin{lemma}\label{l. h vector difference of two star}
	Let $Y=\mathbb S(m_1, \ldots, m_s)$ be a star configurations of $s$ lines, and let $m_{s+1}, \ldots, m_{s+t}$ be $t$ further lines,  $t\ge 1$. 	Set $X=\mathbb S(m_1, \ldots, m_s, m_{s+1}, \ldots, m_{s+t})$.
 Then, the $h$-vector of $X\setminus Y$ is
	$$h_{X\setminus Y}=(1, \ldots, t-1,  \underbrace{t , t, \ldots,  t}_{s}).$$
\end{lemma}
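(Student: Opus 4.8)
The plan is to compute the $h$-vector of $X \setminus Y$ by a liaison argument. Write $W = X \setminus Y$, so that $X = W \sqcup Y$ as a set of reduced points. The star configuration $X$ is defined by $s+t$ lines, so it is a complete intersection of two curves; more precisely its $h$-vector is generic, $h_X = (1, 2, \ldots, s+t-1)$, and $X$ is linked to $W$ inside a suitable arithmetically Gorenstein scheme. The cleanest choice is to link $W$ and $Y$ directly: I want to realize $X$ as a complete intersection containing $Y$, or failing that, to iterate. Actually the simplest route is to observe that $X$ itself need not be a complete intersection when $s+t$ is large, so instead I will peel off the extra lines one at a time or use the standard fact that a star configuration on $k$ lines is ACM with the generic $h$-vector, and then apply formula \eqref{eq.Corollary 5.2.19} after embedding everything in a complete intersection of two of the defining lines' products.

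**The key steps.** First I would record that $|X| = \binom{s+t}{2}$, $|Y| = \binom{s}{2}$, hence $|W| = \binom{s+t}{2} - \binom{s}{2} = \binom{t}{2} + st$. Next, among the lines $m_1, \dots, m_{s+t}$ pick the two product forms $G = m_1 \cdots m_{s+t-1}$ and, say, a second generator; in fact $X$ is the complete intersection of a degree-$(s+t-1)$ curve and... no — here is the point where one must be careful: a star configuration on $k \geq 4$ lines in $\mathbb{P}^2$ is \emph{not} a complete intersection in general, it is the codimension-$2$ Cohen--Macaulay scheme whose ideal is generated in degree $k-1$ by the $\binom{k}{k-1}$ products of $k-1$ of the forms. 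So rather than linking through $X$, I would link $Y$ through a complete intersection: embed $Y$ in the complete intersection $V$ of $m_1 \cdots m_{s-1}$ (degree $s-1$) and $m_2 \cdots m_s$ (degree $s-1$)? That does not contain $Y$ either. The honest fix: induct on $t$. For $t = 1$, $X = \mathbb{S}(m_1,\dots,m_s,m_{s+1})$ and $W = X \setminus Y$ consists of the $s$ points $m_i \cap m_{s+1}$, which are collinear (they lie on $m_{s+1}$), so $h_W = (1,1,\dots,1)$ with $s$ entries — matching $(t,\dots,t)$ with $t=1$ and no initial segment, good. For the inductive step from $t$ to $t+1$, write $X' = \mathbb{S}(m_1,\dots,m_{s+t+1})$ and note $X' \setminus Y = (X \setminus Y) \sqcup (X' \setminus X)$, where $X' \setminus X$ is the set of $s+t$ points $m_i \cap m_{s+t+1}$ for $i \le s+t$, all lying on the line $m_{s+t+1}$. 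So $W' = W \sqcup (\text{$s+t$ collinear points on a new line})$.

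**Finishing via the $h$-vector of adding collinear points.** The remaining task is: given a set $W$ of reduced points with $h_W = (1,\dots,t-1,t,\dots,t)$ ($s$ copies of $t$), adjoin $s+t$ general-enough points on a line $L$ not meeting $W$ in a way that forces the new $h$-vector to be $(1,\dots,t,t+1,\dots,t+1)$ ($s$ copies of $t+1$). I would use the standard "adding points on a line" lemma: if $L$ meets $W$ in the expected number of points (here zero, or more generally one controls $\deg(W \cap L)$), then $h_{W \cup (L\text{-points})}$ is obtained from $h_W$ by adding $1$ to the first several entries — precisely, the new points on $L$ contribute a constant sequence and the sum behaves additively once the line is "generic" with respect to $W$. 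Concretely, $h_{W'}(i) = h_W(i) + \#\{\text{points of }W' \cap L\text{ imposed in degree }i\}$ in the range where $L$ is not yet saturated; since we add $s+t$ points and $h_W$ has length $s + (t-1) = s+t-1$, one checks the arithmetic works out to exactly the claimed profile. The main obstacle is verifying that the new line $m_{s+t+1}$ is \emph{in sufficiently general position} relative to $W'$ — i.e. that the $s+t$ new collinear points do not fail to impose independent conditions on small-degree curves through $W$. Here the contact/star-configuration structure is not actually needed for this lemma (the statement is purely combinatorial about reduced points), but one does need that $m_{s+t+1}$ contains none of the previously constructed points, which is clear since those points are intersections $m_i \cap m_j$ with $i,j \le s+t$ and the lines meet properly. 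I expect the "adding collinear points" step and its general-position hypothesis to be the only real content; everything else is bookkeeping with \eqref{eq.Corollary 5.2.19} and the known generic $h$-vector of star configurations.
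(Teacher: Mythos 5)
There is a genuine gap at the heart of your argument. The inductive skeleton is sound: the base case $t=1$ (the $s$ points $m_i\cap m_{s+1}$ are collinear, so $h_W=(1,\ldots,1)$) is correct, and so is the decomposition $X'\setminus Y=(X\setminus Y)\sqcup(X'\setminus X)$ with $X'\setminus X$ a set of $s+t$ points on the new line. But the entire inductive step is delegated to an ``adding points on a line'' lemma that you neither state precisely nor verify, and you yourself flag it as ``the only real content.'' That lemma is not automatic: adjoining $k$ points on a line $L$ disjoint from $W$ does \emph{not} in general transform $h_W$ by the naive additive rule you write down. From the exact sequence $0\to I_{W}(-1)\xrightarrow{\cdot \ell} I_{W'}\to I_{D|L}\to 0$ one only gets the inequality $H_{W'}(d)\le H_W(d-1)+\min(d+1,s+t)$; turning it into an equality in every degree requires showing that forms through $W'$ restrict onto $L$ as expected (equivalently, that the new collinear points impose the right number of conditions modulo curves through $W$), which is precisely the geometric input still missing for the specific, non-generic line $m_{s+t+1}$. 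As written, the proof does not close.

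For comparison, the paper avoids the induction entirely with a squeeze argument you almost have in hand. You already computed $|X\setminus Y|=\binom{t}{2}+st$, which is exactly the sum of the entries of the claimed $h$-vector. The two missing observations are: (1) $X\setminus Y$ lies on the curve $m_{s+1}\cup\cdots\cup m_{s+t}$ of degree $t$ (every point of $X\setminus Y$ involves at least one of the new lines), so $h_{X\setminus Y}(i)\le\min(i+1,t)$ for all $i$; and (2) $X\setminus Y\subseteq X$ and $h_X=(1,2,\ldots,s+t-1)$ is the generic $h$-vector of a star configuration on $s+t$ lines, so $h_{X\setminus Y}\le h_X$ termwise and in particular vanishes in degrees $\ge s+t-1$. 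The resulting termwise upper bound is exactly $(1,\ldots,t-1,t,\ldots,t)$ with $s$ copies of $t$, whose sum equals $|X\setminus Y|$; hence equality holds everywhere. If you want to salvage your induction instead, you must actually prove the restriction/independence statement for the line $m_{s+t+1}$ — but the degree-$t$ curve bound makes that unnecessary.
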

\begin{proof}
	Since the $h$-vector of $X$ is 	$(1,2, \ldots, s+t-1)$ and $X\setminus Y$ is contained in a curve of degree~$t$, that is $m_{t+1}\cup \cdots\cup m_{t+s}$, then 
	$h_{X\setminus Y}\le(1, \ldots, t-1,  \underbrace{t , t, \ldots,  t}_{s})\le h_X.$
	Since, 
	$$|X\setminus Y|=\binom{s+t}{2}- \binom{s}{2} =\binom{t}{2}+st=1+ \cdots+ t-1+  \underbrace{t+ t+ \cdots+  t}_{s},	$$
	we are done.   
\end{proof}

The next example shows how we compute the $h$-vector of a union of two  contact star configurations in $\mathbb P^2$.
\begin{example}\label{ex.3-7}
	Let $X=\mathbb S( \ell_1,\ldots,\ell_7 )$ and $Y=\mathbb S( m_1,m_2,m_3 )$ be two  contact star configurations on a conic $\gamma$. Figure \ref{fig:3-7-base} gives a representation of this case. 

	 	
	In order to compute the $h$-vector of $X\cup Y$ we consider three further lines $m_4,m_5,m_6$ tangent to $\gamma.$ Let $Y'=\mathbb S({m_1,\ldots,m_6})\supseteq Y$,  see  Figure \ref{fig:3-7}.
	
		\begin{figure}[h]
	\centering
	\subfloat[The set $X\cup Y$.]{
		\includegraphics[width=.42\linewidth]{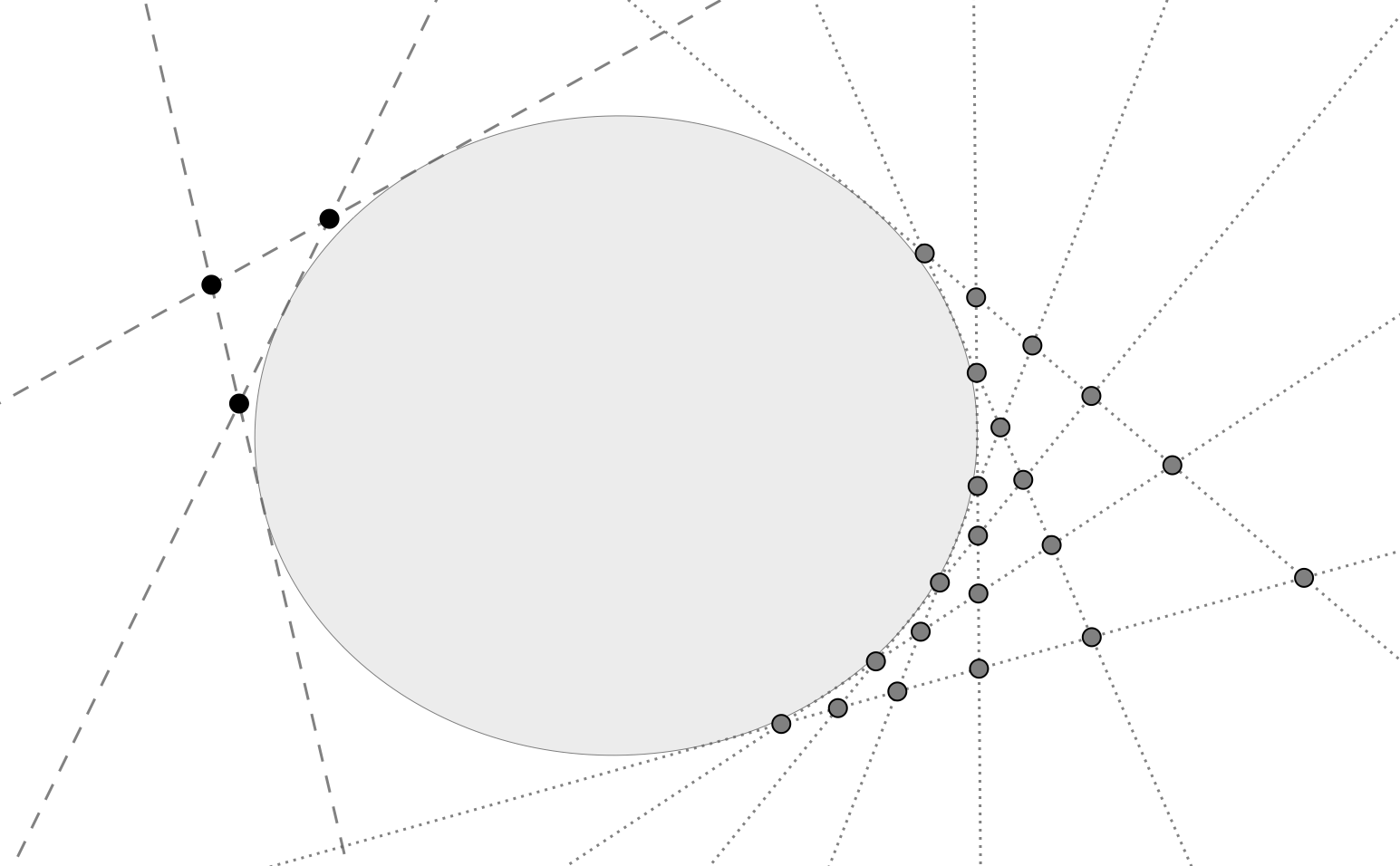}			\label{fig:3-7-base}}		\qquad
	\subfloat[The set $X\cup Y'$.]{
		\includegraphics[width=.49\linewidth]{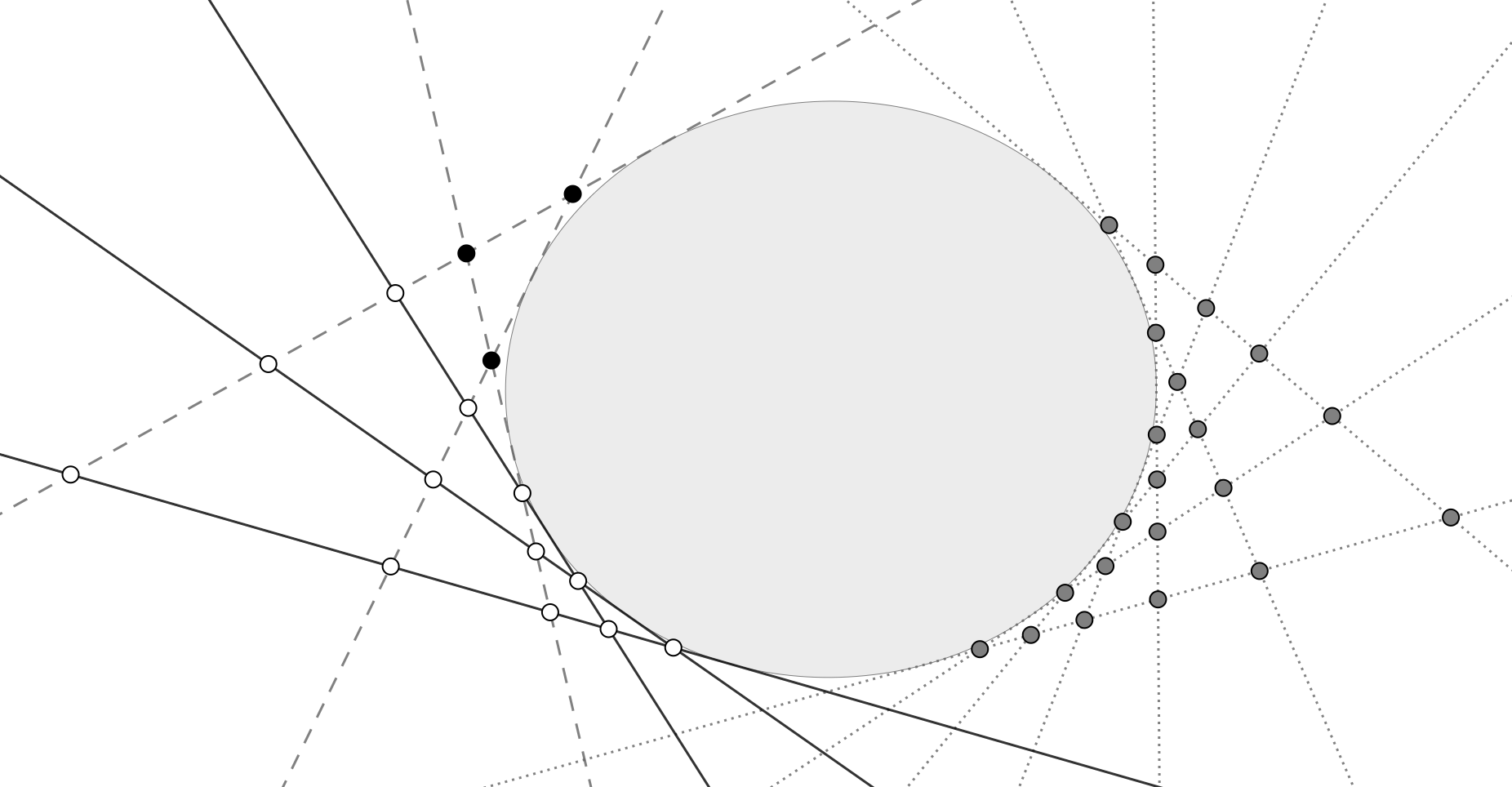}
		\label{fig:3-7}}
	\caption{ Example \ref{ex.3-7}.}
	\label{fig:ex.3-7}
\end{figure}

	From Theorem \ref{t.main}  (b), $X\cup Y'$ is a complete intersection of type $(6,6)$, hence the $h$-vector of $X\cup Y'$ is $h_{X\cup Y'}=(1,2,3,4,5,6,5,4,3,2,1)$. By Lemma \ref{l. necessary}, the $h$-vector of $Y'\setminus Y$ is  $h_{Y'\setminus Y}=(1,2,3,3,3)$. 
Thus, by formula \eqref{eq.Corollary 5.2.19}, we get
\[	\begin{array}{l|cccccccccccccccccc}
	t& 0& 1 & 2 & 3 & 4 & 5 & 6 & 7& 8& 9& 10\\
	\hline
	h_{X\cup Y'}(t) & 1 & 2 & 3& 4& 5& 6& 5& 4& 3 & 2& 1\\
	h_{Y'\setminus Y}(10-t) &   &   &  &  &  &  & 3& 3& 3 & 2& 1  \\
	h_{X\cup Y}(t) & 1 & 2 & 3& 4& 5& 6& 2& 1\\
\end{array}
\]	
which shows that  $X\cup Y$ has the same $h$-vector of a scheme of two  fat points of multiplicity $2$ and $6$, see  \eqref{eq. 2 fat points}.
\end{example}

Now we state the  general theorem.
\begin{theorem}\label{t. h-vector 2 star }
	Let $X=\mathbb{S}(\ell_1,\ldots,\ell_r)$ and $Y=\mathbb{S}(m_1,\ldots,m_s)$ be two contact star configurations on a conic.
 Let $P, Q$ be two distinct points in $\mathbb P^2.$ Then
    \[
    h_{X\cup Y} = h_{(r-1)P+(s-1)Q}. 
    \]
\end{theorem}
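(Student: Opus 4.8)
The plan is to imitate Example \ref{ex.3-7}: enlarge $Y$ to a contact star configuration $Y'$ whose union with $X$ is a complete intersection (via Theorem \ref{t.main}), and then strip the surplus points off by liaison. By the symmetry of the statement in $X$ and $Y$ I may assume $r\ge s$; note that by \eqref{eq. 2 fat points} with $m=r-1$ and $n=s-1$ the target $h$-vector of $(r-1)P+(s-1)Q$ is $(1,2,\dots,r-2,\,r-1,\,s-1,s-2,\dots,2,1)$. The boundary cases $s=r$ and $s=r-1$ are immediate, since there Theorem \ref{t.main}(e),(b) makes $X\cup Y$ a complete intersection of type $(r-1,r)$ or $(r-1,r-1)$, whose $h$-vector one reads off directly to be the vector just displayed. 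So assume from now on $s\le r-2$, and put $u=r-1-s\ge 1$.

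Next I would choose $u$ further points on $\gamma$ in sufficiently general position, with osculating lines $m_{s+1},\dots,m_{r-1}$, so that $Y'=\mathbb S(m_1,\dots,m_{r-1})$ is again a contact star configuration on $\gamma$, disjoint from $X$, and containing $Y$ --- the last being automatic, since a point $m_i\cap m_j$ with $i,j\le s$ already lies in $Y'$. By Theorem \ref{t.main}(b) applied to $X$ and $Y'$, the set $X\cup Y'$ is a complete intersection of type $(r-1,r-1)$, so
\[
h_{X\cup Y'}=(1,2,\dots,r-2,\,r-1,\,r-2,\dots,2,1),
\]
supported in degrees $0,\dots,2r-4$, while Lemma \ref{l. h vector difference of two star} applied to $Y\subseteq Y'$ (with its $t$ equal to $u$) gives
\[
h_{Y'\setminus Y}=\bigl(1,2,\dots,u-1,\underbrace{u,u,\dots,u}_{s}\bigr),
\]
supported in degrees $0,\dots,r-3$.

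Finally I would feed the disjoint pair $(X\cup Y,\ Y'\setminus Y)$, linked inside the complete intersection $X\cup Y'$ of type $(r-1,r-1)$, into the liaison formula \eqref{eq.Corollary 5.2.19}, which gives $h_{X\cup Y}(t)=h_{X\cup Y'}(t)-h_{Y'\setminus Y}(2r-4-t)$ for every $t$. Carrying out the bookkeeping --- the flat block of $h_{Y'\setminus Y}$ lands, under $t\mapsto 2r-4-t$, on degrees $r-1,\dots,r+s-2$ and there converts the symmetric descent of $h_{X\cup Y'}$ into $s-1,s-2,\dots,1,0$, while the ramp of $h_{Y'\setminus Y}$ lands on degrees $r+s-1,\dots,2r-4$ and there cancels the remaining descent of $h_{X\cup Y'}$ --- one obtains $h_{X\cup Y}=(1,2,\dots,r-2,\,r-1,\,s-1,s-2,\dots,2,1)$, which by \eqref{eq. 2 fat points} is exactly $h_{(r-1)P+(s-1)Q}$.

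The two substantive ingredients, Theorem \ref{t.main}(b) and Lemma \ref{l. h vector difference of two star}, are already available, so the work that remains is the arithmetic of the last step together with the genericity check in the middle step; I expect the latter --- arranging that the enlarged set of tangent lines simultaneously keeps $Y'$ a star configuration and makes $X\cap Y'=\emptyset$, avoiding the finitely many forbidden positions --- to be the only place needing a careful (if routine) argument, and I would also verify that the small degenerate cases such as $s=2$ (where $(s-1)Q$ is a reduced point) and $r=s$ are caught by the case split above rather than slipping through.
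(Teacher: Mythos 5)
Your proposal is correct and follows essentially the same route as the paper: enlarge $Y$ to a contact star configuration $Y'$ on $r-1$ tangent lines, apply Theorem \ref{t.main}(b) to get the complete intersection $X\cup Y'$ of type $(r-1,r-1)$, compute $h_{Y'\setminus Y}$ via Lemma \ref{l. h vector difference of two star}, and link via formula \eqref{eq.Corollary 5.2.19}. (The genericity/disjointness worry is automatic here, since distinct tangent lines to an irreducible conic always meet properly.)
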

\begin{proof}
Assume $s\le r$. If either $s=r$ or $s=r-1$, then the statement follows from Theorem \ref{t.main}. In fact, in both the cases, $X\cup Y$ is a complete intersection and it has the required $h$-vector. 

So, assume $s<r-1$ and let $t=r-s-1$. Consider $t$ further lines, $m_{s+1},\ldots,m_{s+t}$,  tangent to $\gamma$, and denote by $Y'=\mathbb S(m_1, \ldots , m_{s+t})$. 
From Theorem \ref{t.main} (b),   $X\cup Y'$ is a complete intersection of type $(t+s,t+s)$ and its $h$-vector is
\[
h_{X\cup Y'}=(1,2,\ldots, t+s-1,t+s, t+s-1, \ldots, 2,1 ).
\] 

By Lemma \ref{l. necessary}, the $h$-vector of $Y'\setminus Y$  is  
$$h_Z=(1,2,\ldots, t-1, \underbrace{t,t, \ldots, t}_{s}).$$
By formula \eqref{eq.Corollary 5.2.19}, we get 
$$\begin{array}{lll}
h_{X\cup Y}&=&(1,2,\ldots, t+s-1,t+s, s-1, s-2, \ldots, 2, 1)=\\
&=&(1,2,\ldots, r-2,r-1, s-1, s-2, \ldots, 2, 1)
\end{array}$$
which is the $h$-vector of $(r-1)P+(s-1)Q$. 
\end{proof}

Theorem \ref{t.main}  allows us to compute the $h$-vector of a union of three contact star configurations, on a same conic in the special case described in Theorem  \ref{t. 3 fat 3 had}. The proof of Theorem  \ref{t. 3 fat 3 had} requires the following well known result about the Hilbert function of a scheme of three fat points not lying  on a line  (see for instance \cite[Theorem 3.1]{catalisano1991fat}). 
\begin{proposition}[The Hilbert function of three fat points]\label{p. 3 fat}
	Let $Z=m_1P_1+m_2P_2+m_3P_3$ be a scheme of three general fat points of multiplicity $m_1\ge m_2\ge m_3\ge 1$, respectively. Then 
	\[
	H_Z(d)=\left\{\begin{array}{lll}
	d+1+H_Z'(d-1)& if & 0 \le  d \le m_1+m_2-2\\ 
	\deg(Z) & if & d \ge m_1+m_2-1\\
	\end{array}\right.
	\]
	where $Z'=(m_1-1)P_1+(m_2-1)P_2+m_3P_3.$ 
\end{proposition}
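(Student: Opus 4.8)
The plan is to prove the proposition by induction on the total multiplicity $m_1+m_2+m_3$, via the standard residuation exact sequence with respect to a line through two of the points. Since $\mathrm{PGL}_3(\mathbb C)$ acts transitively on triples of non-collinear points, ``general'' here just means ``not collinear'' and $H_Z$ depends only on the multiset $\{m_1,m_2,m_3\}$; the induction will bottom out at schemes supported on at most two of the three points, where I would invoke the classical description of the Hilbert function of $\le 2$ fat points (in the form: a single fat point $nP$ has $H^1(\mathcal I_{nP}(e))=0$ for $e\ge n-1$, and a pair $n_1P_1+n_2P_2$ has $H^1(\mathcal I(e))=0$ for $e\ge n_1+n_2-1$).

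First I would fix the line $L=\overline{P_1P_2}$, so that $P_3\notin L$, and observe that the residual scheme is $\operatorname{Res}_L(Z)=Z'=(m_1-1)P_1+(m_2-1)P_2+m_3P_3$, while the scheme-theoretic trace $Z\cap L$ is the degree-$(m_1+m_2)$ divisor $m_1P_1+m_2P_2$ on $L\cong\mathbb P^1$, so that $\mathcal I_{Z\cap L,L}(d)\cong\mathcal O_{\mathbb P^1}(d-m_1-m_2)$. From the exact sequence $0\to\mathcal I_{Z'}(d-1)\to\mathcal I_Z(d)\to\mathcal O_{\mathbb P^1}(d-m_1-m_2)\to 0$ I extract the two exact pieces
\[0\to (I_{Z'})_{d-1}\to (I_Z)_d\to H^0\big(\mathbb P^1,\mathcal O(d-m_1-m_2)\big)\]
and
\[H^1(\mathcal I_{Z'}(d-1))\to H^1(\mathcal I_Z(d))\to H^1\big(\mathbb P^1,\mathcal O(d-m_1-m_2)\big)\to 0,\]
the final surjectivity coming from $H^2(\mathcal I_{Z'}(d-1))=0$ for $d\ge 0$ (immediate from $0\to\mathcal I_{Z'}\to\mathcal O_{\mathbb P^2}\to\mathcal O_{Z'}\to 0$).

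In the range $0\le d\le m_1+m_2-2$ one has $d-m_1-m_2\le -1$, so $H^0(\mathbb P^1,\mathcal O(d-m_1-m_2))=0$ and the first sequence gives $(I_{Z'})_{d-1}\cong (I_Z)_d$; comparing $H_Z(d)=\binom{d+2}{2}-\dim(I_Z)_d$ and $H_{Z'}(d-1)=\binom{d+1}{2}-\dim(I_{Z'})_{d-1}$ and using $\binom{d+2}{2}-\binom{d+1}{2}=d+1$ yields $H_Z(d)=(d+1)+H_{Z'}(d-1)$, with no appeal to the induction. In the range $d\ge m_1+m_2-1$ one has $d-m_1-m_2\ge -1$, so $H^1(\mathbb P^1,\mathcal O(d-m_1-m_2))=0$ and the second sequence makes $H^1(\mathcal I_Z(d))$ a quotient of $H^1(\mathcal I_{Z'}(d-1))$. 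Writing $\mu_1\ge\mu_2\ge\mu_3$ for the reordering of $\{m_1-1,\,m_2-1,\,m_3\}$ and applying the inductive hypothesis to $Z'$ (total multiplicity $m_1+m_2+m_3-2$), we get $H^1(\mathcal I_{Z'}(e))=0$ whenever $e\ge\mu_1+\mu_2-1$; thus it suffices to check $\mu_1+\mu_2\le m_1+m_2-1$. A short case analysis does this: if $m_2>m_3$ then $\mu_1+\mu_2=(m_1-1)+(m_2-1)$; if $m_2=m_3<m_1$ then $\mu_1+\mu_2=(m_1-1)+m_3$; if $m_1=m_2=m_3$ then $\mu_1+\mu_2=m_1+(m_1-1)$; in each case the sum is $\le m_1+m_2-1\le d$. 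Hence $H^1(\mathcal I_Z(d))=0$, i.e.\ $H_Z(d)=\deg Z$, which closes the induction.

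I expect the only genuinely fiddly point to be the bookkeeping in the inductive step: making sure the reordering of the three multiplicities of $Z'$ never pushes its regularity bound $\mu_1+\mu_2-1$ above $d-1$, and handling cleanly the degenerate cases $m_2-1=0$ (then $Z'$ is supported on two points and one falls back on the two-fat-point case, whose bound $(m_1-1)+1-1=m_1-1$ still fits the pattern). The homological skeleton --- residuation plus the vanishing of $H^i(\mathbb P^1,\mathcal O(k))$ for $k\ge -1$ --- is entirely standard; one may also note $\deg Z=\deg Z'+(m_1+m_2)$, which makes the two cases of the formula agree at the overlap $d=m_1+m_2-1$.
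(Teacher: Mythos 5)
Your proof is correct. Note that the paper itself gives no proof of this proposition: it is quoted as a known result with a pointer to \cite[Theorem 3.1]{catalisano1991fat}, so there is no internal argument to compare against. What you write is the standard residuation (Castelnuovo exact sequence) proof with respect to the line $L=\overline{P_1P_2}$, and all the steps check out: the identification of $\operatorname{Res}_L(Z)$ with $Z'$ and of the trace with a degree-$(m_1+m_2)$ divisor on $L\cong\mathbb P^1$, the vanishing of $H^0(\mathcal O_{\mathbb P^1}(d-m_1-m_2))$ in the first range and of $H^1$ in the second, and the case analysis showing that the reordered multiplicities of $Z'$ satisfy $\mu_1+\mu_2\le m_1+m_2-1$, so the inductive vanishing of $H^1(\mathcal I_{Z'}(d-1))$ applies. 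The only point requiring care is the one you already flagged, namely that when $m_2=1$ the scheme $Z'$ drops to at most two supporting points and the induction must be seeded with the classical one- and two-fat-point statements; with that in place the argument is complete.
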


\begin{theorem}\label{t. 3 fat 3 had}Let $X=\mathbb S(\ell_1 \ldots, \ell_{r})$, $Y=\mathbb S(m_1, \ldots, m_s)$ and $W=\mathbb S(n_1, \ldots, n_{t})$ be contact star configurations on a conic, where $t\ge r\ge s\ge 2$.
	Let $Z= (t-1)P_1+(r-1)P_2+(s-1)P_3$ be a scheme of three general fat points.
	If $t\in \{ r+s-1, r+s, r+s+1 \},$ then 
	$$h_{X\cup Y \cup W} = h_Z$$
	\begin{equation}\label{eq. h vec Z}
	=(1,\ldots, t-2,\ t-1, \underbrace{r+s-2,\ r+s-4,  \ldots, r-s+2}_{ s -1},\ \underbrace{r-s, r-s-1, r-s-2, \ldots }_{r-s}).
	\end{equation}
\end{theorem}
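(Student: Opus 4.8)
The strategy is to reduce the three-configuration case to the two-configuration case (Theorem \ref{t. h-vector 2 star }) by a liaison argument, exactly as Example \ref{ex.3-7} and the proof of Theorem \ref{t. h-vector 2 star } reduce the two-configuration case to a complete intersection. First I would add $t$ further lines $n_{t+1},\ldots, n_{2t}$ tangent to $\gamma$ (or however many are needed) and set $W' = \mathbb S(n_1,\ldots,n_{2t})$, or more economically choose the number of added lines so that $X\cup Y\cup W'$ becomes a complete intersection via Theorem \ref{t.main}; the hypothesis $t\in\{r+s-1, r+s, r+s+1\}$ is precisely the numerical condition that makes such a completion to a complete intersection of type $(a-1,a)$ or $(a-1,a-1)$ possible, where one takes $a$ so that the union $X\cup Y$ plays the role of a single star configuration of the right size. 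Concretely: $X\cup Y$ has $h$-vector $(1,2,\ldots,r-1, s-1, \ldots, 2,1)$ by Theorem \ref{t. h-vector 2 star }, and I would look for lines completing $W$ to $W'$ so that $X\cup Y\cup W'$ is forced by Theorem \ref{t.main}(b) or (e) to be a complete intersection of the appropriate type (here the three values of $t$ correspond to the three cases $s'=r'$, $s'=r'-1$ matched up correctly, plus the boundary).

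Second, I would compute $h_{W'\setminus W}$. Since $W'\setminus W = \mathbb S(n_1,\ldots,n_{2t})\setminus \mathbb S(n_1,\ldots,n_t)$ is a difference of two star configurations, Lemma \ref{l. h vector difference of two star} gives its $h$-vector explicitly as $(1,2,\ldots, t-1, \underbrace{t,\ldots,t}_{t})$ (or with the appropriate number of added lines in place of $t$). Third, apply the liaison formula \eqref{eq.Corollary 5.2.19} to the pair $(X\cup Y\cup W,\ W'\setminus W)$ inside the complete intersection $X\cup Y\cup W'$: this yields $h_{X\cup Y\cup W}(d) = h_{X\cup Y\cup W'}(d) - h_{W'\setminus W}(N-d)$ for the appropriate socle degree $N$, and since the first two $h$-vectors are known explicitly this is a finite computation producing the claimed vector in \eqref{eq. h vec Z}.

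Finally, I must check that this computed $h$-vector equals $h_Z$ for $Z = (t-1)P_1 + (r-1)P_2 + (s-1)P_3$ a general scheme of three fat points. Here I would invoke Proposition \ref{p. 3 fat}: apply it with $m_1 = t-1$, $m_2 = r-1$, $m_3 = s-1$, peeling off one layer to reduce to $Z' = (t-2)P_1 + (r-2)P_2 + (s-1)P_3$, and either iterate or directly verify that the resulting first-difference function matches \eqref{eq. h vec Z}. The key point making the numerics work is the constraint $t\in\{r+s-1,r+s,r+s+1\}$, which ensures $m_1+m_2-1 = t+r-2$ is in the right range relative to $\deg Z$ so that the two regimes in Proposition \ref{p. 3 fat} align with the two blocks of \eqref{eq. h vec Z} (the arithmetic-progression-with-step-2 block of length $s-1$, then the descending block of length $r-s$).

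The main obstacle I anticipate is the first step: verifying that one can actually choose the extra tangent lines so that $X\cup Y\cup W'$ is a complete intersection \emph{and} that Theorem \ref{t.main} applies to it. Theorem \ref{t.main} is stated for a union of exactly \emph{two} contact star configurations, so I would need $X\cup Y$ to itself be realizable as a single contact star configuration on $\gamma$ — which is false in general — or, more plausibly, I would need a variant argument: treat $Y\cup W'$ as one object whose $h$-vector is already known (two-star case) and show $X\cup (Y\cup W')$ is a complete intersection by the same Bertini-plus-counting argument used in the proof of Theorem \ref{t.main}, checking that the relevant linear systems of curves through the union have no fixed component and are not composite with a pencil. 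Making that reduction precise, and correctly bookkeeping which of the three values of $t$ triggers which case, is where the real work lies; the fat-point comparison at the end is then routine via Proposition \ref{p. 3 fat}.
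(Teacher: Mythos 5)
There is a genuine gap, and you have correctly located it yourself: your completion strategy adds tangent lines to $W$ to form $W'$ and then needs $X\cup Y\cup W'$ to be a complete intersection, but Theorem \ref{t.main} only applies to a union of \emph{two} contact star configurations, and $X\cup Y\cup W'$ is a union of three. Your fallback (``treat $Y\cup W'$ as one object and rerun the Bertini-plus-counting argument'') is not carried out and would amount to reproving a strictly stronger version of Theorem \ref{t.main}; as written the reduction does not go through. The paper's resolution is the opposite move: instead of enlarging $W$, enlarge $X\cup Y$. Namely, $X\cup Y$ sits inside the single contact star configuration $T=\mathbb S(\ell_1,\ldots,\ell_r,m_1,\ldots,m_s)$ determined by all $r+s$ tangent lines together. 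Then $T\cup W$ is a union of just \emph{two} contact star configurations, defined by $r+s$ and $t$ lines, and the hypothesis $t\in\{r+s-1,r+s,r+s+1\}$ is exactly the condition under which Theorem \ref{t.main} makes $T\cup W$ a complete intersection (of type $(r+s-1,r+s-1)$, $(r+s-1,r+s)$, or $(r+s,r+s)$ respectively). The residual scheme is $T\setminus(X\cup Y)$, which is the grid $\{\ell_i\cap m_j\}$, i.e.\ the complete intersection of $\ell_1\cdots\ell_r$ and $m_1\cdots m_s$, of type $(r,s)$ with known $h$-vector; one does not need Lemma \ref{l. h vector difference of two star} at all. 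Liaison via \eqref{eq.Corollary 5.2.19} inside $T\cup W$ then yields $h_{X\cup Y\cup W}$.

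A secondary point: the comparison with $h_Z$ is not as routine as you suggest. The paper proves $h_Z$ equals the displayed vector by induction on the sum of the multiplicities, peeling off one layer via Proposition \ref{p. 3 fat}, and the case $r=s$ requires a separate argument because the line $P_1P_3$ becomes a fixed component of the linear system in the critical degree, so one must pass to a further residual $Z''$ before the proposition applies. This is a finite amount of work, but it is a genuine case analysis, not a direct verification.
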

\begin{proof}	
	We prove the theorem for $t=r+s.$ The other two
	cases can be proved similarly.
	
	First we compute the $h$-vector of $X\cup Y \cup W$.  
	
	Note that $X\cup Y$ is contained in the contact star configuration $T= \mathbb S(m_1,\ldots, m_r, n_1, \ldots, n_s)$. 
	Then $ T\cup W$, by Theorem \ref{t.main} (d), is a complete intersection of type $(r+s-1,r+s),$ and so
	$$h_{T\cup W}=(1,\ldots, r+s-2,\ r+s-1,\ r+s-1,\ r+s-2, \ldots, 1).$$
	
	Since the set $T\setminus (X\cup Y)$  is a complete intersection of type $(r,s)$, then its $h$-vector is 
	$$(1,\ldots,s-1,\ \underbrace{s,\ \ldots, s}_{r-s+1}, s-1, \ldots, 1).$$
	
	Hence, by  formula \eqref{eq.Corollary 5.2.19} we get  

	\begin{equation}\label{hXYW} \begin{array}{c}
	h_{X\cup Y\cup W}\\
	=(1,\ldots, r+s-2,\ r+s-1, \underbrace{r+s-2,\ r+s-4,  \ldots, r-s+2}_{ s -1},\underbrace{ r-s, r-s-1, r-s-2, \ldots}_{r-s}).
	\end{array}
	\end{equation}
	Observe that, 
	for $s=2$  we get
	$$ h_{X\cup Y\cup W}=(1,\ldots, r,\ r+1,\ r,\underbrace{ r-2, r-3, \ldots}_{r-2}),$$
	for $r=s$ we have
	$$ h_{X\cup Y\cup W}=(1,\ldots, 2r-2,\ 2r-1,\ \underbrace{2r-2,\ 2r-4,  \ldots, 2}_{ r-1}),$$
	and for $r=s=2$ 
	$$ h_{X\cup Y\cup W}=(1, \ 2, \ 3,\ 2).$$


	We will prove the theorem by induction on $2r+2s-3$, that is, on the sum of the multiplicities of the three  fat points.
	If $ 2r+2s-3=5$, then 
	$Z= 3P_1+P_2+P_3$, whose $h$-vector is $(1, \ 2, \ 3,\ 2)$,
	so the statement is proved for $(r,s)=(2,2).$
	
	Assume $2r+2s-3\geq 7$, and recall that $Z= (r+s-1)P_1+(r-1)P_2+(s-1)P_3$ ( $r \geq s \geq 2$).  By Proposition \ref{p. 3 fat} we have

	$$H_Z(d)=\left\{\begin{array}{llc}
d+1+ H_{Z'}(d-1) & \text{if}\ 0\le d\le (r+s-1)+(r-1)-2\\
\deg(Z) & \text{if}\ d\ge (r+s-1)+(r-1)-1\\
\end{array}
\right.$$
where $Z'=(r+s-2)P_1+(r-2)P_2+(s-1)P_3$. Hence the $h$-vector of $Z$ is
	\begin{equation}\label{eq. h Z}
		h_Z(d)=\left\{\begin{array}{llc}
			1+ h_{Z'}(d-1) & \text{if}\ 0\le d\le (r+s-1)+(r-1)-2\\
			\deg(Z)-d-H_{Z'}(d-2) & \text{if}\ d= (r+s-1)+(r-1)-1\\
			0 & \text{if}\ d\ge (r+s-1)+(r-1)\\
		\end{array}.
		\right.
	\end{equation}
	
	Now we will compute  $H_{Z'}(d-2) $ for $d= (r+s-1)+(r-1)-1 =2r+s-3 $.
	
	For $r>s$ we have  $r+s-2> r-2 \geq s-1$, and $d-2 = 2r+s-5 = (r+s-2) +(r-2)-1$, which is the sum of the two highest multiplicities.  Hence, by Proposition \ref{p. 3 fat}, we get $H_{Z'}(2r+s-5) = \deg(Z')$. 
	
	If $r=s$, we have $Z'=(2r-2)P_1+(r-2)P_2+(r-1)P_3$ and we need to compute $H_{Z'}(3r-5)$. Since the line $P_1P_3$  is a fixed component for the curves of degree $3r-5$ through $Z'$, we have
	$$\dim (I_{Z'} )_{3r-5}=\dim (I_{Z''})_{3r-6}, $$
	where $Z''= (2r-3)P_1+(r-2)P_2+(r-2)P_3$. 
	Since the scheme $Z''$ gives independent conditions to the curve of degree  $3r-6$ (see again Proposition \ref{p. 3 fat}), hence   $H_{Z''}(3r-6)= \deg (Z'')$.
	It follows that
	\[
	\begin{array}{rcl}
	H_{Z'}(3r-5)&=& \binom{3r-3}{2}- \dim  (I_{Z'})_{3r-5}=\binom{3r-3}{2}- \dim  (I_{Z''})_{3r-6}=\binom{3r-3}{2}- \binom{3r-4}{2} + \deg(Z'')\\
& =& 3r^2-5r+1.
	\end{array}
	\]

	Thus, for $d= 2r+s-3$ we have
	$$\deg(Z)-d-H_{Z'}(d-2) =
	\left\{\begin{array}{lccc}
	\deg(Z)-(2r+s-3)-\deg(Z')= 1  & \text{if}\ r > s\\
	\deg(Z) -(3r-3)-(3r^2-5r+1)= 2  & \text{if}\ r=s\\
	\end{array}\right.
	.$$
	From this equality and from (\ref {eq. h Z}) we get 

	\begin{equation} \label {hzd}
		h_Z(d)=\left\{\begin{array}{llc}
			1+ h_{Z'}(d-1) & \text{if}\ 0\le d\le 2r+s-4\\
			1 & \text{if}\ d= 2r+s-3 \ \text{and} \ r> s \\
			2 & \text{if}\ d=3r-3 \  \text{and} \ r=s \\
			0 & \text{if}\ d\ge 2r+s-2\\
		\end{array}.
		\right. 
	\end{equation}
	
	Now, if $r>s$, from  the inductive hypothesis, by substituting $r$ with $r-1$ in formula
	\eqref{eq. h vec Z}, we get the $h$-vector of $Z' = (r+s-2)P_1+(r-2)P_2+(s-1)P_3$, that is,
	$$ h_{Z'}=(1,\ldots, r+s-3,\ r+s-2, \underbrace{r+s-3,\ r+s-5,  \ldots, r-s+1}_{ s -1},\underbrace{ r-s-1, r-s-2, \ldots)}_{r-s-1}.$$
	In case $r=s$, we have $Z' =  (2r-2)P_1+(r-1)P_3+(r-2)P_2$, (note that $2r-2\ge r-1\ge r-2$). Let $s'=r-1$. With this notation $Z'=(r+s'-1)P_1+(r-1)P_3+(s'-1)P_2$. By applying the inductive hypothesis and then by substituting $s'$ with $r-1$, we get 
$$	h_{Z'}=(1,\ldots, r+s'-2,\ r+s'-1, \underbrace{r+s'-2,\ r+s'-4,  \ldots, r-s'+2}_{ s' -1}, r-s', r-s'-1, r-s'-2, \ldots, 1)=$$
$$=(1,\ldots, 2r-3,\ 2r-2, \underbrace{2r-3,\ 2r-5,  \ldots, 3}_{ r-2},  1).$$
%
%
	
	By (\ref {hXYW}) and (\ref {hzd}) the conclusion follows.
\end{proof}

	\begin{remark}
	Note that this theorem gives a non-algorithmic formula for the $h$-vector of three fat points of multiplicities $m_1, m_2, m_3$ when $m_1=m_2+m_3$ or $m_1=m_2+m_3 \pm 1$. 
\end{remark}

 We illustrate the case $r=3, s=3, t=6$ in the following example.   
\begin{example}\label{ex.3-3-6}
	Let $X=\mathbb S(\ell_1, \ell_2, \ell_3)$, $Y=\mathbb S(m_1, m_2, m_3)$ and $W=\mathbb S(n_1, \ldots, n_6)$ be contact star configurations on the same conic, see Figure \ref{fig:3-3-6}. 
	Set $T=\mathbb S(\ell_1, \ell_2, \ell_3, m_1,m_2, m_3)$, see Figure  \ref{fig:3-3-6grid}.
	
		\begin{figure}[h]
		\centering
		\subfloat[The union of three contact star configurations on a conic.]{
		\includegraphics[width=.44\linewidth]{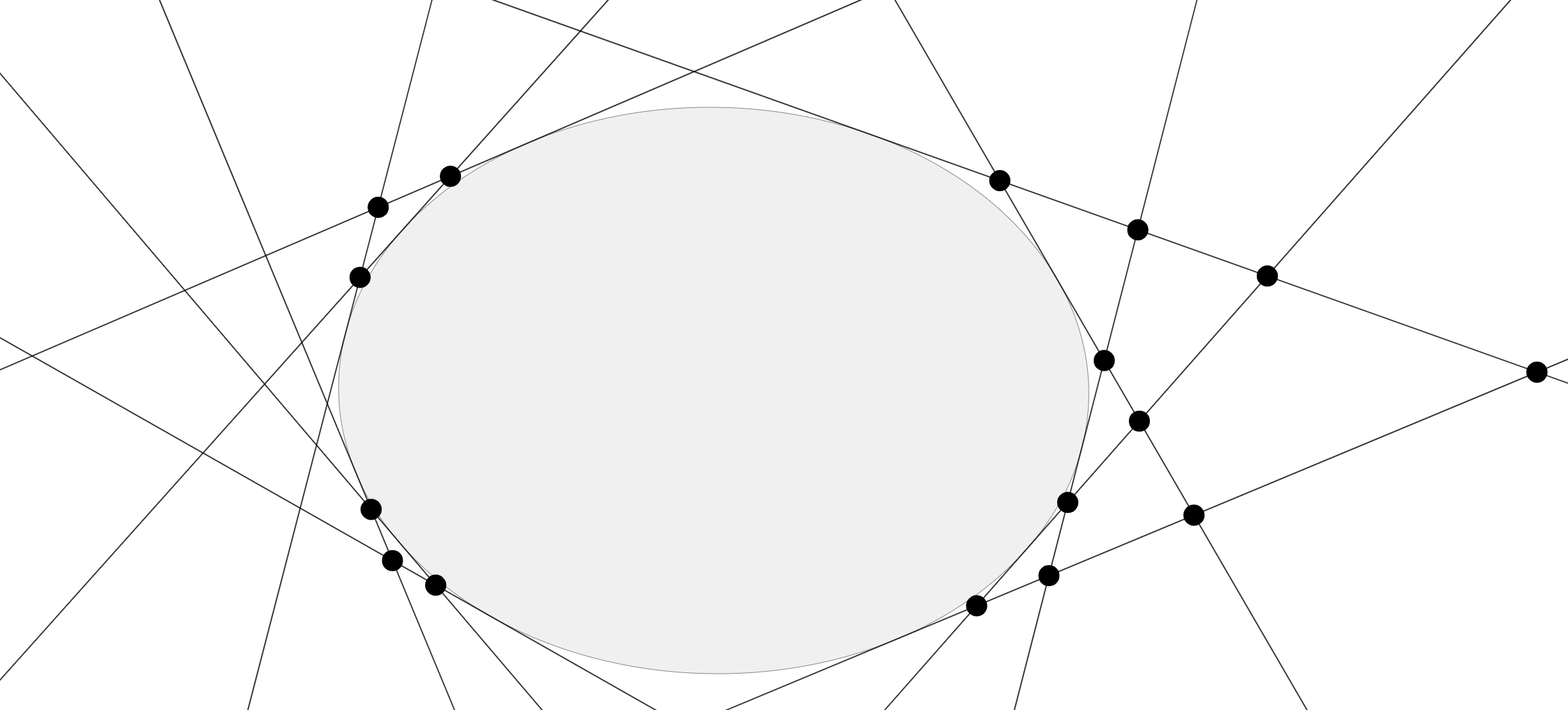}			\label{fig:3-3-6}}		\qquad
		\subfloat[The nine points in $T\setminus (X\cup Y)$ on a grid.]{
				\includegraphics[width=.44\linewidth]{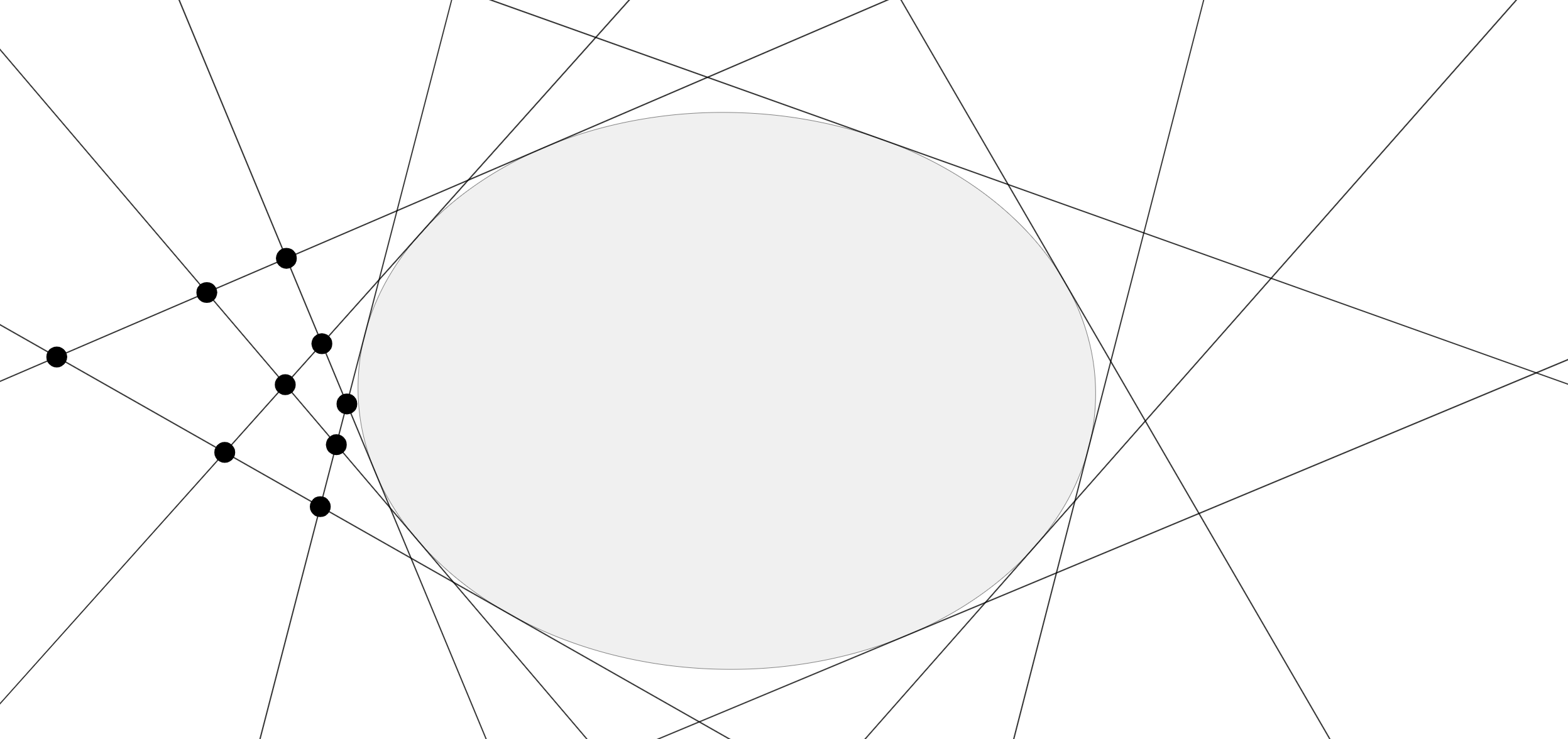}
					\label{fig:3-3-6grid}}
		\caption{ Example \ref{ex.3-3-6}.}
		\label{fig:ex.3-3-6}
	\end{figure}
	
	Then  
	\[\begin{array}{lcl}
	h_{W\cup T}&=& (1,2,3,4,5,5,4,3,2,1),\\
	h_{T\setminus (X\cup Y)}&=&(1,2,3,2,1),
	\end{array}\]
	and, by liaison, 	$h_{X\cup Y\cup W}=(1,2,3,4,5,4,2)$,
	which is the $h$-vector of three fat points of multiplicity $2,2,5$.
\end{example}

Theorem \ref{t. 3 fat 3 had} and experiments using CoCoA~\cite{CoCoA} suggest the following conjecture.

\begin{conjecture}\label{conj. 3,4 star} 
	The $h$-vector of a scheme of $s\le 4$ general fat points  of multiplicities $m_i$, $(i=1,\ldots, s)$ is equal to the $h$-vector of the union of $s$ contact star configurations defined by $m_i+1$ lines tangent to the same conic.  
\end{conjecture}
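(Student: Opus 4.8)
The plan is to mimic the liaison strategy already used successfully for $s=2$ (Theorem \ref{t. h-vector 2 star }) and $s=3$ (Theorem \ref{t. 3 fat 3 had}), pushing it to $s=4$, and to handle the $s=4$ case via a reduction to the $s\le 3$ cases that are now theorems. Fix a conic $\gamma$ and let $X_1,\dots,X_s$ be contact star configurations on $\gamma$ defined by $a_1\ge\cdots\ge a_s$ lines, so we are comparing $h_{X_1\cup\cdots\cup X_s}$ with $h_Z$ where $Z=\sum_{i}(a_i-1)P_i$ is a general scheme of $s$ fat points. The key structural fact, already exploited in the excerpt, is that if $\mathcal L_i$ is the set of lines defining $X_i$, then $X_1\cup X_2$ sits inside the single contact star configuration $\mathbb S(\mathcal L_1\cup\mathcal L_2)$, and the residual $\mathbb S(\mathcal L_1\cup\mathcal L_2)\setminus(X_1\cup X_2)$ is itself a complete intersection of type $(a_1,a_2)$ (it is the set of intersection points of a line of $\mathcal L_1$ with a line of $\mathcal L_2$, i.e.\ a grid). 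This is the engine behind Lemma \ref{l. necessary} and Example \ref{ex.3-3-6}.

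\emph{Step 1: a ``fattening/linking'' move for contact stars.} First I would prove the general linkage identity: for contact star configurations, enlarging $X_s=\mathbb S(m_1,\dots,m_{a_s})$ to $X_s'=\mathbb S(m_1,\dots,m_{a_s+t})$ by adding $t$ tangent lines, and using formula \eqref{eq.Corollary 5.2.19} together with the complete-intersection description of the residual grid, expresses $h_{X_1\cup\cdots\cup X_{s-1}\cup X_s}$ in terms of $h_{X_1\cup\cdots\cup X_{s-1}\cup X_s'}$ and the $h$-vector of the grid (an explicit complete intersection). By choosing $t$ so that $a_s+t\in\{a_{s-1}-1,a_{s-1},a_{s-1}+1\}$ — whichever makes the next reduction applicable — one reduces the number of ``independent'' multiplicities by essentially absorbing $X_s$ into the union $X_1\cup\cdots\cup X_{s-1}$. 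The point is that this move is \emph{exactly} the liaison move that, on the fat-points side, corresponds to the recursive structure in Proposition \ref{p. 3 fat} and its four-point analogue. So Step 1 is really: set up the dictionary ``adding tangent lines to the smallest contact star'' $\longleftrightarrow$ ``the recursion in the Hilbert function of fat points.''

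\emph{Step 2: the case $s=3$ is Theorem \ref{t. 3 fat 3 had}; do $s=4$ by induction.} For $s=4$, write $Z=(a_1-1)P_1+\cdots+(a_4-1)P_4$. The known recursion for four general fat points in $\mathbb P^2$ (for which I would cite the standard reference used for Proposition \ref{p. 3 fat}, with the analogous statement for four points, peeling off the line through the two points of largest multiplicity) reduces $h_Z$ to $h_{Z'}$ with $Z'$ having strictly smaller total multiplicity, plus possibly a ``line through two points'' correction of the kind handled in the $r=s$ subcase of the proof of Theorem \ref{t. 3 fat 3 had}. On the geometric side I would apply Step 1: enlarge the appropriate $X_i$'s by tangent lines until the union becomes a complete intersection (possible once two of the four line-counts differ by $0$ or $1$ and the other structure lines up, by Theorem \ref{t.main} applied to the two ``big'' contact stars grouped together) or until it reduces to a union of three contact stars, which is governed by Theorem \ref{t. 3 fat 3 had}; then read off $h_{X_1\cup\cdots\cup X_4}$ from \eqref{eq.Corollary 5.2.19} and the explicit $h$-vectors of the grids removed. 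Induction on $\sum(a_i-1)$ closes the loop, provided at every stage the inequalities $a_1\ge a_2\ge a_3\ge a_4\ge 2$ and the admissibility hypotheses needed for Theorem \ref{t.main} and Theorem \ref{t. 3 fat 3 had} are maintained — this bookkeeping (which case of the fat-point recursion matches which grouping of the contact stars) is the part that needs care but no new ideas.

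\emph{Main obstacle.} The genuine difficulty is that, unlike $s=3$ where Theorem \ref{t.main}(d) handed us a complete intersection ``for free'' after one fattening step, for $s=4$ the union of four contact stars need not become a complete intersection after a single fattening; one must fatten two of them and control the $h$-vector of a union of \emph{three} contact stars with the specific non-generic relation among line counts that the fattening produces, and check it still falls in the range $\{r+s-1,r+s,r+s+1\}$ demanded by Theorem \ref{t. 3 fat 3 had}. Equivalently: the hard part is verifying that the recursion tree for four general fat points in $\mathbb P^2$ and the recursion tree produced by ``add-tangent-lines + liaison'' on four contact stars have the same shape at every node, including the exceptional nodes where a line through two of the points is a fixed component. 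Making that matching precise — rather than the individual $h$-vector computations, which are routine given Theorems \ref{t.main} and \ref{t. 3 fat 3 had} and formula \eqref{eq.Corollary 5.2.19} — is where I expect the real work to lie, and it is presumably why the statement is posed as a conjecture rather than a theorem.
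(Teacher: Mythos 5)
This statement is posed as a conjecture in the paper precisely because no proof is known: the authors only establish the case $s=2$ (Theorem \ref{t. h-vector 2 star }) and the case $s=3$ under the restrictive hypothesis $t\in\{r+s-1,r+s,r+s+1\}$ (Theorem \ref{t. 3 fat 3 had}), and they support the rest by CoCoA experiments (while Example \ref{e. five double points} shows the analogue fails for $s=5$). Your proposal is a strategy outline rather than a proof, and you yourself concede in the last paragraph that the decisive step is unresolved; so there is nothing here that settles the statement. Moreover, your assertion that ``the case $s=3$ is Theorem \ref{t. 3 fat 3 had}'' is incorrect: that theorem does not cover an arbitrary triple of line-counts, only those where the largest is within one of the sum of the other two, so even $s=3$ of the conjecture remains open in the paper.

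There is also a concrete flaw in your Step 1. Formula \eqref{eq.Corollary 5.2.19} requires the ambient scheme of the link to be a complete intersection (arithmetically Gorenstein), but $X_1\cup\cdots\cup X_{s-1}\cup X_s'$ is in general not one, so you cannot ``express $h_{X_1\cup\cdots\cup X_s}$ in terms of $h_{X_1\cup\cdots\cup X_{s-1}\cup X_s'}$ and the $h$-vector of the grid'' by liaison. In the paper's $s=3$ argument the trick is to merge two of the stars into a single big contact star $T=\mathbb S(\mathcal L_1\cup\mathcal L_2)$ and pair $T$ with the third star $W$ via Theorem \ref{t.main}; this forces $t$ to be within one of $r+s$, which is exactly the restriction in Theorem \ref{t. 3 fat 3 had}, and the residual $T\setminus(X\cup Y)$ happens to be a single $(r,s)$-grid, hence a complete intersection with known $h$-vector. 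For $s=4$ the analogous move would merge three stars into $T=\mathbb S(\mathcal L_1\cup\mathcal L_2\cup\mathcal L_3)$, but then the residual $T\setminus(X_1\cup X_2\cup X_3)$ is a union of three grids (points on one line from each of two different groups), whose $h$-vector is not supplied by any result in the paper; alternatively one would need a complete-intersection statement for unions of more than two contact stars, which Theorem \ref{t.main} does not provide. This is the structural obstruction your plan glosses over, and without overcoming it the proposed induction cannot even get started for $s=4$.
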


The next example shows that the conjecture does not hold for $s=5$.

\begin{example}\label{e. five double points}
The $h$-vector of five general fat points of multiplicity 2 in $\mathbb P^2$ is
$$(1,2,3,4,4,1)$$
but we checked  with  CoCoA \cite{CoCoA} that the $h$-vector of five general contact star configurations on a conic defined by three lines is
$$(1,2,3,4,5).$$ %
\end{example}

\section{Applications of Theorem \ref{t.main} to polygons}\label{s.applications}

As an application of Theorem \ref{t.main} we get a result that in a certain sense extend the Brianchon's Theorem to an octagon circumscribed to a conic. 
\begin{proposition}\label{p.brianchonlike}
	Let $A_1,\ldots, A_8$ be the vertices of an octagon and let $\ell_{ij}$ be the line $A_i A_j$.
	Set $P_1=\ell_{18}\cap\ell_{23} $ and $P_2=\ell_{12}\cap\ell_{34}$ and let $\gamma_1$ and $\gamma_2$ be the conics  through $A_1,A_2, A_5,A_6,P_1$ and $A_2,A_3, A_6,A_7,P_2$, respectively. 
	Let $\gamma_1\cap \gamma_2=\{A_2, A_6, B_1, B_2\}$.  If
	the octagon is circumscribed to a conic $\gamma$, then the points $A_4, A_8, B_1,B_2$   are on a line. (See Figure \ref{fig.brianchon2})\begin{figure}[h]
		\centering
		\includegraphics[scale=0.45]{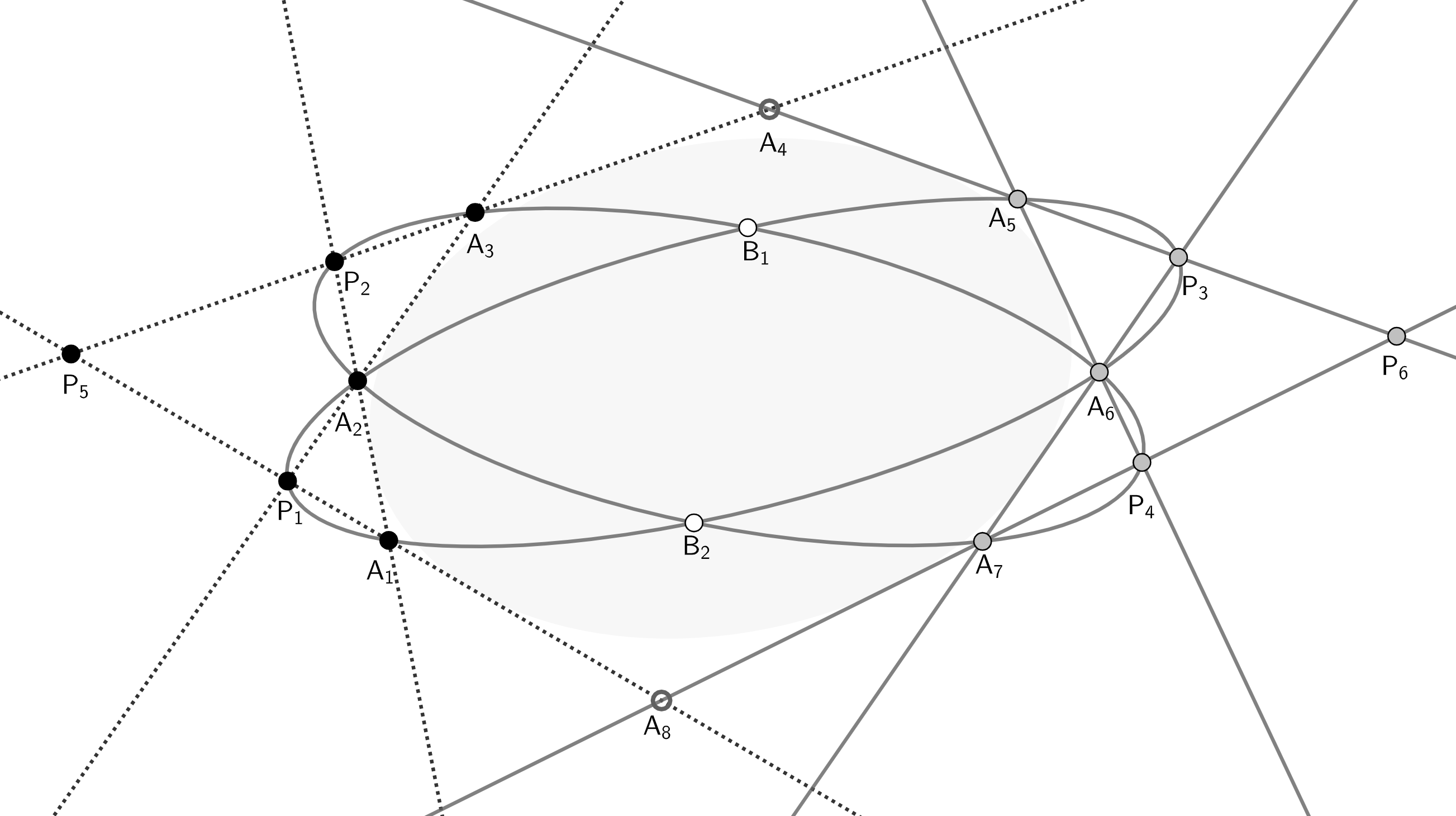}
		\caption{The case described in Proposition \ref{p.brianchonlike} }\label{fig.brianchon2}
\end{figure}\end{proposition}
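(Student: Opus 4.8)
The plan is to present the twelve ``visible'' points of the configuration as a complete intersection of type $(3,4)$ sitting inside a complete intersection of type $(4,4)$ cut out by two degenerate quartics built from $\gamma_1,\gamma_2$ and four sides of the octagon, and then to read off the collinearity of $A_4,A_8,B_1,B_2$ from the liaison formula \eqref{eq.Corollary 5.2.19}. Write $t_i$ for the side $\ell_{i,i+1}$ of the octagon (indices mod $8$, so $t_8=\ell_{18}$); each $t_i$ is tangent to $\gamma$, $P_1=t_8\cap t_2$ and $P_2=t_1\cap t_3$. Also set $R=t_4\cap t_6$, $S=t_5\cap t_7$, $D_1=t_3\cap t_8$, $D_2=t_4\cap t_7$. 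I carry out the argument for a general octagon circumscribed to $\gamma$, so that the sixteen points below are distinct, $\gamma_1\neq\gamma_2$, and the five points defining each $\gamma_i$ are in general position; the general case follows by specialization.

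First I would enlarge $\gamma_1$ and $\gamma_2$ using Lemma \ref{l. case 3-3}. Since $\{A_1,A_2,P_1\}=\mathbb S(t_1,t_2,t_8)$ and $\{A_5,A_6\}\subset\mathbb S(t_4,t_5,t_6)$ (the missing vertex of the latter being $R$), the conic $\gamma_1$ contains five of the six points of the union $\mathbb S(t_1,t_2,t_8)\cup\mathbb S(t_4,t_5,t_6)$ of two contact star configurations on $\gamma$, so Lemma \ref{l. case 3-3}(ii) forces $R\in\gamma_1$. Symmetrically $\{A_2,A_3,P_2\}=\mathbb S(t_1,t_2,t_3)$ and $\{A_6,A_7\}\subset\mathbb S(t_5,t_6,t_7)$ (missing vertex $S$), whence $\gamma_2\supseteq\mathbb S(t_1,t_2,t_3)\cup\mathbb S(t_5,t_6,t_7)$, so $S\in\gamma_2$.

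Next I would set $\Pi=\mathbb S(t_1,t_2,t_3,t_8)\cup\mathbb S(t_4,t_5,t_6,t_7)$, a union of two contact star configurations on $\gamma$ defined by four lines each; by Theorem \ref{t.main}(e) (the case $r=s=4$) it is a complete intersection of type $(3,4)$, so $h_\Pi=(1,2,3,3,2,1)$. Explicitly $\mathbb S(t_1,t_2,t_3,t_8)=\{A_1,A_2,A_3,P_1,P_2,D_1\}$ and $\mathbb S(t_4,t_5,t_6,t_7)=\{A_5,A_6,A_7,R,S,D_2\}$. The key observation is that $\Pi$ lies on each of the two degenerate quartics $Q_1=\gamma_1\cup t_3\cup t_7$ and $Q_2=\gamma_2\cup t_4\cup t_8$: using the previous step, $A_1,A_2,P_1,A_5,A_6,R$ lie on $\gamma_1$, the points $A_3,P_2,D_1$ lie on $t_3$, and $A_7,S,D_2$ lie on $t_7$; likewise $A_2,A_3,P_2,A_6,A_7,S$ lie on $\gamma_2$, while $A_1,P_1,D_1$ lie on $t_8$ and $A_5,R,D_2$ lie on $t_4$. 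Since $Q_1$ and $Q_2$ have no common component, $Q_1\cap Q_2$ is a complete intersection of type $(4,4)$, i.e.\ sixteen points; expanding $Q_1\cap Q_2$ as the union of its nine line/conic intersections and using $\gamma_1\cap t_4=\{A_5,R\}$, $\gamma_1\cap t_8=\{A_1,P_1\}$, $\gamma_2\cap t_3=\{A_3,P_2\}$, $\gamma_2\cap t_7=\{A_7,S\}$, $\gamma_1\cap\gamma_2=\{A_2,A_6,B_1,B_2\}$, together with $t_3\cap t_4=A_4$, $t_7\cap t_8=A_8$, $t_3\cap t_8=D_1$, $t_4\cap t_7=D_2$, one gets exactly
\[
Q_1\cap Q_2=\Pi\ \sqcup\ \{A_4,A_8,B_1,B_2\}.
\]

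Finally, $\Pi$ and $W:=\{A_4,A_8,B_1,B_2\}$ are disjoint and $\Pi\cup W$ is a complete intersection of type $(4,4)$, so \eqref{eq.Corollary 5.2.19} with $a=b=4$, $h_{\Pi\cup W}=(1,2,3,4,3,2,1)$ and $h_\Pi=(1,2,3,3,2,1)$ yields $h_W=(1,1,1,1)$; a set of four reduced points in $\mathbb P^2$ with this $h$-vector is collinear, which is the assertion. The main obstacle is the geometric bookkeeping of the first three steps: guessing the right pair of quartics $Q_1,Q_2$ and verifying that their intersection, beyond the $(3,4)$ complete intersection $\Pi$, is \emph{precisely} $\{A_4,A_8,B_1,B_2\}$ — this is exactly where the line enters. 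Once this is in place, pinning down $R\in\gamma_1$, $S\in\gamma_2$ via Lemma \ref{l. case 3-3} and the closing liaison computation are routine, though some care with the genericity hypotheses (all sixteen points distinct, $\gamma_1\neq\gamma_2$, $Q_1$ and $Q_2$ sharing no component) is needed to invoke Lemma \ref{l. case 3-3} and Theorem \ref{t.main} cleanly.
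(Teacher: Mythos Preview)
Your argument is correct and essentially identical to the paper's own proof: the same twelve-point set $\Pi=\mathbb S(t_1,t_2,t_3,t_8)\cup\mathbb S(t_4,t_5,t_6,t_7)$, the same pair of degenerate quartics $\gamma_1\cup t_3\cup t_7$ and $\gamma_2\cup t_4\cup t_8$, and the same liaison computation; your $R,S,D_1,D_2$ are exactly the paper's $P_3,P_4,P_5,P_6$. Your references are in fact slightly cleaner---invoking Lemma~\ref{l. case 3-3}(ii) for the extra point on each conic and Theorem~\ref{t.main}(e) for the $(3,4)$ complete intersection is more precise than the paper's citation of Theorem~\ref{t.main}(d) in both places.
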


\begin{proof}	
	Note that, by Theorem \ref{t.main} (d), the point $P_3=\ell_{45}\cap \ell_{67}$ belongs to the conic $\gamma_1$. In fact, $A_1, A_2, P_1$ and $A_5, A_6, P_3$ are two contact star configurations on the same conic and then a complete intersection of type $(2,3).$  Analogously, the point $P_4=\ell_{56}\cap \ell_{78}$ belongs to the conic $\gamma_2$.
	Moreover, observe that the points $A_1,A_2,A_3,P_1,P_2,P_5=\ell_{18}\cap\ell_{34}$, and the points $A_5,A_6,A_7,P_3,P_4,P_6=\ell_{45}\cap\ell_{78}$ are two  contact star configurations each defined by four lines tangent to the same conic, hence by Theorem \ref{t.main} (d) these twelve points are a complete intersection of type $(3,4),$ thus their $h$-vector is $(1,2,3,3,2,1).$
	
	\noindent Now consider the two quartics $\gamma_1\cup\ell_{34}\cup\ell_{78}$ and $\gamma_2\cup\ell_{18}\cup\ell_{45}$. This two quartics meet in a complete intersection  of sixteen points, that consists of the twelve points described above and the points $A_4, A_8, B_1, B_2$. 
	By  relation \eqref{eq.Corollary 5.2.19}, we get 
	\[
	\begin{array}{l|ccccccccc}
	& 0& 1 & 2 & 3 & 4 & 5 & 6\\
	\hline
	\text{the $h$-vector of the sixteen points } & 1 & 2 & 3& 4& 3 & 2& 1\\
	\text{the $h$-vector of the twelve points } & 1 & 2 & 3& 3 & 2 & 1 \\
	\text{the $h$-vector of} \ \{A_4, A_8, B_1, B_2\} &  &    &  & 1& 1& 1& 1\\
	\end{array}
	\]
	the table above shows that the four points are collinear.
\end{proof}

\begin{corollary}\label{c.conic8points}
		Let $A_1,\ldots, A_8$ be the vertices of an octagon and let $\ell_{ij}$ be the line $A_i A_j$.
	Set $P_1=\ell_{18}\cap\ell_{23} , P_2=\ell_{12}\cap\ell_{34}, P_3=\ell_{23}\cap\ell_{45}, P_4=\ell_{34}\cap\ell_{56} $ and let $\gamma_i$ be the conic  through $A_i,A_{i+1}, A_{4+i},A_{5+i},P_i$ , $i=1,\ldots 4$ ($A_9=A_1$). 
	If	the octagon is circumscribed to a conic $\gamma$, then the eight points $(\gamma_1\cap \gamma_3)\cup (\gamma_2\cap \gamma_4)$   are on a conic. (See Figure \ref{fig.conic8points}).
\end{corollary}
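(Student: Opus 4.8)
The plan is to mimic the liaison argument in the proof of Proposition~\ref{p.brianchonlike}, the only novelty being that the complementary conic will turn out to be a pair of lines. First I would fix the two quartics $Q=\gamma_1\cup\gamma_4$ and $Q'=\gamma_2\cup\gamma_3$. For a general octagon circumscribed to $\gamma$ the four conics $\gamma_1,\dots,\gamma_4$ are irreducible and pairwise distinct, so $Q$ and $Q'$ have no common component and $Q\cap Q'$ is a complete intersection of type $(4,4)$ consisting of $16$ distinct reduced points, with $h$-vector $(1,2,3,4,3,2,1)$. Expanding the intersection,
\[
Q\cap Q'=\underbrace{(\gamma_1\cap\gamma_3)\cup(\gamma_2\cap\gamma_4)}_{=:\,Y}\ \sqcup\ \underbrace{(\gamma_1\cap\gamma_2)\cup(\gamma_3\cap\gamma_4)}_{=:\,X},
\]
where $Y$ is the set of eight points in the statement — for a general octagon these are eight distinct points, none of which is a vertex $A_i$ — and $X$ is the union of $\{A_2,A_6\}$ (the vertices lying on $\gamma_1\cap\gamma_2$), $\{A_4,A_8\}$ (the vertices lying on $\gamma_3\cap\gamma_4$), and the four residual intersection points.

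The heart of the argument is to show that $X$ lies on a conic, and this is exactly where Proposition~\ref{p.brianchonlike} enters. Writing $\gamma_1\cap\gamma_2=\{A_2,A_6,B_1,B_2\}$, that proposition gives that $B_1,B_2$ lie on the line $\overline{A_4A_8}$. Applying the same proposition after the cyclic relabeling $A_i\mapsto A_{i+2}$ — which carries the circumscribed octagon to itself and the pair $(\gamma_1,\gamma_2)$ to $(\gamma_3,\gamma_4)$ — and writing $\gamma_3\cap\gamma_4=\{A_4,A_8,B_1',B_2'\}$, we get that $B_1',B_2'$ lie on the line $\overline{A_2A_6}$. Therefore the eight points of $X$ all lie on the degenerate conic $\overline{A_2A_6}\cup\overline{A_4A_8}$ — four of them, $\{A_2,A_6,B_1',B_2'\}$, on one line and four, $\{A_4,A_8,B_1,B_2\}$, on the other — and since for a general octagon no five of them are collinear, one gets $h_X=(1,2,2,2,1)$.

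It then remains to run liaison. Since $X\cup Y=Q\cap Q'$ is a complete intersection of type $(4,4)$, formula~\eqref{eq.Corollary 5.2.19} with $a=b=4$ (so $a+b-2=6$) gives $h_{X\cup Y}(t)=h_X(t)+h_Y(6-t)$ for all $t$, and solving yields $h_Y=(1,2,2,2,1)$. In particular $\dim (I_Y)_2=\dim S_2-H_Y(2)=6-5=1$, so the eight points $(\gamma_1\cap\gamma_3)\cup(\gamma_2\cap\gamma_4)$ lie on a conic, which is what we want.

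The part I expect to require the most care is not any computation but the bookkeeping of genericity: one must ensure that $\gamma_1,\dots,\gamma_4$ are honest, pairwise distinct, irreducible conics, that $Q\cap Q'$ is reduced and decomposes as the \emph{disjoint} union $X\sqcup Y$ of two sets of eight distinct points, and that no five points of $X$ lie on a line. All of these hold for a general octagon circumscribed to $\gamma$ — the same standing hypothesis implicitly used in Proposition~\ref{p.brianchonlike} — so the statement should be understood with that genericity assumed.
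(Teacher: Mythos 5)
Your proof is correct and follows essentially the same route as the paper: link the eight points inside the complete intersection of the quartics $\gamma_1\cup\gamma_4$ and $\gamma_2\cup\gamma_3$, use Proposition~\ref{p.brianchonlike} (applied twice, via the cyclic shift you spell out) to see that the residual set $(\gamma_1\cap\gamma_2)\cup(\gamma_3\cap\gamma_4)$ lies on two lines with $h$-vector $(1,2,2,2,1)$, and conclude by formula~\eqref{eq.Corollary 5.2.19}. Your version is just more explicit than the paper's about the relabeling and the genericity bookkeeping.
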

\begin{proof}
		\begin{figure}[h]
		\centering
		\includegraphics[scale=0.30]{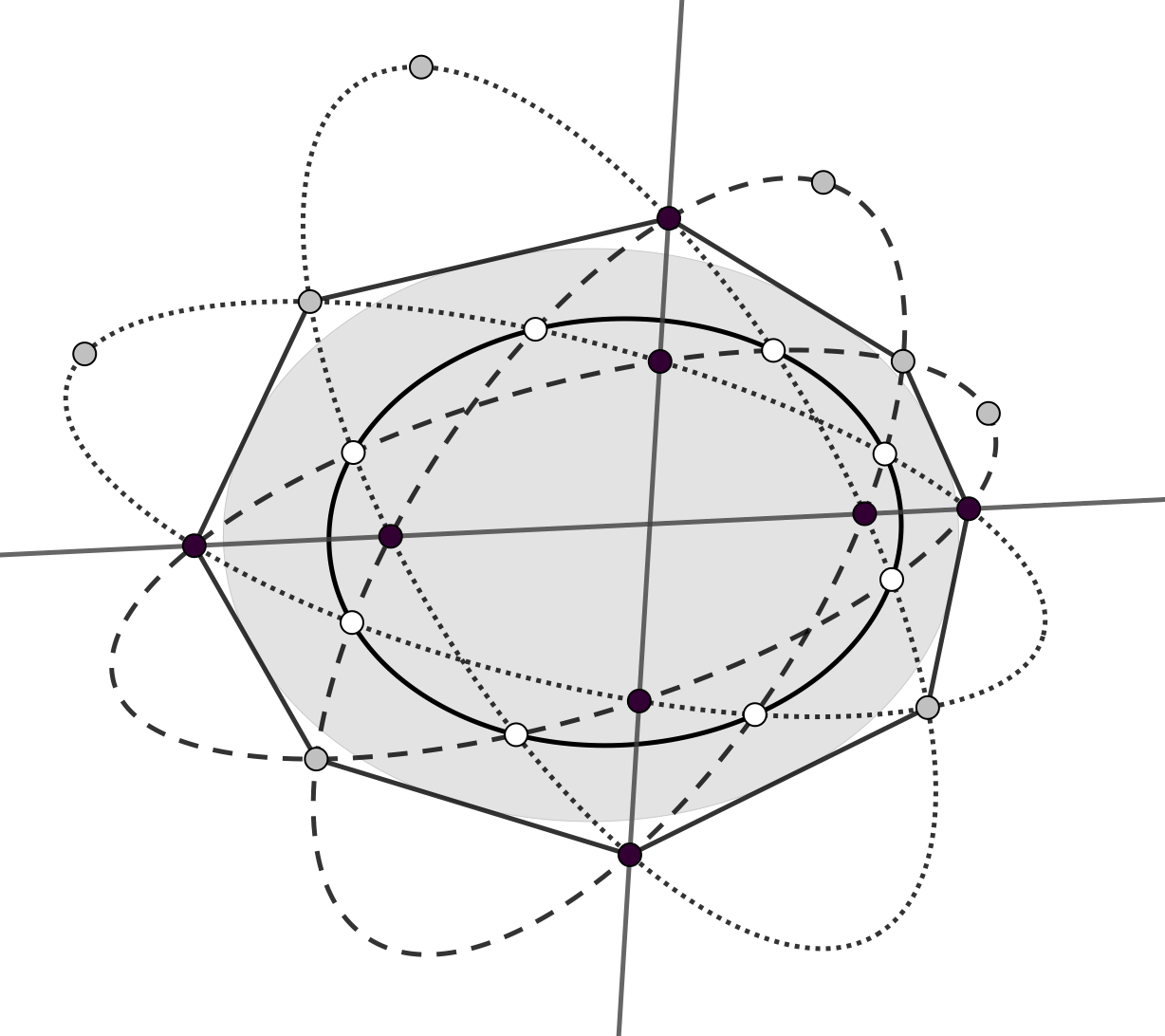}
		\caption{The case described in Corollary \ref{c.conic8points} }\label{fig.conic8points}\end{figure}
	The quartics $\gamma_1\cup \gamma_4$ and $\gamma_2\cup \gamma_3$ meet in sixteen points, in black and white in figure \ref{c.conic8points}. Eight of them, the black dots, that is $(\gamma_1\cap \gamma_2)\cup(\gamma_3\cap \gamma_4)$,  by Proposition \ref{p.brianchonlike}, lie on two lines. So, the $h$-vector  of these eight points must be $(1,2,2,2,1)$. By formula \eqref{eq.Corollary 5.2.19} the residual points lie on a conic.
\end{proof}

Recall that, by Theorem \ref{t.main}(e), the six points union of two contact star configurations each defined by three line tangent to the same conic, are contained on a conic. An interesting case occurs considering three such contact star configurations, see Proposition \ref{p.3conics}, it can also be translated in a property of a polygon of nine sides circumscribed to a conic.

\begin{proposition}\label{p.3conics}	
	Let $X_{1}=\mathbb{S}(\ell_1, \ell_2, \ell_3), X_{2}=\mathbb{S}(m_1, m_2, m_3),X_{3}=\mathbb{S}(n_1, n_2, n_3)$ be contact star configurations on a conic.
	Let $\gamma_{ij}$ be the conic containing $X_i\cup X_j$. Then,  $\gamma_{12},\gamma_{13},\gamma_{23}$ meet in a point.
 	See Figure \ref{fig:3conics1}. 
\end{proposition}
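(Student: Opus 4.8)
The plan is to follow the liaison template used for Proposition \ref{p.brianchonlike} and Corollary \ref{c.conic8points}, after reducing the problem to a single incidence. First, the reductions. Each $X_i$ has $\binom{3}{2}=3$ points, so by Theorem \ref{t.main}(e) (the case $r=s=3$) the six points $X_i\cup X_j$ form a complete intersection of type $(2,3)$, with $h$-vector $(1,2,2,1)$; hence $\dim_{\mathbb C}(I_{X_i\cup X_j})_2=6-5=1$, so the conic $\gamma_{ij}$ through $X_i\cup X_j$ is unique and the statement is well posed. Since the only conic through $X_1\cup X_2$ is $\gamma_{12}$, no conic passes through all nine points of $X_1\cup X_2\cup X_3$ unless two of the $\gamma_{ij}$ coincide; in that case the shared conic also equals the third by uniqueness, and the conclusion is trivial, so assume the $\gamma_{ij}$ pairwise distinct. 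By B\'ezout, $\gamma_{12}\cap\gamma_{13}=X_1\cup\{A\}$, $\gamma_{12}\cap\gamma_{23}=X_2\cup\{B\}$ and $\gamma_{13}\cap\gamma_{23}=X_3\cup\{C\}$ for single points $A,B,C$ (distinct from the corresponding $X_i$ when the configuration is general). Then $\gamma_{12}\cap\gamma_{13}\cap\gamma_{23}=(X_1\cap\gamma_{23})\cup(\{A\}\cap\gamma_{23})$, so if $X_1$ meets $\gamma_{23}$ we are done, and otherwise everything reduces to showing $A\in\gamma_{23}$; since $A\in\gamma_{12}$ this would force $A\in X_2\cup\{B\}$, i.e.\ $A=B$, and symmetrically $A=B=C$ would be the unique common point.

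For the key incidence $A\in\gamma_{23}$ I would argue as in Lemma \ref{l. case 3-3}. Viewing $X_i$ as the vertices of a triangle circumscribed to $\gamma$, choose for each pair the reducible cubic $C_{ij}$ joining the vertices of the two triangles under a suitable matching; by Brianchon's theorem \cite{casey1888sequel}, applied to the hexagon formed by the six relevant tangent lines, $C_{ij}$ passes through all six vertices and hence completes $\gamma_{ij}$ to the complete intersection $X_i\cup X_j$. I would then intersect two curves adapted to the three conics — for instance the quartics $\gamma_{12}\cup\gamma_{13}$ and $\gamma_{23}\cup q$, with $q$ a suitably chosen auxiliary conic (say a pair of tangent lines to $\gamma$), or instead a pair of cubics of the form $C_{ij}\cup(\text{line})$ — and track how the points of their complete intersection distribute among the nine vertices, the residual points $A,B,C$, and a further remainder that Brianchon's theorem confines to a union of lines. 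Applying the liaison formula \eqref{eq.Corollary 5.2.19} to compute the $h$-vector of the residual set should then leave no room for $A$ off $\gamma_{23}$, finishing the general case; the remaining (non-general) configurations follow by specialization, since ``passing through a common point'' is a closed condition.

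The main obstacle is exactly this last arrangement: one must choose the auxiliary curves, and the hexagon feeding Brianchon's theorem, so that every point of the complete intersection is accounted for by a subscheme with controlled Hilbert function. Once that is set up, the conclusion is a routine application of \eqref{eq.Corollary 5.2.19}, just as in Proposition \ref{p.brianchonlike} and Corollary \ref{c.conic8points}.
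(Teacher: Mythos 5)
Your reduction is sound: the uniqueness of each $\gamma_{ij}$, the B\'ezout count $\gamma_{12}\cap\gamma_{13}=X_1\cup\{A\}$, and the observation that everything comes down to the single incidence $A\in\gamma_{23}$ all match what is needed. But the proof stops exactly where the real work begins, and you say so yourself (``the main obstacle is exactly this last arrangement''). None of the candidate complete intersections you float is pinned down, and the first one ($\gamma_{12}\cup\gamma_{13}$ against $\gamma_{23}\cup q$ for an unspecified auxiliary conic $q$) does not obviously isolate $A$: its sixteen points involve the residual points $B$ and $C$ of the other two pairwise intersections, so you would be chasing three unknown incidences at once rather than one. A plan that defers the choice of auxiliary curves to ``a suitable matching'' plus an unspecified application of Brianchon is not yet a proof.

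The idea you are missing is the one the paper's argument turns on: you may \emph{mix lines from different families} to manufacture new contact star configurations on the same conic $\gamma$. Concretely, take the cubic $\gamma_{12}\cup n_1$ and the quartic $\gamma_{13}\cup m_1\cup m_2$. Their complete intersection of degree $12$ consists of $X_1\cup\{A\}$ (from $\gamma_{12}\cap\gamma_{13}$), the points $m_1\cap m_2$ (with multiplicity two), $m_1\cap m_3$, $m_2\cap m_3$ (from $\gamma_{12}$ meeting $m_1\cup m_2$, since $X_2\subseteq\gamma_{12}$), the points $n_1\cap n_2$, $n_1\cap n_3$ (from $n_1$ meeting $\gamma_{13}\supseteq X_3$), and $n_1\cap m_1$, $n_1\cap m_2$. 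Now $\mathbb{S}(n_1,m_1,m_2)$ is itself a contact star configuration on $\gamma$, so by Theorem \ref{t.main}(e) the six points $X_1\cup\mathbb{S}(n_1,m_1,m_2)$ lie on a conic; linking via \eqref{eq.Corollary 5.2.19} puts the residual six points $\{A\}\cup X_2\cup\{n_1\cap n_2,\,n_1\cap n_3\}$ on a conic as well, and by Theorem \ref{t.main}(d) the unique conic through $X_2\cup\{n_1\cap n_2,\,n_1\cap n_3\}$ is $\gamma_{23}$. Hence $A\in\gamma_{23}$, which is your key incidence. Without identifying a conic-bound subscheme of the complete intersection of this kind, the liaison formula has nothing to act on, so as written your argument has a genuine gap at its central step.
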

\begin{proof}
	Consider the cubic $\gamma_{12}\cup n_1$ and the quartic $\gamma_{13}\cup m_1\cup m_2$ (respectively  dotted and dashed in Figure \ref{fig:3conics2}).
	The cubic and the quartic meet in twelve points (twice in $m_1\cap m_2$). Six of these  points, precisely $X_{1}\cup \mathbb S(n_1, m_1, m_2 )$, lie on a conic, by Theorem \ref{t.main}(e).  So, also the residual six points lie on a conic (by formula \ref{eq.Corollary 5.2.19}) that is, by Theorem \ref{t.main}(d), the conic $\gamma_{23}.$	
\begin{figure}[h]
	\centering
	\subfloat[Three conics meeting in a point.]{
		\includegraphics[width=.35\linewidth]{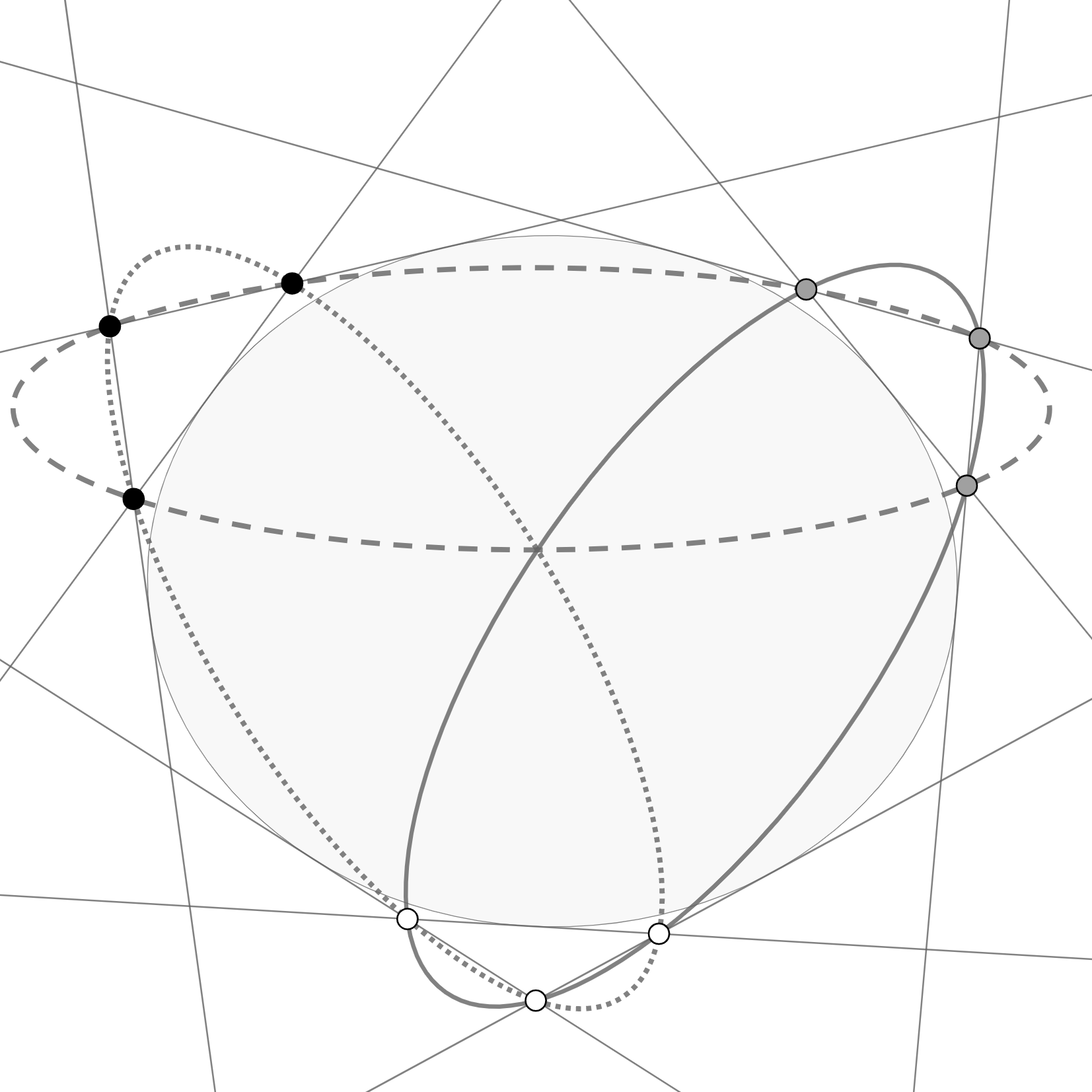}		\label{fig:3conics1}}		\qquad
	\subfloat[The cubic and the quartic.]{
		\includegraphics[width=.40\linewidth]{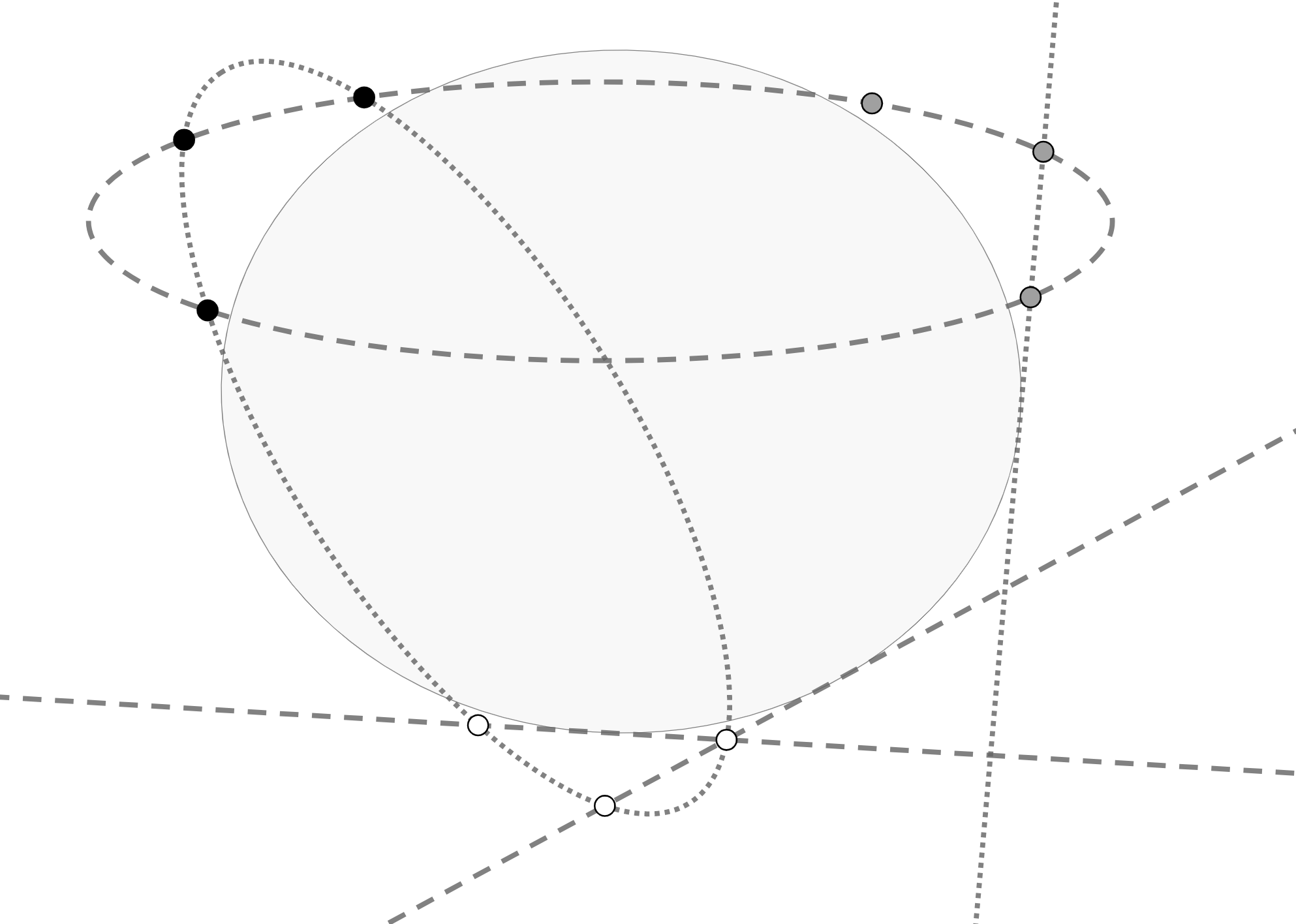}
		\label{fig:3conics2}}
	\caption{ Proposition  \ref{p.3conics}.}
	\label{fig:3conics}
\end{figure}
\end{proof}

\section{Further directions}\label{s. future}

According to our computations, it should be possible to extend some of the results in this paper to higher dimensional spaces. We state the following conjecture.
\begin{conjecture}\label{conj rnc}
	Let $X:=\mathbb{S}(\ell_1,\ldots, \ell_r)$ and $Y:=\mathbb{S}(m_1,\ldots, m_s)$  be two contact star configurations in $\mathbb P^n$, where $r\ge s$ and all the hyperplanes are  distinct. Then 
 the $h$-vector of $X\cup Y$ is $$h_{X\cup Y}=\left(1, \binom{n}{n-1}, \ldots, \binom{r-1}{n-1}, \binom{s-1}{n-1}, \ldots, \binom{n}{n-1}, 1\right).$$
	In particular, if either $s=r$ or $s=r-1$ then $X\cup Y$ is a Gorenstein set of points. 

\end{conjecture}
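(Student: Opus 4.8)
\noindent\emph{A strategy towards Conjecture~\ref{conj rnc}.} The case $n=2$ is Theorem~\ref{t. h-vector 2 star}, so assume $n\ge 3$. The plan is to first settle the two ``symmetric'' cases $s=r-1$ and $s=r$ --- where the real content is the Gorenstein claim --- and then to deduce the $h$-vector for all $s$ as in the proof of Theorem~\ref{t. h-vector 2 star}. For that reduction, given $s<r-1$, adjoin $t=r-1-s$ further osculating hyperplanes $m_{s+1},\dots,m_{s+t}$ and put $Y'=\mathbb S(m_1,\dots,m_{s+t})$, so that $X\cup Y'$ falls in the case $s'=r-1$; since $Y\subseteq Y'$ and $X\cap Y'=\emptyset$, the residual of $X\cup Y$ in $X\cup Y'$ is $Y'\setminus Y$. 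Assuming $X\cup Y'$ is arithmetically Gorenstein with the predicted (symmetric) $h$-vector, the higher--codimension analogue of the liaison formula \eqref{eq.Corollary 5.2.19} --- valid in any $\mathbb P^n$ for an ACM subscheme of a $0$-dimensional arithmetically Gorenstein scheme, see \cite[Corollary~5.2.19]{MiglioreBook} --- then yields $h_{X\cup Y}$ from $h_{X\cup Y'}$ and $h_{Y'\setminus Y}$. The one extra ingredient is a $\mathbb P^n$-analogue of Lemma~\ref{l. h vector difference of two star}: removing $m_{s+t},\dots,m_{s+1}$ one at a time exhibits $Y'\setminus Y$ as a disjoint union of star configurations, the one arising from index $s+k$ lying on $m_{s+k}$ and equal there to the star configuration cut by the traces of $m_1,\dots,m_{s+k-1}$; this should pin down $h_{Y'\setminus Y}$, after which $h_{X\cup Y}$ is the bookkeeping of Example~\ref{ex.3-7}.

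The heart of the problem is therefore the Gorenstein assertion for $s\in\{r-1,r\}$. I would attack it by induction on $r$, with an internal cascade $s=r-1\Rightarrow s=r$ modelled on the proof of Theorem~\ref{t.main}: the base cases are $r=s=n$ (two points) and $r=n+1,\ s=n$ ($n+2$ points of $\gamma$ in linearly general position, which are classically arithmetically Gorenstein), and in the inductive step one sets $X^{(i)}=\mathbb S(\mathcal L\setminus\{\ell_i\})$, uses that each $X^{(i)}\cup Y$ is arithmetically Gorenstein (by induction, or by an earlier case of the cascade), and invokes Bertini exactly as in Theorem~\ref{t.main} to build hypersurfaces of the expected degrees through $X\cup Y$ and to control $\dim(I_{X\cup Y})_d$ in the relevant degrees. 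Here is the main obstacle: for $n\ge 3$ the scheme $X\cup Y$ is Gorenstein but generally \emph{not} a complete intersection --- already $n=3$, $r=s=5$ gives the symmetric $h$-vector $(1,3,6,6,3,1)$, which is not the $h$-vector of any complete intersection --- so one cannot conclude by the B\'ezout/length count used in Theorem~\ref{t.main}, and the Gorenstein socle must be produced by hand. Concretely, once the $h$-vector is known to be the symmetric vector of the conjecture (so $h_\sigma=1$, with $\sigma=2(r-n)$ if $s=r-1$ and $\sigma=2(r-n)+1$ if $s=r$), it suffices to verify for the reduced set $X\cup Y$ the Cayley--Bacharach property in degree $\sigma-1$, i.e.\ that deleting any single point does not lower the Hilbert function in degree $\sigma-1$; this is the step that must genuinely exploit the hypothesis that all hyperplanes osculate one and the same rational normal curve, and I expect it to be the hard part.

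A more computational alternative for the Gorenstein cases is to pass to the coordinates of Theorem~\ref{t. hadamard construction}: taking $\gamma=\ell^{\circ n}$ realises $X$ and $Y$ as square-free Hadamard $n$-th powers of finite subsets of a line --- equivalently, as the sets of coefficient vectors of the monic degree-$n$ polynomials $\prod_{i\in I}(t-\lambda_i)$ and $\prod_{j\in J}(t-\mu_j)$, with $I,J$ ranging over the $n$-subsets of the two parameter sets --- and then either to exhibit a binary form of degree $\sigma$ whose Macaulay inverse system is $I_{X\cup Y}$ (which is precisely arithmetic Gorenstein-ness), with the natural candidates being symmetric functions of the $\lambda_i$ and $\mu_j$, or to assemble a self-dual minimal free resolution of $I_{X\cup Y}$ from those of $I_X$, $I_Y$ and $I_X+I_Y$ via the Mayer--Vietoris sequence $0\to I_{X\cup Y}\to I_X\oplus I_Y\to I_X+I_Y\to 0$ (whose vanishing in high degree already forces $H_{X\cup Y}(t)=\binom r n+\binom s n$ for $t\gg 0$, reducing the whole computation to understanding $I_X+I_Y$). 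With the Gorenstein cases established, the ``in particular'' clause is immediate, and the general $h$-vector statement follows by the linkage reduction described above.
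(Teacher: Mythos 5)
This statement is Conjecture~\ref{conj rnc}: the paper itself offers no proof of it, so there is nothing to compare your argument against --- the only question is whether what you wrote closes the conjecture, and it does not. You are explicit that it is ``a strategy towards'' the statement, and as a plan it is sensible: the reduction from general $s<r-1$ to the symmetric cases $s\in\{r,r-1\}$ by adjoining osculating hyperplanes and linking inside $X\cup Y'$ is exactly the mechanism of Theorem~\ref{t. h-vector 2 star }, your identification of the residual $Y'\setminus Y$ as a union of star configurations cut on the added hyperplanes is combinatorially right, and your observation that the complete-intersection argument of Theorem~\ref{t.main} cannot survive to $n\ge 3$ (e.g.\ $(1,3,6,6,3,1)$ for $n=3$, $r=s=5$ is not a complete-intersection $h$-vector) correctly isolates why the planar proof does not lift. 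The socle degrees $\sigma=2(r-n)$ and $2(r-n)+1$ are also computed correctly.

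But every load-bearing step is left open. First, the reduction is conditional on the case $s=r-1$ being arithmetically Gorenstein with the predicted $h$-vector, which is precisely the conjecture in its hardest form; the liaison formula \eqref{eq.Corollary 5.2.19} gives you nothing until that anchor case is established. Second, even granting it, the $\mathbb P^n$-analogue of Lemma~\ref{l. h vector difference of two star} is asserted (``this should pin down $h_{Y'\setminus Y}$'') rather than proved: in $\mathbb P^2$ that lemma rests on exhibiting a curve of degree $t$ through $X\setminus Y$ plus a length count, and the corresponding upper and lower bounds on $h_{Y'\setminus Y}$ in higher codimension are not automatic. Third, and most seriously, the Cayley--Bacharach verification that you correctly identify as the crux --- the step that must actually use the hypothesis that all hyperplanes osculate a single rational normal curve --- is the one you do not attempt, and without it neither the Gorenstein claim nor even the symmetric $h$-vector is established for any $n\ge 3$. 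The inverse-system and Mayer--Vietoris alternatives in your last paragraph are likewise candidate approaches, not arguments. In short: this is a well-informed research outline whose central difficulty coincides with the reason the statement is still a conjecture in the paper; it should not be presented as a proof.
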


If Conjecture \ref{conj rnc} is true then, it is possible to generalize Remark \ref{r. c i had} in order to construct Gorenstein sets of rational points in $\mathbb P^n$ with a special $h$-vector. We show the procedure in the following example.

\begin{example}
	Let $S=\mathbb C[x,y,z,t]=\mathbb C[\mathbb P^3]$ and
	let $\ell\subseteq \mathbb P^3$ be the line in $\mathbb P^3$ defined by the ideal $(z-x-y, t-x+y).$
	Consider two sets of four points on $\ell$
	$$\begin{array}{rcl}
	X&=&\{[1, 1, 2, 0], [1, 2, 3, 1], [1, 3, 4, 2], [1, 4, 5, 3]\},\\
	Y&=&\{[1, -1, 0, -2], [1, -2, -1, -3], [1, -3, -2, -4], [1, -4, -3, -5]\}.\\
	\end{array}$$
	
	Then
	$$\begin{array}{rcl}
	\mathbb{X}^{^{\underline{\star} 3}}\cup \mathbb{Y}^{^{\underline{\star} 3}}&=&\{[1, 6, 24, 0], [1, 8, 30, 0], [1, 12, 40, 0], [1, 24, 60, 6]\} \cup \\
	&& \{[1, -6, 0, -24], [1, -8, 0, -30], [1, -12, 0, -40], [1, -24, -6, -60]\}.\\
	\end{array}$$
	
	According to CoCoA, the set of eight points ${\mathbb{X}}^{^{\underline{\star} 3}}\cup \mathbb{Y}^{^{\underline{\star} 3}}\subseteq \mathbb P^3$ is in fact Gorenstein  and its $h$-vector is $(1,3,3,1).$
\end{example}

On the other hand, it is interesting to ask if contact star configurations need to be constructed on rational normal curves. Of course, one can extend the definition by taking, for instance, high contact linear spaces to some other irreducible curve or surface and, with some assumptions of generality, get again a star configuration. 
However, we don't know if this construction on a variety of different kind will lead to configurations with special properties either from the point of view of the $h$-vector or something else.  
So, we ask if the vice versa of Conjecture \ref{conj rnc} is also true. 

\begin{question}
	Let  $X:=\mathbb{S}(\ell_1,\ldots, \ell_r)$ and $Y:=\mathbb{S}(m_1,\ldots, m_s),$ where $r\ge s$, be two star configurations in $\mathbb P^n$ defined by distinct hyperplanes.	Suppose that the $h$-vector of $X\cup Y$ is $$\left(1, \binom{n}{n-1}, \ldots, \binom{r}{n-1}, \binom{s}{n-1}, \ldots, \binom{n}{n-1},1\right).$$
	Then, are $X$ and $Y$ two contact star configurations on a same rational normal curve?
\end{question}

A similar question can be asked in the case of Gorenstein set of points.
\begin{question}\label{q. rnc 2}
	Let  $X:=\mathbb{S}(\ell_1,\ldots, \ell_r)$ and $Y:=\mathbb{S}(m_1,\ldots, m_s),$ where $r\ge s$, be two star configurations in $\mathbb P^n$ defined by distinct hyperplanes. Suppose that $X\cup Y$ is a Gorenstein set of points in $\mathbb P^n$. 
	Then, are $X$ and $Y$ two contact star configurations on a same rational normal curve and either $s=r$ or $s=r-1$?
\end{question}

In the next proposition we positively answer to Question \ref{q. rnc 2} in $\mathbb P^2$ for the case $r=s=3.$
	\begin{proposition}\label{p. viceversa r=s=3}
		Let $X$ and $Y$ be two star configurations, both defined by $3$ distinct lines. Let $X\cup Y$ be a complete intersection of type $(2,3)$. Then $X$ and $Y$ are contact star configurations on the same conic  $\gamma$.
	\end{proposition}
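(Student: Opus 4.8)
The plan is to reverse-engineer the configuration from the complete intersection data. We are given two star configurations $X = \mathbb{S}(\ell_1,\ell_2,\ell_3)$ and $Y = \mathbb{S}(m_1,m_2,m_3)$, each consisting of $3$ points, and the hypothesis that $X \cup Y$ is a complete intersection of type $(2,3)$. So there is a (unique, since $\dim(I_{X\cup Y})_2 = 1$) conic $\gamma$ passing through all six points, and it is cut out together with a cubic. First I would argue that $\gamma$ is irreducible: if $\gamma$ were a union of two lines, then one of the two lines would contain at least three of the six points; but three of the points of $X$ (say) are in linear general position since they are the pairwise intersections of three lines meeting properly, and one checks the same is impossible for a mixed triple using the complete-intersection $h$-vector $(1,2,3,3,2,1)$ — in particular no $4$ of the points are collinear, and a careful count rules out $\gamma$ being reducible. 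So $\gamma$ is a smooth conic.

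Next I would show that each $\ell_i$ is tangent to $\gamma$, and similarly each $m_j$. The key observation is that $\ell_i$ contains exactly two of the three points of $X$, say $P_{ij}$ and $P_{ik}$, and contains no point of $Y$ (since $X$ and $Y$ are disjoint). The cubic $C$ in the complete intersection passes through all six points. Consider the residual: $\gamma \cap \ell_i$ consists of (at most) two points, and both $P_{ij}, P_{ik}$ lie on it, so $\gamma \cap \ell_i = \{P_{ij}, P_{ik}\}$ as schemes unless $\ell_i$ is tangent. The strategy here is to use liaison within the cubic $C$: the line $\ell_i$ is not a component of $C$ (else removing it leaves a conic through the four points $P_{jk}$ together with all of $Y$, i.e. through $5$ points of $X\cup Y$, forcing that conic to be $\gamma$ by Lemma~\ref{l. case 3-3}(ii), contradiction since $\ell_i \cup \gamma$ would then be $C$ but $\ell_i$ misses a point of $X$). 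So $\ell_i \cap C$ is $3$ reduced points, two of which are $P_{ij}, P_{ik}$; call the third $R_i$. Then $R_i$ lies on $\gamma$ (by the complete intersection, $\gamma \cap \ell_i \subseteq C \cap \ell_i$ — more precisely $\gamma$ and $C$ restricted to $\ell_i$ must agree on the scheme structure), and since $\gamma \cap \ell_i$ has degree $2$ but already contains $P_{ij}, P_{ik}$, we must have $R_i \in \{P_{ij}, P_{ik}\}$, i.e. $\ell_i$ meets $\gamma$ at one of these points with multiplicity $2$: $\ell_i$ is tangent to $\gamma$.

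Once every $\ell_i$ and every $m_j$ is tangent to the smooth conic $\gamma$, the points $P_{jk} = \ell_j \cap \ell_k$ are exactly the intersections of tangent lines, so by Definition~\ref{d.contact star} (in the form used throughout Section~\ref{s. 2 contact}, tangent lines to an irreducible conic are precisely osculating hyperplanes to a rational normal curve in $\mathbb{P}^2$), $X = \mathbb{S}(\ell_1,\ell_2,\ell_3)$ is a contact star configuration on $\gamma$, and likewise $Y$. This completes the proof. The main obstacle I anticipate is the tangency step: one must be careful that $\ell_i$ is genuinely not a component of the cubic and that the scheme-theoretic intersection $\gamma \cap \ell_i$ is forced to be non-reduced; this is where ruling out reducibility of $\gamma$ and using the precise $h$-vector $(1,2,3,3,2,1)$ (equivalently, that no four of the six points are collinear and no five lie on a reducible conic other than $\gamma$ itself) does the real work.
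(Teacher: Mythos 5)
There is a fundamental gap in your tangency step: you set out to prove that each line $\ell_i$ is tangent to the conic $\gamma$ appearing in the complete intersection, i.e.\ to the unique conic through the six points. That is impossible. The line $\ell_i$ contains the two \emph{distinct} points $P_{ij},P_{ik}$ of that conic, so it is a secant of it and can never be tangent to it (a tangent line meets a smooth conic in a single point of multiplicity two). The conic of the conclusion is a \emph{different} conic, one that does not pass through the six points at all: think of two triangles circumscribed about a circle --- the six vertices lie on some conic, but the sides are tangent to the circle and are secants of the conic through the vertices. So you have conflated the two conics, and the step cannot be repaired as designed. The ``scheme structure'' argument offered for it is also invalid on its own terms: from $\gamma\cap C=X\cup Y$ one only learns that the \emph{common} zeros of $\gamma|_{\ell_i}$ and $C|_{\ell_i}$ are among $\{P_{ij},P_{ik}\}$; nothing forces the third zero $R_i$ of the degree-three form $C|_{\ell_i}$ to lie on $\gamma$, and the two restrictions certainly do not ``agree as schemes'' since they have different degrees. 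The idea you are missing is duality, which is how the paper proceeds: the six points on the (smooth) conic $\mathfrak c$ dualize to six tangent lines $p_{ij},q_{ij}$ of $\mathfrak c^{\vee}$ whose pairwise intersections are the points $L_i,M_j$ dual to the lines $\ell_i,m_j$; hence $\{L_1,L_2,L_3\}$ and $\{M_1,M_2,M_3\}$ are contact star configurations on $\mathfrak c^{\vee}$, Theorem \ref{t.main}(e) applied in the dual plane produces a conic $\gamma^{\vee}$ through $L_1,\dots,M_3$, and dualizing back makes all six original lines tangent to $\gamma$.

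Two further problems. First, the $h$-vector of a complete intersection of type $(2,3)$ is $(1,2,2,1)$, not $(1,2,3,3,2,1)$ (that is type $(3,4)$). Second, your ``careful count'' does not in fact rule out a reducible conic: the distribution with two points of $X$ and one of $Y$ on one line, and one of $X$ and two of $Y$ on the other, is not excluded. Indeed, if one places $m_1\cap m_2$ on $\ell_1$ and $\ell_2\cap\ell_3$ on $m_3$, then $X$ and $Y$ are genuine star configurations whose union consists of three points on $\ell_1$ and three on $m_3$; such a set is a complete intersection of type $(2,3)$ whose unique conic is the reducible conic $\ell_1\cup m_3$, and the six lines are then tangent to no smooth conic (three of them are concurrent). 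So the irreducibility of the conic through the six points is a genuine extra hypothesis --- one that the paper's duality argument also uses silently, since $\mathfrak c^{\vee}$ only makes sense for smooth $\mathfrak c$ --- and it cannot be derived from the stated assumptions.
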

	\begin{proof} Let denote $X=\mathbb S(\ell_1, \ell_2,\ell_3)$ and $Y=\mathbb S(m_1, m_2,m_3)$. Set $P_{ij}:=\ell_i\cap \ell_j$ and $Q_{ij}:=m_i\cap m_j$. 		
		Let denote by $p_{ij}$ and $q_{ij}$ the lines dual to $P_{ij}$ and $Q_{ij}$ and by 
		$L_i$ and $M_j$ the points dual to the lines $\ell_i$ and $m_j$.
		By hypothesis  there is a conic $\mathfrak c$ passing thorough the six points in $X\cup Y$.
		Then, the lines $p_{ij}$ and $q_{ij}$ are tangent to the conic $\mathfrak c^{\vee}$ dual to $\mathfrak c$. Then $\{L_1, L_2, L_3 \}$ and $\{M_1, M_2, M_3 \}$ are $\mathfrak c^{\vee}$-contact star configurations. 
		Hence, from Theorem \ref{t.main} (e), there is a conic $\gamma^{\vee}$ passing through 	$\{L_1, L_2, L_3, M_1, M_2, M_3 \}$.  This proves that $X$ and $Y$ are contact star configurations on a conic $\gamma$, that is, the conic dual to $\gamma^{\vee}.$
	\end{proof}

%
%


\end{document}